\newcommand{\phinew}{\phi^{n+1}}
\newcommand{\phiold}{\phi^n}
\newcommand{\Dtphi}{\frac{\phinew-\phiold}{\Delta t}}
\newcommand{\munew}{\mu^{n+1}}
\newcommand{\rnew}{r^{n+1}}
\newcommand{\rold}{r^n}
\newcommand{\Pold}{P^n}
\newcommand{\EnewIEF}{\hat{E}^{n+1}}
\newcommand{\EoldIEF}{\hat{E}^n}
\newcommand{\gnew}{g^{n+1}}
\newcommand{\gold}{g^n}
\newtheorem{lemma}{Lemma}[section]
\newtheorem{remark}[lemma]{Remark}
\newtheorem*{maintheorem*}{Main Theorem}
\theoremstyle{definition}{}
\theoremstyle{note}{
\newtheorem*{claim*}{Claim}}
\newtheorem{thm}[lemma]{Theorem}
\newtheorem{rem}[lemma]{Remark}
\newtheorem{cor}[lemma]{Corollary}
\numberwithin{equation}{section}
\title[Invariant Energy Convexication and Invariant Energy Functionalization]{Two novel numerical methods for gradient flows: generalizations of the Invariant Energy Quadratization method}
\date{\today}
\author[Y. Yue]{Yukun Yue}
\address[Yukun Yue]{\newline Department of Mathematical Sciences \newline Carnegie Mellon University \newline 5000 Forbes Avenue, Pittsburgh, PA 15213, USA.}
\email[]{yukuny@andrew.cmu.edu}
\thanks{Y.Y.'s work was supported in part by NSF award DMS-1912854}
\begin{document}
 \pagenumbering{arabic}
\maketitle
\begin{abstract}
  In this paper, we conduct an in-depth investigation of the structural intricacies inherent to the Invariant Energy Quadratization (IEQ) method as applied to gradient flows, and we dissect the mechanisms that enable this method to uphold linearity and the conservation of energy simultaneously. Building upon this foundation, we propose two methods: Invariant Energy Convexification and Invariant Energy Functionalization. These approaches can be perceived as natural extensions of the IEQ method. Employing our novel approaches, we reformulate the system connected to gradient flow, construct a semi-discretized numerical scheme, and obtain a commensurate modified energy dissipation law for both proposed methods. Finally, to underscore their practical utility, we provide numerical evidence demonstrating these methods' accuracy, stability, and effectiveness when applied to both Allen-Cahn and Cahn-Hilliard equations. 
\end{abstract}
\section{Introduction}

Gradient flows are a class of partial differential equations (PDEs) that arise in various scientific and engineering fields, such as fluid dynamics, materials science, and optimization, as demonstrated by \cite{AndersonMcFaddenWheeler, Cahn-Hilliard, christlieb2014high, ding2022overparameterization, ericksen1976equilibrium, LiLiu2004, liu2000approximation, vardoulakis1991gradient}. These equations model the evolution of a given quantity under the influence of a driving force, which is derived from an energy functional. The study of gradient flows has attracted considerable attention in recent years due to their wide applicability and inherent mathematical structures. In this paper, we present two innovative numerical methods for gradient flows that extend the Invariant Energy Quadratization (IEQ) method, which is recently introduced. These methods offer a generalization of the pre-existing method, leveraging the mathematical structures possessed by the method while preserving the favorable properties, such as the energy-stable property and the ability to construct efficient linear schemes to solve the problems. 

To illustrate, let's consider a free energy functional $E(\phi)=\int_\Omega \rho\left(\phi(x)\right)\,dx$, where $\rho\left(\phi(x)\right)=\left[\frac{1}{2}\lvert\nabla\phi\rvert^2+F(\phi)\right]$ serves as the energy density function of $E(\phi)$. The corresponding gradient flow can then be formulated as
\begin{subequations}
\label{eq:gradient_flow_system}
\begin{equation}\label{eq:time_derivative_gradient_flow}
\frac{\partial \phi}{\partial t}=\mathcal{G} \mu,
\end{equation}
\begin{equation}\label{eq:H^-1_gradient}
\mu = \frac{\delta E}{\delta\phi}=-\Delta \phi+\frac{\delta F}{\delta \phi}:=-\Delta \phi+f(\phi),
\end{equation}
\end{subequations}
with $\phi(0)=\phi_0$ to be its initial condition. In this context, $\mathcal{G}=I$ signifies the identity operator when we are considering a gradient flow in $L^2$, and $\mathcal{G}=\Delta$, which denotes the Laplacian, when we are investigating a gradient flow in $H^{-1}$ \cite{ambrosio2005gradient, mielke2011weighted}.

 Gradient flows have a rich mathematical structure and are related to several important concepts in mathematics, such as the Wasserstein distance, optimal transport, and convex optimization. There has been a surge of interest in unearthing its mathematical properties, with notable contributions made by \cite{chizat2018global, cortes2006finite, maas2011gradient, santambrogio2017euclidean}. Concurrently, researchers have been striving to establish various numerical methods to resolve gradient flow problems. These efforts have given rise to techniques such as the convex-splitting method \cite{baskaran2013convergence,glasner2016improving, shen2012second, shin2017unconditionally}, the Invariant Energy Quadratization (IEQ) \cite{bilbao2023explicit, guillen2019unconditionally,han2020second,  yang2017efficient, yang2017numerical,zhao2017numerical}, and the Scalar Auxiliary Variable (SAV) methods \cite{shen2018convergence, shen2018scalar, shen2019new,shen2020ieq}. These methods allow for the development of efficient schemes for solving gradient flows, and hold the energy-stable property, assuring the numerical solutions maintain certain physical and mathematical properties of the continuous problem.

The IEQ method, particularly, has gained popularity as a numerical method in recent years. It introduces an auxiliary variable whose quadratization equals the original energy density function when the energy density function has a lower bound. It is based on the idea of conserving certain invariants of the continuous problem in the discrete setting. As an illustration, one could define
\begin{equation*}
   r(\phi) = \sqrt{F(\phi)+A_1},
\end{equation*}
with $A_1$ as a constant ensuring $r$ to be well-defined. Provided that $F$ is bounded from below, an appropriate constant $A_1$ can always be found. Consequently, system \eqref{eq:gradient_flow_system} can be reformulated as
\begin{subequations}
    \label{eq:gradient_flow_reformulated_system}
    \begin{equation}
        \label{eq:phi_t_reformulated}
        \frac{\partial \phi}{\partial t} = \mathcal{G}\mu,
    \end{equation}
    \begin{equation}
        \label{eq:mu_reformulate}
        \mu = -\Delta \phi + 2rP,
    \end{equation}
    \begin{equation}
        \label{eq: r_t_reformulate}
        r_t = P\phi_t,
    \end{equation}
    \begin{equation}
        P=\frac{\frac{\delta F}{\delta \phi}}{2\sqrt{F(\phi)+A_1}}
    \end{equation}
\end{subequations}
The chain rule can be conveniently applied to the auxiliary variable $r$ to validate this reformulation. This approach has successfully facilitated the construction of linear numerical schemes for various gradient-flow type problems, including the Cahn-Hilliard equation \cite{yang2020convergence, yang2017numerical2}, the Ericksen–Leslie model \cite{chen2017second} and Beris-Edwards model for liquid crystals \cite{gudibanda2022convergence,yue2023convergence,zhao2017novel}, and the sine-Gordon equation \cite{fu2021linearly,jiang2019linearly}. For instance, a linear numerical scheme can be constructed by treating $r$ implicitly and $P$ explicitly. Specifically, we have:
\begin{subequations}
    \label{eq:IEQ_scheme}
    \begin{equation}
        \label{eq:IEQ_phi_t}
        \frac{\phinew-\phiold}{\Delta t} = \mathcal{G}\munew,
    \end{equation}
    \begin{equation}
        \label{eq:IEQ_mu_new}
        \munew=-\Delta\phinew+2\rnew\Pold,
    \end{equation}
    \begin{equation}
        \label{eq:IEQ_r_t}
        \rnew-\rold=\Pold:(\phinew-\phiold),
    \end{equation}
    \begin{equation}
        \label{eq:IEQ_P}
        \Pold=\frac{\frac{\delta F}{\delta \phi}(\phiold)}{2\sqrt{F(\phiold)+A_1}}.
    \end{equation}
\end{subequations}
The resulting numerical scheme is characterized by a modified energy dissipation law (see, for instance, \cite{zhao2017novel}). One of the notable strengths of the IEQ method is its energy-stable property, which ensures a monotonic decrease of the discrete energy functional along the trajectory of the numerical solution, akin to the behavior observed in the continuous problem. This property plays a vital role in the long-term behavior of the numerical solution and the preservation of inherent physical and mathematical structures.

The power of the IEQ method fundamentally lies in its innovative decomposition of the nonlinear term into a product of two functions, specifically emphasizing which part should be treated implicitly. This quadratization formulation aligns seamlessly with a usual form of the energy term encountered in physical processes, as evidenced by numerous scientific studies. In addition, quadratization formulation induces a linear term to appear, leading to a linear scheme. However, the energy term in various natural scientific systems may not be confined to a quadratic form. This observation prompts us to question the possibility of other formulations capable of preserving the advantageous properties of the IEQ method. In other words, we are interested in discovering other indigenous decomposition of the nonlinear term to construct energy-stable, linear numerical schemes.

In this paper, we propose two novel strategies for extending the Invariant Energy Quadratization (IEQ) method, namely the Invariant Energy Convexification (IEC) and the Invariant Energy Functionalization (IEF) approaches. Both methods are based on introducing an auxiliary variable originating from using different functions to replace the original energy density function. Moreover, they both induce a modified energy dissipation law within the reformulation of the original gradient-flow type system. Importantly, these two methods retain the advantageous ability to induce efficient linear numerical schemes that rival the efficacy of the IEQ method. A noteworthy contribution of these proposed approaches is their potential to accommodate various functions as auxiliary variables during the transformation process. This flexibility suggests the possibility of identifying an optimal form for implementing the method tailored to the specifics of a given physical system.

The organization of the remainder of this paper is as follows: In Section \ref{sec:IEC}, we introduce the Invariant Energy Convexification (IEC) method alongside a new reformulation of the system as described in equation \eqref{eq:gradient_flow_system}. We shall proceed to derive a semi-discrete numerical scheme and verify the stability of these numerical schemes under suitably modified energy. Likewise, in Section \ref{sec:IEF}, we outline the Invariant Energy Functionalization (IEF) method, present a corresponding numerical scheme, and confirm its energy stability. In Section \ref{sec:Numerics}, we substantiate the accuracy and efficacy of our proposed methods through a series of numerical experiments, providing compelling evidence of their utility.


\section{The invariant energy Convexification}\label{sec:IEC}

In this section, we introduce our first numerical method that expands upon the core principles of the IEQ method. As it has been stated in the introduction, the IEQ method has established itself as a potent and effective technique for addressing a broad class of problems. At the heart of the IEQ method is the introduction of an auxiliary variable representing the square root of the energy density function. The incorporation of this auxiliary variable enables the method to more effectively manage the non-linear terms that emerge in various problems. Specifically, the IEQ method handles a portion of the resulting non-linear term implicitly while maintaining the remainder in an explicit form. This blend of implicit and explicit treatment is a distinguishing feature of the IEQ method and contributes to its efficacy in constructing linear numerical schemes. A critical question arising from this approach is whether the decomposition of the non-linear term in the IEQ method is unique or if it can be further generalized. This will be discussed in the following.

\subsection{Reformulation with L-smooth  convex function}

To find a more general class of functions that share the advantages of the IEQ method, we need to figure out the nature of its effectiveness first. We notice that the energy-conserving property mainly relies on the convexity of the auxiliary function used to replace the original energy density function. Meanwhile, the linearity relies on the fact that the derivative of a quadratic function is a linear function. Therefore, it enlightens us to find a class of convex functions to introduce the auxiliary variable and look for a possible linear approximation of its derivative to construct the numerical scheme. To investigate this question, we propose a new formulation designed to imitate the IEQ formulation's structure and broaden its applicability while preserving its essential advantages. As a result, we discover that quadratization is not the sole viable option for variable transformation; a particular class of convex functions can also pave the way to an energy-stable linear scheme.

Specifically, let us presume the energy density function $F$ is bounded from below. We will take $c:\mathbb{R}\to\mathbb{R}$ as a smooth convex function that is monotonically increasing on a connected set $K$, and $\mathbb{R}^+$ is contained $c(K)$. We further assume that $c$ is $L-$smooth \cite{lu2018relatively}, which means that for any $x,y$ within domain of function $c$, a constant $0<L<\infty$ exists such that

\begin{equation*}
|\nabla c(x)-\nabla c(y)|\leq L\lvert x-y\rvert.
\end{equation*}
This condition implies that
\begin{equation}
\label{eq:L-smooth_property}
c(y)\leq c(x)+c'(x)(y-x)+\frac{L}{2}\lvert y-x\rvert^2.
\end{equation}
Now, consider $r:[0,T]\times\Omega\to\mathbb{R}$ to be a function solving equation $c\left(r(t,x)\right)=F\left(\phi(t,x)\right)+A_1$ for every $(t,x)\in [0,T]\times\Omega$ where $A_1$ is the constant to ensure the non-negativity of $F$. Given the invertibility of $c$ on $\mathbb{R}^+$, this definition is indeed valid. Differentiating both sides of the equation results in
\begin{equation*}
c'(r)\,\frac{\delta r}{\delta \phi}=\frac{\delta F(\phi)}{\delta \phi}=f(\phi).
\end{equation*}
Therefore, systems \eqref{eq:gradient_flow_system} will be rewritten as
\begin{subequations}
    \label{eq:IEC_formulation}
    \begin{equation}
    \label{eq:phi_update}
        \phi_t=\mathcal{G}\mu,
    \end{equation}
    \begin{equation}\label{eq:mu_update}
        \mu=-\Delta\phi +c'(r)\frac{\delta r}{\delta \phi},
    \end{equation}
    \begin{equation}\label{eq:r_update}
        r_t =\frac{\delta r}{\delta \phi }\phi_t.
    \end{equation}
\end{subequations}
Taking inner product above with $\mu, \phi_t$ and $c'(r)$ respectively, one can easily obtain a modified energy dissipation law as:
\begin{equation}
    \label{eq:modified_energy_dissipation_law_continuous}
    \frac{d}{dt}\left[\frac{1}{2}\|\nabla\phi\|^2+\int_\Omega c(r)\right]=(G\mu, \mu)\leq 0,
\end{equation}
for $\mathcal{G}=-I$ and $\mathcal{G}=\Delta$ where $\|\cdot\|$ is the $L^2$ norm. If $c'(r)$ is a linear function with respect to $r$, then we can construct a linear numerical scheme simply by treating $c'(r)$ implicitly and $\frac{\delta r}{\delta \phi}$, which is how IEQ works. However, when $c'(r)$ is not linear, treating $c'(r)$ implicitly will result in a nonlinear equation to be solved at each step which generally loses a very strong advantage of this approach. In addition, we want to point out that the convexity is not crucial in the continuous case as reformulation \eqref{eq:IEC_formulation} and the corresponding modified energy dissipation law actually can work by simply choosing $c$ as a smooth function, with the convexity condition being dropped. However, convexity will play an important role in the way of constructing linear energy-stable schemes in the discrete case. It will be the main problem that we want to solve in the next subsection.

\subsection{Numerical scheme}

Our newfound approach provides the cornerstone for devising a linear numerical scheme to resolve \eqref{eq:IEC_formulation}. We merely need a linear approximation for $c'(r)$ in the discretized scenario. With this in mind, we propose the following first-order semi-discrete scheme:

\begin{subequations}
\label{eq:IEC_scheme}
    \begin{equation}
    \label{eq:gf_numer}
        \Dtphi=\mathcal{G} \munew,
    \end{equation}
    \begin{equation}
    \label{eq:munew}
        \munew = -\Delta \phinew+\left[c'(\rold)+\alpha L(\rnew-\rold)\right]\Pold,
    \end{equation}
    \begin{equation}
    \label{eq:drdt}
        \rnew-\rold=\Pold(\phinew-\phiold),
    \end{equation}
\end{subequations}
where $P^n=\frac{\delta r}{\delta Q}(\phiold)$ and $\alpha\geq\frac{1}{2}$ is a chosen parameter. The elegance of this scheme lies in its efficient solvability at each time step. Specifically, given that $(\phiold, \rold)$ are known, substituting \eqref{eq:r_update} into \eqref{eq:mu_update} enables the replacement of $\rnew$ by the formula of $\phinew$, hence allowing the computation of $(\phinew, \rnew)$ by first finding $\phinew$ from
\begin{equation*}
    (I+\Delta t\, \mathcal{G}\Delta)\phinew-\alpha L\Delta t\,\mathcal{G} \left(\Pold\phinew\Pold\right) = \phiold+\Delta t\mathcal{G}\,\left[  c'(\rold)-\alpha L\Pold\phiold \right]\Pold,
\end{equation*}
and subsequently updating $\rnew$ by \eqref{eq:drdt}. Alternatively, the scheme can be executed by amalgamating $\munew$ in the solving process. This approach determines $(\phinew, \munew, \rnew)$ by resolving the linear system
\begin{equation}
\label{eq:IEC_scheme_implementation}
    \begin{pmatrix}
        \frac{1}{\Delta t}I & -\mathcal{G} & 0\\
        \Delta & I & -\alpha L \Tilde{P}^n\\
        -\Tilde{P}^n & 0 & I
    \end{pmatrix}
    \begin{pmatrix}
        \phinew\\ \munew \\ \rnew
    \end{pmatrix} = \begin{pmatrix}
         \frac{1}{\Delta t}\phiold\\
        c'(\rold)\Pold-\alpha L \rold\Pold\\
        \rold-P^n\phiold
    \end{pmatrix},
\end{equation}
at each time step, given that $(\phi^n, \rold)$ have been ascertained. Both implementations offer a linear pathway to updating the numerical results, thereby providing us a streamlined alternative to tackling nonlinear gradient-flow type problems, as opposed to implicitly addressing the nonlinear term and having to resolve a nonlinear equation at each juncture.
\begin{rem}
    In the second implementation approach above, we use $\Tilde{P}^n$ to denote a functional operator, a multiplication of $\Pold$ to $\rnew$ point-wisely. In the fully-discrete case, assume the space has been discretized by a $N_x\times N_y$ grid, we will treat $\phinew, \munew, \rnew$ as a $1\times (N_x*N_y)$ vector, respectively. And $\Tilde{P}^n$ will be taken as a $(N_x * N_y)\times(N_x * N_y)$ diagonal matrix by expanding the values of $\Pold$ at the $N_x*N_y$ grid points onto its diagonal elements. The same notation $\Tilde{\cdot}$ will be repeatedly used in the following to denote a point-wise multiplication operator. 
\end{rem}

Besides sustaining the linearity property, we can also immediately obtain the following modified discretized energy dissipation law from scheme \eqref{eq:IEC_scheme} .

\begin{thm}
\label{thm:IEC_energy_stability}
    The numerical scheme \eqref{eq:IEC_scheme} is energy-stable. Specifically, define 
    \begin{equation}\label{eq:modified_energy_IEC}
        E^n=\frac{1}{2}\|\nabla \phi^n\|^2+\int_\Omega c(r^n),
    \end{equation}then
    \begin{equation*}
        E^{n+1}-E^n\leq0.
    \end{equation*}
\end{thm}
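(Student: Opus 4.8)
The plan is to reproduce, at the discrete level, the derivation of the continuous modified dissipation law \eqref{eq:modified_energy_dissipation_law_continuous}, with the single new ingredient that the convexity estimate is upgraded to the quantitative $L$-smoothness bound \eqref{eq:L-smooth_property} in order to absorb the error produced by treating $\Pold$ explicitly. First I would pair \eqref{eq:gf_numer} with $\Delta t\,\munew$ in the $L^2$ inner product, which gives
\[
(\phinew-\phiold,\munew)=\Delta t\,(\mathcal{G}\munew,\munew)\le 0,
\]
the inequality being exactly the negative semi-definiteness of $\mathcal{G}$ used in the continuous case (the cases $\mathcal{G}=-I$ and $\mathcal{G}=\Delta$, the latter with periodic or homogeneous Neumann boundary conditions so that $(\Delta\munew,\munew)=-\|\nabla\munew\|^2$).

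Next I would substitute \eqref{eq:munew} into the left-hand side and split it into a diffusion part and a nonlinear part. For the diffusion part, integration by parts together with the polarization identity $(\nabla(a-b),\nabla a)=\tfrac12\|\nabla a\|^2-\tfrac12\|\nabla b\|^2+\tfrac12\|\nabla(a-b)\|^2$ yields
\[
(\phinew-\phiold,-\Delta\phinew)=\tfrac12\|\nabla\phinew\|^2-\tfrac12\|\nabla\phiold\|^2+\tfrac12\|\nabla(\phinew-\phiold)\|^2 .
\]
For the nonlinear part, the constraint \eqref{eq:drdt} turns the factor $\Pold(\phinew-\phiold)$ pointwise into $\rnew-\rold$, so
\[
\bigl(\phinew-\phiold,\,[c'(\rold)+\alpha L(\rnew-\rold)]\Pold\bigr)=\intO c'(\rold)(\rnew-\rold)+\alpha L(\rnew-\rold)^2 .
\]

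The crux is then the pointwise inequality $c'(\rold)(\rnew-\rold)+\alpha L(\rnew-\rold)^2\ge c(\rnew)-c(\rold)$. Here the $L$-smoothness hypothesis enters: applying \eqref{eq:L-smooth_property} with $x=\rold$, $y=\rnew$ gives $c(\rnew)-c(\rold)\le c'(\rold)(\rnew-\rold)+\tfrac{L}{2}(\rnew-\rold)^2$, and since $\alpha\ge\tfrac12$ the leftover $(\alpha L-\tfrac{L}{2})(\rnew-\rold)^2$ is nonnegative. Integrating over $\dom$ and assembling the three pieces gives
\[
\Delta t\,(\mathcal{G}\munew,\munew)=E^{n+1}-E^n+\tfrac12\|\nabla(\phinew-\phiold)\|^2+\intO\!\Bigl[c'(\rold)(\rnew-\rold)+\alpha L(\rnew-\rold)^2-\bigl(c(\rnew)-c(\rold)\bigr)\Bigr],
\]
and discarding the two manifestly nonnegative terms together with $\Delta t\,(\mathcal{G}\munew,\munew)\le 0$ yields $E^{n+1}-E^n\le 0$.

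I do not expect a genuine obstacle here; the only delicate point is the bookkeeping in the $L$-smoothness step — one must invoke \eqref{eq:L-smooth_property} at the particular pair $(\rold,\rnew)$ and observe that $\alpha\ge\tfrac12$ is precisely the threshold at which $\alpha L(\rnew-\rold)^2$ dominates the Taylor remainder $\tfrac{L}{2}(\rnew-\rold)^2$, so that the explicit-in-$\Pold$ error is controlled; note that convexity of $c$ is not actually needed for this inequality, only $L$-smoothness, while convexity guarantees $\intO c(r^n)\ge 0$ and hence that $E^n$ is a bona fide energy. One should also state the boundary conditions used in the integration by parts, and remark that in the fully discrete setting all integrals and gradients are replaced by their mesh analogues (e.g. $\Lap$ and $\Danyplus$), for which the same summation-by-parts and polarization identities hold verbatim.
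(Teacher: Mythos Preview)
Your proposal is correct and follows essentially the same route as the paper's proof: pair the scheme with $\Delta t\,\munew$, use the polarization identity on the diffusion term, convert $\Pold(\phinew-\phiold)$ to $\rnew-\rold$ via \eqref{eq:drdt}, and close with the $L$-smoothness bound \eqref{eq:L-smooth_property} together with $\alpha\ge\tfrac12$. One small aside: your remark that ``convexity guarantees $\intO c(r^n)\ge 0$'' is not quite right (convexity alone does not imply nonnegativity; it is the construction $c(r)=F(\phi)+A_1\ge 0$ that does), but this does not affect the dissipation argument.
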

\begin{proof}
    Taking inner product of \eqref{eq:phi_update} with $\munew\Delta t$, \eqref{eq:mu_update} with $\phinew-\phiold$, \eqref{eq:r_update} with $\left[c'(\rold)+\frac{L}{2}(\rnew-\rold)\right]$, and using property of $L$-smooth function \eqref{eq:L-smooth_property}, we obtain
    \begin{align*}
        (\mathcal{G}\munew, \munew)\Delta t&=(\phinew-\phiold, \munew)\\
        &=\left(\phinew-\phiold,  -\Delta \phinew+\left[c'(\rold)+\alpha L(\rnew-\rold)\right]\Pold\right)\\
        &=\frac{1}{2}\|\nabla \phinew\|^2-\frac{1}{2}\|\nabla \phiold\|^2+\frac{1}{2}\|\nabla\phinew-\nabla\phiold\|^2\\
        &\quad\,\,+\int_\Omega \left[c'(r^n)(\rnew-\rold)+\alpha L(\rnew-\rold)^2\right]\,dx\\
        &\geq \frac{1}{2}\|\nabla \phinew\|^2-\frac{1}{2}\|\nabla \phiold\|^2+\frac{1}{2}\|\nabla\phinew-\nabla\phiold\|^2+\int_\Omega \left[c(\rnew)-c(\rold)\right]\,dx\\
        &\geq E^{n+1}-E^n.
    \end{align*}
    Using the fact that $(\mathcal{G}\munew, \munew)\leq 0$ independent of choice of $\munew$, we have finished the proof.
\end{proof}
\begin{remark}
    From the deduction, we can see that L-smoothness of $c(x)$ plays a key role in keeping the energy dissipated rule. Specifically, if we modify $c(x)$ to be a concave function, then simply taking $L=0$ is enough to ensure the inequalities appeared in the proof to hold and it will also result in a linear energy-stable numerical scheme. 
\end{remark}

Following from this theorem, we can immediately obtain the following estimate holds for $\munew$:
\begin{cor}
\label{cor:mu_estimate_IEC}
    Assume $E^0$ is bounded. For fixed $N>0$, we have
    \begin{equation*}
       0\leq -\sum_{n=0}^N (\mathcal{G}\mu^n, \mu^n)\Delta t\leq -E^N+E^0\leq E^0,
    \end{equation*}
    and so it is bounded. Specifically, if $\mathcal{G}=\Delta$, we have 
    \begin{equation*}
       0\leq \sum_{n=0}^N \|\nabla\mu^n\|^2\Delta t\leq E^0,
    \end{equation*}
    if $\mathcal{G}=-I$,  we have 
    \begin{equation*}
       0\leq \sum_{n=0}^N \|\mu^n\|^2\Delta t\leq E^0.
    \end{equation*}
\end{cor}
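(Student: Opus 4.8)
The plan is to telescope the per-step energy inequality from Theorem~\ref{thm:IEC_energy_stability}. First I would record that theorem in the cumulative form: since $E^{n+1}-E^n \leq (\mathcal{G}\mu^{n+1},\mu^{n+1})\Delta t$ (reading off the chain of (in)equalities in the proof of Theorem~\ref{thm:IEC_energy_stability}, where in fact equality holds between $(\mathcal{G}\mu^{n+1},\mu^{n+1})\Delta t$ and the intermediate quantity that dominates $E^{n+1}-E^n$), summing over $n=0,\dots,N$ gives
\begin{equation*}
    E^{N+1}-E^0 \leq \sum_{n=0}^{N}(\mathcal{G}\mu^{n+1},\mu^{n+1})\Delta t \leq 0.
\end{equation*}
Rearranging yields $-\sum_{n=0}^{N}(\mathcal{G}\mu^{n+1},\mu^{n+1})\Delta t \leq E^0 - E^{N+1}$. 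A small bookkeeping point: the corollary indexes the sum from $n=0$ to $N$ over $\mu^n$ rather than $\mu^{n+1}$; this is just a shift of summation index and affects nothing, so I would either absorb it silently or note that the scheme produces $\mu^{1},\dots,\mu^{N+1}$ and re-label.

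Next I would argue nonnegativity of the left-hand side and discard the $-E^{N+1}$ term. Nonnegativity follows because $(\mathcal{G}\mu,\mu)\leq 0$ for each admissible $\mathcal{G}$: for $\mathcal{G}=-I$ this is $-\|\mu\|^2\leq 0$, and for $\mathcal{G}=\Delta$ it is $(\Delta\mu,\mu) = -\|\nabla\mu\|^2\leq 0$ after integration by parts (using periodic or homogeneous Neumann boundary conditions, consistent with the energy identity \eqref{eq:modified_energy_dissipation_law_continuous} used throughout). Hence $-\sum(\mathcal{G}\mu^n,\mu^n)\Delta t \geq 0$. To drop $-E^{N+1}$ and conclude $-\sum(\mathcal{G}\mu^n,\mu^n)\Delta t \leq E^0$, I need $E^{N+1}\geq 0$; this is immediate from the definition \eqref{eq:modified_energy_IEC}, since $\tfrac12\|\nabla\phi^{N+1}\|^2\geq 0$ and $\int_\Omega c(r^{N+1})\geq 0$ because $c(r^{N+1}) = F(\phi^{N+1})+A_1 \geq 0$ by the choice of $A_1$ (the same fact that makes $r$ well-defined in the first place). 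Substituting the two concrete forms of $(\mathcal{G}\mu,\mu)$ then gives the two displayed bounds $\sum\|\nabla\mu^n\|^2\Delta t \leq E^0$ and $\sum\|\mu^n\|^2\Delta t\leq E^0$.

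I do not anticipate a genuine obstacle here — the corollary is a direct consequence of the monotonicity already proven. The only point requiring a little care is the nonnegativity of the modified energy $E^{N+1}$, which is what licenses replacing $E^0-E^{N+1}$ by $E^0$; I would make sure to invoke explicitly that $c(r) = F(\phi)+A_1\geq 0$ rather than leaving it implicit, since without that observation one only gets the weaker bound $-E^N+E^0$ (which the statement also records). Everything else is telescoping and sign-checking.
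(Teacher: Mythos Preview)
Your telescoping argument is exactly what the paper has in mind (the paper gives no proof, simply stating that the corollary follows immediately from Theorem~\ref{thm:IEC_energy_stability}), and you correctly identify the one nontrivial point: why $E^{N}\geq 0$, so that $-E^N+E^0\leq E^0$.

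However, your justification of that point contains an error. You write that $c(r^{N+1})=F(\phi^{N+1})+A_1\geq 0$ ``by the choice of $A_1$.'' The equality $c(r)=F(\phi)+A_1$ holds only at the continuous level and at initialization; in the discrete scheme \eqref{eq:IEC_scheme} the auxiliary variable is advanced via the linearized update \eqref{eq:drdt}, not by re-solving $c(r^{n+1})=F(\phi^{n+1})+A_1$. So in general $c(r^{n})\neq F(\phi^{n})+A_1$ for $n\geq 1$, and your argument does not establish $\int_\Omega c(r^{N})\geq 0$. What actually makes $E^N\geq 0$ work is that the admissible choices of $c$ in the paper (Softplus, $r^2$, $\ln(r)^2$) are themselves nonnegative functions, so $c(r^N)\geq 0$ regardless of whether the discrete $r^N$ still satisfies the constitutive relation. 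You should either invoke $c\geq 0$ as a standing assumption (it is implicit in the paper's examples and in the requirement that $c$ represent a shifted nonnegative energy density), or else stop at the weaker bound $-E^N+E^0$, which you already have cleanly.
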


\subsection{An example of the IEC scheme}

Having laid the groundwork for the standard formulation of the IEC approach and its corresponding numerical scheme's general form, we now face the necessity of an exact form of the chosen L-smooth convex function for concrete computational applications. As we conclude this section, we propose a viable candidate for such a function, demonstrating that it is indeed possible to construct an IEC scheme using a function that satisfies these criteria. Our goal is to identify an option that possesses the properties necessary to induce a valid IEC approach and is also conveniently calculable, thus facilitating our numerical implementation.

Before delving into that, it is worth mentioning that the IEQ approach can be considered a specific instance of the IEC formulation. Indeed, the quadratic function is an $L$-smooth convex function with $L=2$. Thus, by selecting $\alpha=1$ in \eqref{eq:IEC_scheme}, the resulting numerical scheme aligns perfectly with the scheme induced by the IEQ formulation. This observation serves to justify our perspective that the IEC method is a natural generalization of the IEQ method.

As an alternative to the quadratic function, we turn our attention to the Softplus function \cite{szandala2021review}. Defining $c(r) = \ln(1+e^r)$, we quickly find that the first-order derivative, $c'(r)=\frac{e^r}{1+e^r}$, and the second-order derivative, $c''(r)=-\frac{e^r}{(1+e^r)^2}$, are uniformly bounded. Consequently, $c(r) = \ln(1+e^r)$ establishes itself as a non-negative, monotonically increasing, L-smooth convex function with $L=\frac{1}{4}$. Taking $c(r)=F(\phi)+A_1$, this leads to
\begin{equation}
    \label{eq:r_softplus}
    r(\phi) = \ln (e^{F(\phi)+A_1}-1),
\end{equation}
and subsequently to $\Pold=P(\phiold)$ where
\begin{equation}
    \label{eq:P_softplus}
    P(\phi)=\frac{\delta r}{\delta \phi}(\phi)=\frac{e^{F(\phi)+A_1}}{e^{F(\phi)+A_1}-1}f(\phi).
\end{equation}
These equations provide an illustrative example of \eqref{eq:IEC_scheme}, demonstrating the applicability of the Softplus function in this context. We can conclude this procedure in the following Algorithm \ref{alg:IEC_softplus}.

\begin{algorithm}
\caption{IEC scheme induced by Softplus function} \label{alg:IEC_softplus}
\begin{algorithmic}[1] 
\Procedure{}{}
    \State Set the step size $\Delta t$
    \State Set the total number of iterations $N$
    \State Set the value for $\alpha$
    \State Set the initial value $\phi^0 = \phi_0$
    \State Compute $r^0 = r(\phi^0)$ using equation \eqref{eq:r_softplus}
    \State Compute $P^0 = P(\phi^0)$ using equation \eqref{eq:P_softplus}
    
    \For {$n = 0, 1, 2, ..., N-1$}
        \State Construct the matrix $A=\begin{pmatrix}
        \frac{1}{\Delta t}I & -\mathcal{G} & 0\\
        \Delta & I & -\frac{\alpha}{4}\Tilde{P}^n\\
        -\Tilde{P}^n & 0 & I
    \end{pmatrix}$ using $P^n$
        \State Construct the right-hand side $b=\begin{pmatrix}
         \frac{1}{\Delta t}\phiold\\
        \frac{e^{\rold}}{1+e^{\rold}}\Pold-\frac{\alpha}{4}\rold\Pold\\
        \rold-P^n\phiold
    \end{pmatrix}$
        \State Update $\phi^{n+1}$, $r^{n+1}$ by solving the linear system $A\begin{pmatrix}
            \phi^{n+1}\\\mu^{n+1}\\ r^{n+1}
        \end{pmatrix} = b$
        \State Update $P^{n+1} = P(\phi^{n+1})$ using equation \eqref{eq:P_softplus}
    \EndFor
\EndProcedure
\end{algorithmic}
\end{algorithm}

The flexibility in selecting the function $c$ is relatively wide as long as it conforms to the previously defined prerequisites that ensure a valid numerical scheme. In practice, this adjustment necessitates an alteration of the formulas for $r$ and $P$ and an adjustment of the expression for $c'(r)$ used in computing the right-hand side vector $b$ in the preceding algorithm. An important observation is that the second derivative of the Softplus function is uniformly bounded, thereby ensuring it is L-smooth uniformly across the full range of real numbers. Additionally, we may consider alternative convex functions like the exponential function $e^r$ and polynomials $r^k$ where $k\geq 3$, as potential replacements for the quadratic function, given that some other specific assumptions are met. Specifically, the exponential and polynomial functions both possess the local $L-$smoothness. Hence, if the range of $r$ is bounded, a sufficiently large $L$ can be identified to make these functions locally $L-$smooth and facilitate the implementation of corresponding numerical schemes. In Section \ref{sec:Numerics}, we will display some numerical evidence supporting these formulations' applicability.

\section{Invariant Energy Functionalization}\label{sec:IEF}

As previously established, the two main benefits of the IEQ method are its linearity and energy-dissipation preservation. The conservation of energy, particularly in a discrete case, is primarily ensured through the convexity of the auxiliary function, which replaces the original energy density function. The IEC formulation can be seen as an evolution of this concept, placing the preservation of convexity at its core.
In contrast, the linear approximation employed in \eqref{eq:IEC_scheme} serves as a mechanism to design a linear scheme following the assurance of convexity. It reveals a fundamental tenet of the IEC scheme: the priority of maintaining convexity over linearity. 

Nevertheless, it is the linearity that brings efficiency to the IEQ method. If we shift our focus towards preserving linearity, it becomes evident that this attribute is primarily derived from the auxiliary variable's linearity with respect to the auxiliary function's derivative. This understanding guides us to consider decomposing the auxiliary function $c(r)$ into a product of the auxiliary variable $r$ and $\frac{c(r)}{r}$. Here we can drop the convexity assumption of $c(r)$ and simply take it as a smooth function. If we consider $\frac{c(r)}{r}$ as an independent auxiliary variable, denoted as $g(r)$, then the derivative of $rg(r)$ ends up being linear concerning the auxiliary variable $r$. This idea provides an alternative means of replacing the original energy density function to achieve a linear scheme but emphasizes preserving linearity more than convexity. We introduce this method in the following section, referring to it as the "Invariant Energy Functionalization" method. This name stems from integrating the auxiliary variable $r$ with a specific function $g$ that fulfills some conditions.

\subsection{Model reformulation}

Let us consider $g$ to be a smooth function on $\mathbb{R}$. Analogous to the hypothesis in the IEC formulation, we assume that $s(r):=rg(r)$ is invertible over a connected set $K$ with $\mathbb{R}^+\subset s(K)$. We define $r:[0,T]\times\Omega\to K$ to be a function that resolves the equation $r(t,x)g\left(r(t,x)\right)=F\left(\phi(t,x)\right)+A_1$ for all $(t,x)\in [0,T]\times\Omega$, where $A$ is a constant employed to ascertain positivity of $F$. Utilizing the chain rule, we can then derive the following:
\begin{equation*}
    \left[rg'(r)+g(r)\right]\frac{\delta r}{\delta \phi}=f(\phi).
\end{equation*}
Thus, we can rewrite gradient-flow system \eqref{eq:gradient_flow_system} as
\begin{subequations}
    \label{eq:IEF_formulation}
    \begin{equation}
    \label{eq:phi_update_IEF}
        \phi_t=\mathcal{G}\mu,
    \end{equation}
    \begin{equation}\label{eq:mu_update_IEF}
        \mu=-\Delta\phi +\left[rg'(r)+g(r)\right]\frac{\delta r}{\delta \phi},
    \end{equation}
    \begin{equation}\label{eq:r_update_IEF}
        r_t =\frac{\delta r}{\delta \phi }\phi_t.
    \end{equation}
    \begin{equation}
        \label{eq:g_update_IEF}
        g_t = g'(r)\,r_t
    \end{equation}
\end{subequations}
Given the equations above, we can deduce a modified energy dissipation law by computing the inner product of \eqref{eq:IEF_formulation} with $\mu$, $\phi_t$, $\left[rg'(r)+g(r)\right]$, and $r$ respectively, and then summing the results. This leads us to
\begin{equation}
    \label{eq:IEF_modified_energy}
     \frac{d}{dt}\left[\frac{1}{2}\|\nabla\phi\|^2+\int_\Omega rg(r)\right]=(G\mu, \mu)\leq 0,
\end{equation}
which holds for both $\mathcal{G}=-I$ and $\mathcal{G}=\Delta$. It's worth pointing out that one of the significant advantages of this formulation is that the auxiliary variable $r$ appears in a linear form in the reformulation of $f(\phi)$. This linear structure facilitates the development and implementation of a linear scheme to solve the equation, a topic that we will discuss more in the upcoming subsection.

\subsection{Numerical scheme}
Motivated by the formulation proposed above, a natural next step is introducing an additional variable to represent $g$ within the corresponding numerical scheme. In the continuous formulation, $g=g(r)$ is a dependent function of $r$. Thus, once $r$ is established, the value of $g$ is subsequently determined. Nonetheless, to preserve the linearity of the numerical scheme, we propose the introduction of a discrete variable for $g$, and update its value through an explicit discretization. This idea leads to the following Invariant Energy Functionalization (IEF) numerical scheme:

\begin{subequations}
\label{eq:IEF_scheme}
    \begin{equation}
    \label{eq:gf_numer_IEF}
        \Dtphi=\mathcal{G} \munew,
    \end{equation}
    \begin{equation}
    \label{eq:munew_IEF}
        \munew = -\Delta \phinew+\left[\rnew g'(r^n)+g^{n+1}\right]\Pold,
    \end{equation}
    \begin{equation}
    \label{eq:drdt_IEF}
        \rnew-\rold=\Pold(\phinew-\phiold),
    \end{equation}
    \begin{equation}
        \label{eq:g_new_IEF}
        g^{n+1}-g^n=g'(r^n)(\rnew-\rold),
    \end{equation}
\end{subequations}
with $P^n=\frac{\delta r}{\delta Q}(\phiold)$. In addition to the regularity and the invertibility conditions that should be satisfied by $g$, we need to assume the derivative of $g$ to be non-negative, namely, 
\begin{equation}
    \label{eq:g_derivative_non_negative}
    g'(r)\geq 0, 
\end{equation}
for any $r\in\mathbb{R}$. This is necessary to ensure the modified energy stability in discrete cases to hold, as stated below in Theorem \ref{thm:IEF_energy_stability}. Before that, we will discuss the procedure to implement this method first.

In a manner analogous to that of the IEC scheme, \eqref{eq:drdt_IEF}, \eqref{eq:g_new_IEF} provide the path representing $\rnew, \gnew$ in terms of $\phinew$. Consequently, one can replace $\rnew, \gnew$ in \eqref{eq:munew_IEF}, resulting in a linear scheme pertaining to $\phinew$ when considered in conjunction with \eqref{eq:phi_update_IEF}. Alternatively, an update can be performed on the tuple $(\phinew, \munew, \rnew, \gnew)$ collectively by developing a linear scheme to resolve \eqref{eq:IEF_scheme} as a unified system. More precisely, at each iteration, the following linear system could be solved:

\begin{equation}
\label{eq:IEF_scheme_implementation}
    \begin{pmatrix}
        \frac{1}{\Delta t}I & -\mathcal{G} & 0 & 0\\
        \Delta & I & -\widetilde{{g}'(\rold){P}^n} & -\Tilde{P}^n \\
        -\Tilde{P}^n & 0 & I & 0 \\
        0 & 0 & -\widetilde{{g}'(\rold)}& I
    \end{pmatrix}
    \begin{pmatrix}
        \phinew \\ \munew \\ \rnew \\ \gnew
    \end{pmatrix} = \begin{pmatrix}
         \frac{1}{\Delta t}\phiold\\0\\
        \rold-P^n\phiold\\
        \gold-g'(\rold)\rold
    \end{pmatrix},
\end{equation}
given that $(\phiold, \mu^n, \rold, 
\gold)$ is known. Here $\widetilde{{g}'(\rold)}$ should also be interpreted as a multiplication operator similar as $\Tilde{P}^n$.

Now we turn to prove a modified energy dissipation law for the IEF scheme which is a discretized version of \eqref{eq:IEF_modified_energy}.

\begin{thm}\label{thm:IEF_energy_stability}
    The numerical scheme \eqref{eq:IEF_scheme} is energy-stable. Specifically, define 
    \begin{equation}
    \label{eq:modified_energy_IEF}
        \EoldIEF=\frac{1}{2}\|\nabla \phi^n\|^2+\int_\Omega g^nr^n,
    \end{equation}
    then
    \begin{equation*}
        \EnewIEF-\EoldIEF\leq0.
    \end{equation*}
\end{thm}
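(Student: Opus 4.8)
The plan is to mimic the proof of Theorem \ref{thm:IEC_energy_stability}, taking inner products of the four scheme equations \eqref{eq:IEF_scheme} against suitably chosen test quantities and then summing. Concretely, I would pair \eqref{eq:gf_numer_IEF} with $\munew\Delta t$, \eqref{eq:munew_IEF} with $\phinew-\phiold$, \eqref{eq:drdt_IEF} with $\bigl[\rnew g'(\rold)+\gnew\bigr]$ (the exact coefficient appearing in \eqref{eq:munew_IEF}), and \eqref{eq:g_new_IEF} with $\rnew$. From the first two pairings the standard manipulation gives $(\mathcal{G}\munew,\munew)\Delta t = \tfrac12\|\nabla\phinew\|^2 - \tfrac12\|\nabla\phiold\|^2 + \tfrac12\|\nabla\phinew-\nabla\phiold\|^2 + \bigl(\phinew-\phiold,\,[\rnew g'(\rold)+\gnew]\Pold\bigr)$, and using \eqref{eq:drdt_IEF} the last inner product becomes $\int_\Omega [\rnew g'(\rold)+\gnew](\rnew-\rold)\,dx$.

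The crux is then to show the purely algebraic (pointwise) inequality
\begin{equation*}
  \bigl[\rnew g'(\rold)+\gnew\bigr](\rnew-\rold) \;\geq\; \gnew\rnew - \gold\rold,
\end{equation*}
after substituting the update \eqref{eq:g_new_IEF}, i.e. $\gnew = \gold + g'(\rold)(\rnew-\rold)$. The plan is to expand both sides: writing $\delta r = \rnew-\rold$, the left side equals $\rnew g'(\rold)\delta r + \gold\delta r + g'(\rold)(\delta r)^2$, while the right side, using $\gnew\rnew = \gold\rnew + g'(\rold)\delta r\,\rnew$, equals $\gold\delta r + g'(\rold)\rnew\delta r$. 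Subtracting, the difference LHS$-$RHS collapses exactly to $g'(\rold)(\delta r)^2$, which is $\geq 0$ precisely because of the assumption \eqref{eq:g_derivative_non_negative} that $g'\geq 0$. (This is the analogue of the role $L$-smoothness played in the IEC proof; here no quadratic Taylor remainder is needed since the "second-order" term is produced exactly by the linearization of $g$.) Integrating over $\Omega$ gives $\int_\Omega [\rnew g'(\rold)+\gnew](\rnew-\rold)\,dx \geq \int_\Omega (\gnew\rnew - \gold\rold)\,dx$.

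Combining, we get $(\mathcal{G}\munew,\munew)\Delta t \geq \tfrac12\|\nabla\phinew\|^2 - \tfrac12\|\nabla\phiold\|^2 + \int_\Omega(\gnew\rnew-\gold\rold)\,dx \geq \EnewIEF - \EoldIEF$, and since $(\mathcal{G}\munew,\munew)\leq 0$ for both $\mathcal{G}=-I$ and $\mathcal{G}=\Delta$ regardless of $\munew$, we conclude $\EnewIEF-\EoldIEF\leq 0$. The main obstacle — really the only non-bookkeeping point — is identifying the correct test function for \eqref{eq:g_new_IEF} (namely $\rnew$, not $\gnew$ or $\rold$) so that the cross terms telescope into $\gnew\rnew-\gold\rold$ with a manifestly nonnegative remainder; everything else is the same discrete integration-by-parts and the identity $2(a-b,a) = \|a\|^2-\|b\|^2+\|a-b\|^2$ used in Theorem \ref{thm:IEC_energy_stability}. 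I would also remark that, unlike the IEC case, convexity of $s(r)=rg(r)$ is never invoked — only $g'\geq 0$ — which is worth noting as the structural payoff of prioritizing linearity over convexity.
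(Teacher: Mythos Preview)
Your proof is correct and follows essentially the same route as the paper: both reduce the nonlinear cross term to $\int_\Omega[\rnew g'(\rold)+\gnew](\rnew-\rold)\,dx$ via \eqref{eq:drdt_IEF}, then use \eqref{eq:g_new_IEF} to telescope it into $\int_\Omega(\gnew\rnew-\gold\rold)\,dx$ plus the nonnegative remainder $\int_\Omega g'(\rold)(\rnew-\rold)^2\,dx$. The only cosmetic difference is that the paper first rewrites $\rnew g'(\rold)(\rnew-\rold)=\rnew(\gnew-\gold)$ and then telescopes, whereas you substitute $\gnew=\gold+g'(\rold)\delta r$ and expand directly; the resulting identity and the use of $g'\geq 0$ are identical.
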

\begin{proof}
    Taking inner product of \eqref{eq:phi_update_IEF} with $\munew\Delta t$, \eqref{eq:mu_update_IEF} with $\phinew-\phiold$, \eqref{eq:r_update_IEF} with $g^{n+1}$, \eqref{eq:g_update_IEF}, and summing them up with \eqref{eq:g_update_IEF} together, we obtain, 
 \begin{align*}
        (\mathcal{G}\munew, \munew)\Delta t&=(\phinew-\phiold, \munew)\\
        &=\left(\phinew-\phiold,  -\Delta \phinew+\left[\rnew g'(r^n)+g^{n+1}\right]\Pold\right)\\
        &=\frac{1}{2}\|\nabla \phinew\|^2-\frac{1}{2}\|\nabla \phiold\|^2+\frac{1}{2}\|\nabla\phinew-\nabla\phiold\|^2\\
        &\quad\,\,+\int_\Omega(g^{n+1}-g^n)\,\rnew\,dx+\int_\Omega \gnew(\rnew-\rold)\,dx\\
        &=\frac{1}{2}\|\nabla \phinew\|^2-\frac{1}{2}\|\nabla \phiold\|^2+\frac{1}{2}\|\nabla\phinew-\nabla\phiold\|^2\\
        &\quad\,\,+\int_\Omega \gnew\rnew\,dx-\int_\Omega \gold\rold\,dx+\int_\Omega (\gnew-\gold)\,(\rnew-\rold)\,dx\\
        &=\frac{1}{2}\|\nabla \phinew\|^2-\frac{1}{2}\|\nabla \phiold\|^2+\frac{1}{2}\|\nabla\phinew-\nabla\phiold\|^2\\
        &\quad\,\,+\int_\Omega \gnew\rnew\,dx-\int_\Omega \gold\rold\,dx+\int_\Omega g'(\rold)(\rnew-\rold)^2\,dx\\
        &\geq \EnewIEF-\EoldIEF,
    \end{align*}
    where we have used the assumption \eqref{eq:g_derivative_non_negative} to see that $g'(\rold)\geq0$. Using the fact that $(\mathcal{G}\munew, \munew)\leq 0$ independent of choice of $\munew$, we have finished the proof.
\end{proof}
Similar as Corollary \ref{cor:mu_estimate_IEC}, we can also obtain an estimate for $\munew$ for the IEF formulation. 
\begin{cor}
\label{cor:mu_estimate_IEF}
    Assume $\hat{E}^0$ is bounded. For fixed $N>0$, we have
    \begin{equation*}
       0\leq -\sum_{n=0}^N (\mathcal{G}\mu^n, \mu^n)\Delta t\leq -\hat{E}^N+\hat{E}^0\leq \hat{E}^0,
    \end{equation*}
    and so it is bounded. Specifically, if $\mathcal{G}=\Delta$, we have 
    \begin{equation*}
       0\leq \sum_{n=0}^N \|\nabla\mu^n\|^2\Delta t\leq \hat{E}^0,
    \end{equation*}
    if $\mathcal{G}=-I$,  we have 
    \begin{equation*}
       0\leq \sum_{n=0}^N \|\mu^n\|^2\Delta t\leq \hat{E}^0.
    \end{equation*}
\end{cor}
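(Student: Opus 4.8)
The plan is to mimic the telescoping argument already used in the proof of Theorem~\ref{thm:IEF_energy_stability}, but now summing over $n$ from $0$ to $N$ rather than examining a single step. First I would recall the one-step inequality established in that theorem, namely that
\begin{equation*}
    (\mathcal{G}\munew, \munew)\Delta t \geq \EnewIEF - \EoldIEF \quad\text{for each } n,
\end{equation*}
which is valid precisely because $g'(\rold)\geq 0$ makes the cross term $\int_\Omega g'(\rold)(\rnew-\rold)^2\,dx$ nonnegative. Summing this over $n = 0,1,\dots,N$ and telescoping the right-hand side gives $\sum_{n=0}^N (\mathcal{G}\mu^n,\mu^n)\Delta t \geq \EnewIEF[N] - \hat{E}^0$ (with an index shift, since the left sum as written in the corollary runs over $\mu^n$; I would just be consistent with whichever convention the statement uses). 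Rearranging yields $-\sum_{n=0}^N (\mathcal{G}\mu^n,\mu^n)\Delta t \leq \hat{E}^0 - \hat{E}^N$.

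Next I would argue the two-sided bound. The lower bound $0 \leq -\sum_{n=0}^N(\mathcal{G}\mu^n,\mu^n)\Delta t$ is immediate: each summand $-(\mathcal{G}\mu^n,\mu^n)\Delta t$ is nonnegative because $\mathcal{G}=-I$ gives $-(\mathcal{G}\mu^n,\mu^n) = \|\mu^n\|^2 \geq 0$, and $\mathcal{G}=\Delta$ gives $-(\Delta\mu^n,\mu^n) = \|\nabla\mu^n\|^2 \geq 0$ after integration by parts with the assumed (periodic or homogeneous Neumann) boundary conditions. For the upper bound I need $\hat{E}^0 - \hat{E}^N \leq \hat{E}^0$, i.e. $\hat{E}^N \geq 0$; this follows since $\hat{E}^N = \tfrac12\|\nabla\phi^N\|^2 + \int_\Omega g^N r^N$, and under the invertibility/positivity setup of the IEF formulation the reconstructed energy-density surrogate $g^N r^N$ stays nonnegative (it equals $F(\phi)+A_1 \geq 0$ in the continuous case, and the discrete update is designed so that $g^n r^n$ inherits a matching lower bound). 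Then the two specialized inequalities are obtained simply by substituting the explicit form of $-(\mathcal{G}\mu^n,\mu^n)$ for the two choices of $\mathcal{G}$.

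The only genuine subtlety — and the step I would flag as the main obstacle — is justifying $\hat{E}^N \geq 0$, since unlike the one-step monotonicity this positivity is a statement about the sign of $\int_\Omega g^n r^n$ for the discrete iterates, which are produced by the explicit update \eqref{eq:g_new_IEF} rather than by the exact relation $g = g(r)$. One clean way around it is to phrase the corollary exactly as the authors have: the conclusion $-\sum (\mathcal{G}\mu^n,\mu^n)\Delta t \leq -\hat{E}^N + \hat{E}^0 \leq \hat{E}^0$ already presumes $\hat{E}^N\geq 0$, so in the write-up I would either invoke the nonnegativity of $g r$ coming from the defining relation $r g(r) = F(\phi)+A_1 \geq 0$ together with the structural assumptions on $g$, or simply state the bound $-\hat{E}^N + \hat{E}^0$ as the sharp estimate and note that the final $\leq \hat{E}^0$ holds whenever the modified energy is nonnegative. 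Everything else is a routine telescoping sum plus an integration by parts, so the proof is short; I would keep it to a few lines, parallel in structure to Corollary~\ref{cor:mu_estimate_IEC}.
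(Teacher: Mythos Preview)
Your approach is correct and is exactly the (implicit) argument the paper intends: the corollary is stated without proof, as an immediate consequence of telescoping the one-step inequality from Theorem~\ref{thm:IEF_energy_stability}, parallel to Corollary~\ref{cor:mu_estimate_IEC}. Your flag about the nonnegativity of $\hat{E}^N$ is a fair observation---the paper simply asserts $-\hat{E}^N+\hat{E}^0\leq \hat{E}^0$ without commenting on why $\int_\Omega g^N r^N\geq 0$ should hold for the discrete iterates, so you are not missing anything the paper supplies.
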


\subsection{An example of the IEF formulation}\label{subsec:IEF_example}
Similar as what we have done after establishing the IEC scheme, in this part, we will provide a detailed example to illustrate the practicality of the IEF method. A suitable choice is to take $g(r)=r^{2k+1}$, for $k=0, 1, 2, \cdots$. It is easy to justify that $rg(r)=r^{2k+2}$ is invertible over $[0,\infty)$ and assumption \eqref{eq:g_derivative_non_negative} is satisfied since $g'(r)=(2k+1)r^{2k}\geq0$. This would result in 
\begin{equation}
    \label{eq:IEF_mono_r}
    r(\phi)=\left(F(\phi)+A_1\right)^{\frac{1}{2k+2}}.
\end{equation}
Therefore, the numerical scheme \eqref{eq:IEF_scheme} can be implemented as
\begin{subequations}
\label{eq:IEF_scheme_poly}
    \begin{equation}
    \label{eq:gf_numer_IEF_mono}
        \Dtphi=\mathcal{G} \munew,
    \end{equation}
    \begin{equation}
    \label{eq:munew_IEF_mono}
        \munew = -\Delta \phinew+\left[(2k+1)(\rold)^{2k}\rnew+\gnew\right]\Pold,
    \end{equation}
    \begin{equation}
    \label{eq:drdt_IEF_mono}
        \rnew-\rold=\Pold(\phinew-\phiold),
    \end{equation}
    \begin{equation}
        \label{eq:g_new_IEF_mono}
        g^{n+1}-g^n=(2k+1)(\rold)^{2k}(\rnew-\rold),
    \end{equation}
\end{subequations}
where $\Pold = P(\phiold)$ and 
\begin{equation}
    \label{eq:IEF_poly_P}
    P(\phi)=\frac{\delta r}{\delta \phi}(\phi)=\frac{f(\phi)}{(2k+2)(F(\phi)+A_1)^{\frac{2k+1}{2k+2}}}.
\end{equation}
This procedure can be summarized as the following Algorithm \ref{alg:IEF_monomial}.

\begin{algorithm}
\caption{IEF scheme induced by monomial functions} \label{alg:IEF_monomial}
\begin{algorithmic}[1] 
\Procedure{}{}
    \State Set the step size $\Delta t$
    \State Set the total number of iterations $N$
    \State Set the initial value $\phi^0 = \phi_0$
    \State Compute $r^0 = r(\phi^0)$ using equation \eqref{eq:IEF_mono_r}
    \State Compute $g^0=(r^0)^{2k+1}$
    \State Compute $P^0 = P(\phi^0)$ using equation \eqref{eq:IEF_poly_P}
    
    \For {$n = 0, 1, 2, ..., N-1$}
        \State Construct the matrix $A=\begin{pmatrix}
        \frac{1}{\Delta t}I & -\mathcal{G} & 0 & 0\\
        \Delta & I & -(2k+1)\widetilde{(\rold)^{2k}\Pold }& -\Pold \\
        -\Tilde{P}^n & 0 & I & 0 \\
        0 & 0 & -(2k+1)\widetilde{(\rold)^{2k}} & I
    \end{pmatrix}$ using $P^{n}$
        \State Construct the right-hand side $b=\begin{pmatrix}
         \frac{1}{\Delta t}\phiold\\0\\
        \rold-P^n\phiold\\
        \gold-(2k+1)(\rold)^{2k}\rold
    \end{pmatrix}$
        \State Update $\phi^{n+1}$, $r^{n+1}, \gnew$ by solving the linear system $A\begin{pmatrix}
            \phi^{n+1}\\\mu^{n+1}\\ r^{n+1}\\ \gnew
        \end{pmatrix} = b$
        \State Update $P^{n+1} = P(\phi^{n+1})$ using equation \eqref{eq:IEF_poly_P}
    \EndFor
\EndProcedure
\end{algorithmic}
\end{algorithm}

It merits attention that by simply assigning a value of $k=0$, which consequently yields $g(r)=r$, we can reconstruct the IEQ method. It is because, in this scenario, the computation $g(r)r=r^2$ holds, and the update rule given by \eqref{eq:g_new_IEF_mono} is equivalent to \eqref{eq:drdt_IEF_mono}. This enforces $\gnew=\rnew$ for each iterative step $n=0, 1, 2, \ldots ,N-1.$ Such a formulation endorses our claim that the IEF is genuinely an extension of the IEQ method.

When choosing a suitable function $g$ for the IEF scheme, our selection should not be confined to monomial functions, provided they adhere to the abovementioned conditions. However, finding functions other than the monomials that result in an explicit formula for $r$ in terms of $F(\phi)$ is challenging. In such instances, one might need to employ root-finding techniques to determine the value of $\Pold$ once $\phiold$ is known. This inherent challenge positions our method as an Invariant Energy Monomial method in the examples provided here and in subsequent numerical instances. Even so, should an efficient technique be found for computing the root of the equation $g(r)r=F(\phi)+A_1 $ for a specific form of $g$, constructing an IEF scheme using our approach would not pose a significant challenge. This potential advancement forms an intriguing aspect for future exploration of this method.

\section{Numerical Experiments}
\label{sec:Numerics}

In this section, we conduct a series of numerical experiments, focusing on the Allen-Cahn \cite{allen1979microscopic} and Cahn-Hilliard equations \cite{cahn1959free, Cahn-Hilliard} under the application of periodic boundary conditions in a two-dimensional domain. The equations above are commonplace test cases for gradient flow-targeted numerical algorithms, as illustrated in \cite{brenner2018robust,diegel2016stability,gao2022unconditionally,shen2018scalar, shen2019new, zhao2021second}. Moreover, these equations are particular manifestations of general gradient flows \eqref{eq:gradient_flow_system} and can be formulated using the free energy:
\begin{equation*}
    E(\phi)=\int_\Omega\left(\frac{\varepsilon^2}{2}|\nabla\phi|^2+F(\phi)\right)\,dx,
\end{equation*}
where 
\begin{equation}
    \label{eq:AC_CH_free_energy}
    F(\phi) = \frac{1}{4}\left( \phi^2-1\right)^2.
\end{equation}
Here $\epsilon$ symbolizes a parameter that introduces stiffness issues into the PDE system when $\varepsilon \ll 1$ \cite{yang2020convergence}.

Employing the variational approach to $E(\phi)$ in $L^2$, the Allen-Cahn equation materializes as:
\begin{subequations}
    \label{eq:Allen-Cahn}
    \begin{equation}
        \phi_t = -M\mu,
    \end{equation}
    \begin{equation}
        \mu = -\varepsilon^2\Delta \phi+f(\phi).
    \end{equation}
\end{subequations}
Here $M$ is the mobility constant, and $f(\phi)=\frac{\delta F}{\delta \phi}=\phi(\phi^2-1)$ \cite{chen2019fast}.

In a similar manner, applying the variational approach to $E(\phi)$ within the $H^{-1}$ space yields the Cahn-Hilliard equation:
\begin{subequations}
    \label{eq:Cahn-Hilliard}
    \begin{equation}
        \phi_t = M\Delta\mu,
    \end{equation}
    \begin{equation}
        \mu = -\varepsilon^2\Delta \phi+f(\phi).
    \end{equation}
\end{subequations}

We choose a computational domain defined by a square $\Omega=[0, 2\pi]\times [0, 2\pi]$ for the subsequent experiments. We will implement a standard finite difference method for this problem with periodic boundary conditions to facilitate full discretization. Unless otherwise stated, the domain will be discretized utilizing a $40\times 40$ grid, and we will set the parameters $M=0.6$ and $\varepsilon=0.4$.

\subsection{Accuracy test}

In this section, we start our numerical experiments by assessing the convergence rates of the Invariant Energy Convexification (IEC) scheme in its application to the Allen–Cahn equation, as given by \eqref{eq:Allen-Cahn}. For these experiments, we adopt

\begin{equation}
\label{eq:accuracy_test_function}
\phi(x,y,t) = \sin(x)\cos(y)\cos(t)
\end{equation}
as the exact solution and include an appropriate right-hand side force field term to ensure that the designated solution complies with the system \eqref{eq:Allen-Cahn}. We draw attention to the fact that the function $\phi$ employed here aligns with the choice made for the accuracy test in \cite{yang2020convergence}.

In the context of the IEC scheme \eqref{eq:IEC_scheme}, we examine three distinct choices for the function $c(r)$, namely, ${ \ln(1+e^r), \ln(r)^2, r^2}$. Each of these functions are L-smooth and convex, and an elementary analysis would suggest that $L=2$ serves as an appropriate choice for all three. Maintaining consistency, we set $\alpha = 0.5$ for our tests.

We then measure the $L^2$ errors for the variable $\phi$, contrasting our numerical solution with the exact solution at $T_{end}=1$ under varying time step sizes. As visualized in Figure \ref{fig:accuracy_test_a}, each of the three choices of convex functions demonstrates first-order accuracy. The error is linearly decreasing when the time step size is decreasing as well.  More specific error data for varying time step sizes can be found in Table \ref{tab:accuracy_test}.

A noteworthy observation emerges when keeping the parameters $\alpha$ and $L$ constant in the numerical scheme: errors vary depending on the choice of the corresponding convex function. Specifically, $c(r)=\ln(r)^2$ appears to yield the best performance, while $c(r)=r^2$ lags behind. This finding underscores the likelihood that, for a particular problem, a certain function may appear to be more suitable to the construction of the numerical scheme in contrast to a quadratic function to get better precision. 

\begin{figure}[h]
    \centering
    \begin{subfigure}{0.45\textwidth}
        \centering
        \includegraphics[width=\textwidth]{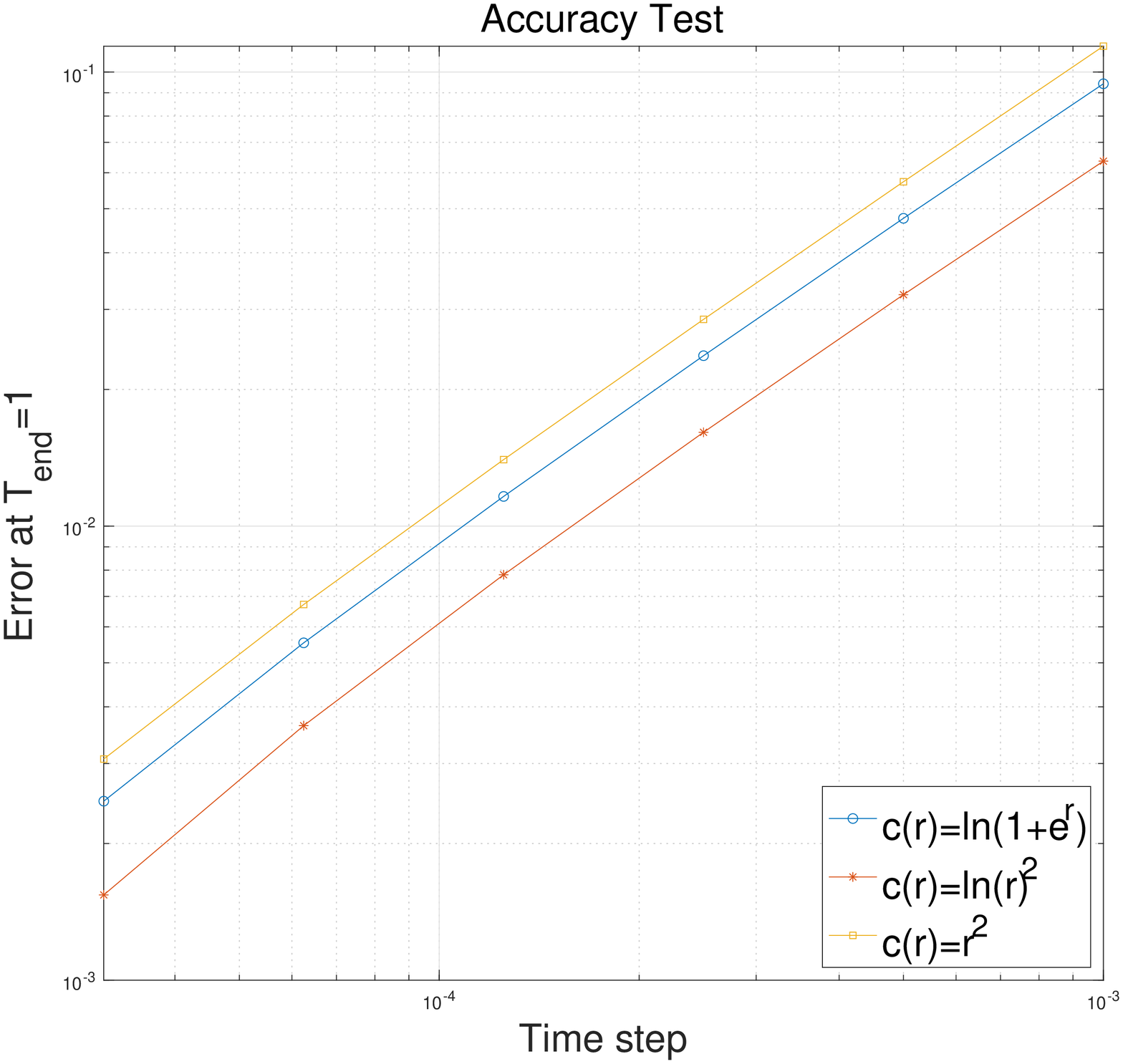}
        \caption{}
        \label{fig:accuracy_test_a}
    \end{subfigure}\hfill
    \begin{subfigure}{0.52\textwidth}
        \centering
        \includegraphics[width=\textwidth]{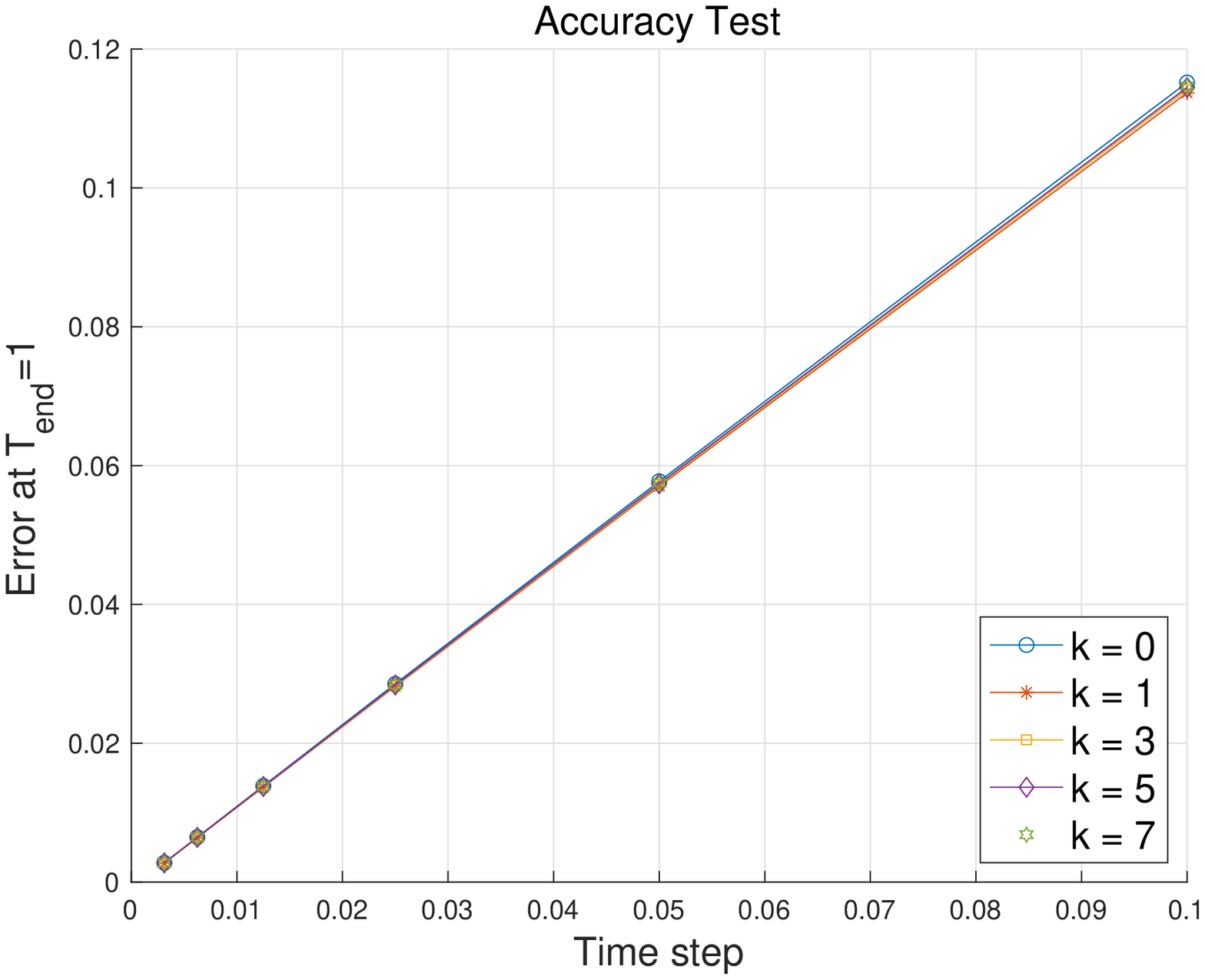}
        \caption{}
        \label{fig:accuracy_test_b}
    \end{subfigure}
    \caption{The $L^2$ numerical errors at $T_{end}=1$ for $\phi$ under different formulation. (a) Illustrates the numerical errors of the IEC scheme using different choices of convex functions. (b) Demonstrates the numerical errors of the IEF scheme, where varying monomials are utilized as the multiplying functions.}
    \label{fig:accuracy_test}
\end{figure}

\begin{table}[htbp]
    \centering
    \caption{The $L^2$ numerical errors at $T_{end}=1$ for $\phi$ with three different convex functions at different time step size}\footnotesize
    \begin{tabular}{
         |l
        |S[round-mode=figures, round-precision=3, scientific-notation=true, table-format=1.2e1]
        |S[round-mode=figures, round-precision=3, scientific-notation=true, table-format=1.2e1]
        |S[round-mode=figures, round-precision=3, scientific-notation=true, table-format=1.2e1]
        |S[round-mode=figures, round-precision=3, scientific-notation=true, table-format=1.2e1]
        |S[round-mode=figures, round-precision=3, scientific-notation=true, table-format=1.2e1]
        |S[round-mode=figures, round-precision=3, scientific-notation=true, table-format=1.2e1]
        |}
    \hline
    \diagbox{Function}{Time step} & {$\delta t = 1.00 \times 10^{-1}$} & {$\delta t = 5.00 \times 10^{-2}$} & {$\delta t = 2.50 \times 10^{-2}$} & {$\delta t = 1.25 \times 10^{-2}$} & {$\delta t = 6.25 \times 10^{-3}$} & {$\delta t = 3.12 \times 10^{-3}$}\\
    \hline
    $c(r)=\ln(1+e^r)$ & 0.094215 & 0.047625 & 0.023729 & 0.011624 & 0.005532 & 0.002479 \\
    \hline $c(r)=\ln(r)^2$ & 0.063632 & 0.032332 & 0.016092 & 0.007814 & 0.003635 & 0.001541 \\
    \hline $c(r)=r^2$ & 0.114006 & 0.057330 & 0.028529 & 0.014009 & 0.006720 & 0.003069\\
    \hline
    \end{tabular}
    \label{tab:accuracy_test}
\end{table}

We further seek to explore the impact of alterations in the values of $\alpha$ on the accuracy of the IEC scheme. With this objective in mind, we apply the IEC schemes with $c(r)=\ln(1+e^r)$ and $c(r)=\ln(r)^2$ using a range of values for $\alpha$. We continue to assess the $L^2$ error between the numerical solution and the exact solution at $T_{end}=1$. The outcomes of this exploration are depicted in Figure \ref{fig:accuracy_test_alpha}. For this experiment, we select values for $\alpha$ that range from its theoretical lower bound of $0.5$—a value necessary to ensure the energy stability of the numerical scheme—up to $16$. When the time step size is large, the performance of numerical schemes with different values of $\alpha$ has obvious differences. In comparison, such a difference becomes negligible when the time step size is small.  Another intriguing fact is that when the time step size is relatively large, an increase in the value of $\alpha$ initially enhances the accuracy of the corresponding numerical scheme but then reduces precision. When constructing the numerical scheme using $c(r)=\ln(1+e^r)$, we find that optimal accuracy is achieved for values of $\alpha$ within the interval $[4,8]$. Contrarily, for $c(r)=\ln(r)^2$, the scheme exhibits optimal performance when $\alpha$ lies within the range $[2,4]$. These findings suggest that different convex functions may produce distinct optimal values for $\alpha$. As such, when we use a relatively large time step, careful consideration of $\alpha$'s value is necessary when implementing the IEC scheme.

\begin{figure}[h]
    \centering
    \begin{subfigure}{0.45\textwidth}
        \centering
        \includegraphics[width=\textwidth]{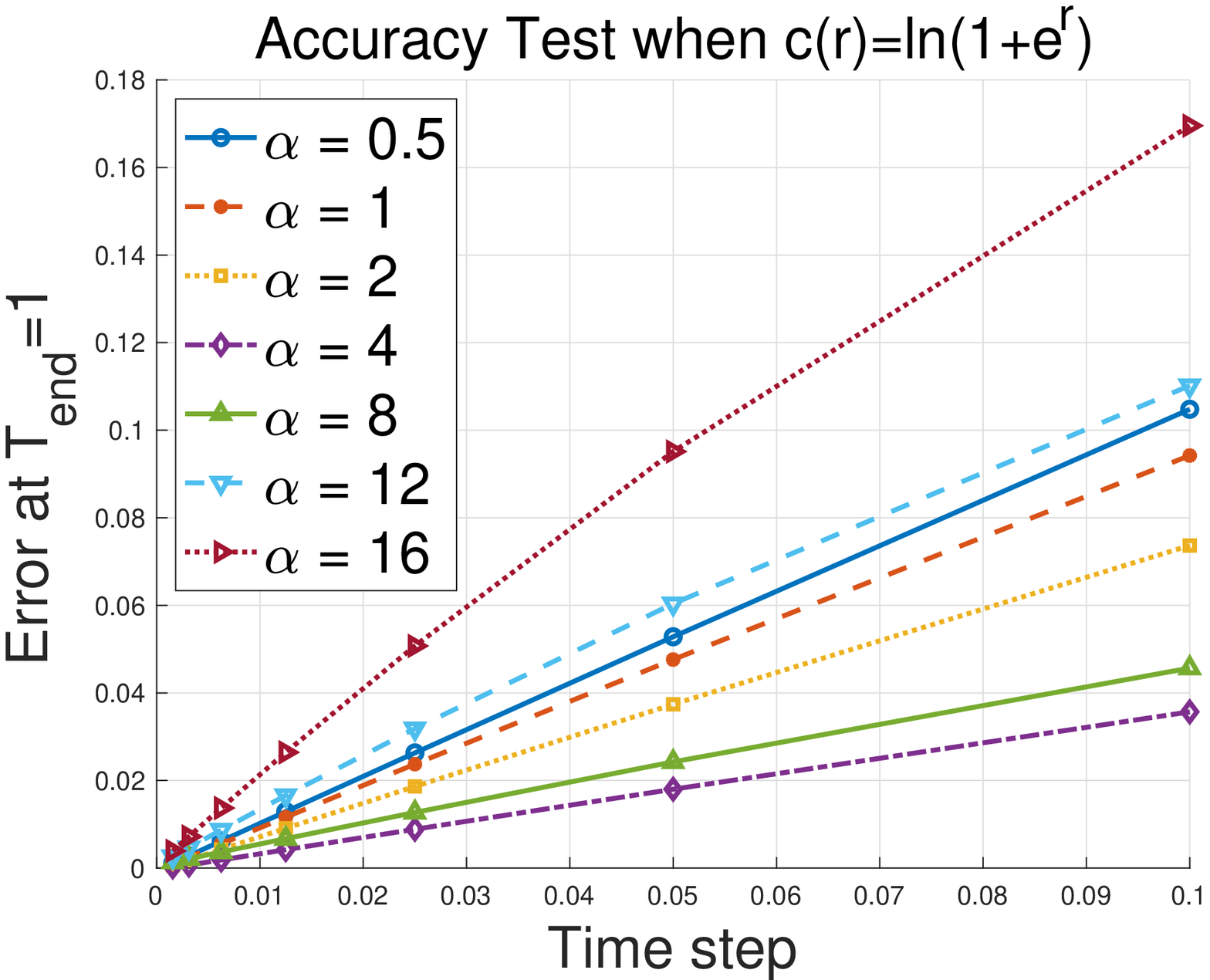}
        \caption{}
        \label{fig:accuracy_test_alpha_a}
    \end{subfigure}\hfill
    \begin{subfigure}{0.45\textwidth}
        \centering
        \includegraphics[width=\textwidth]{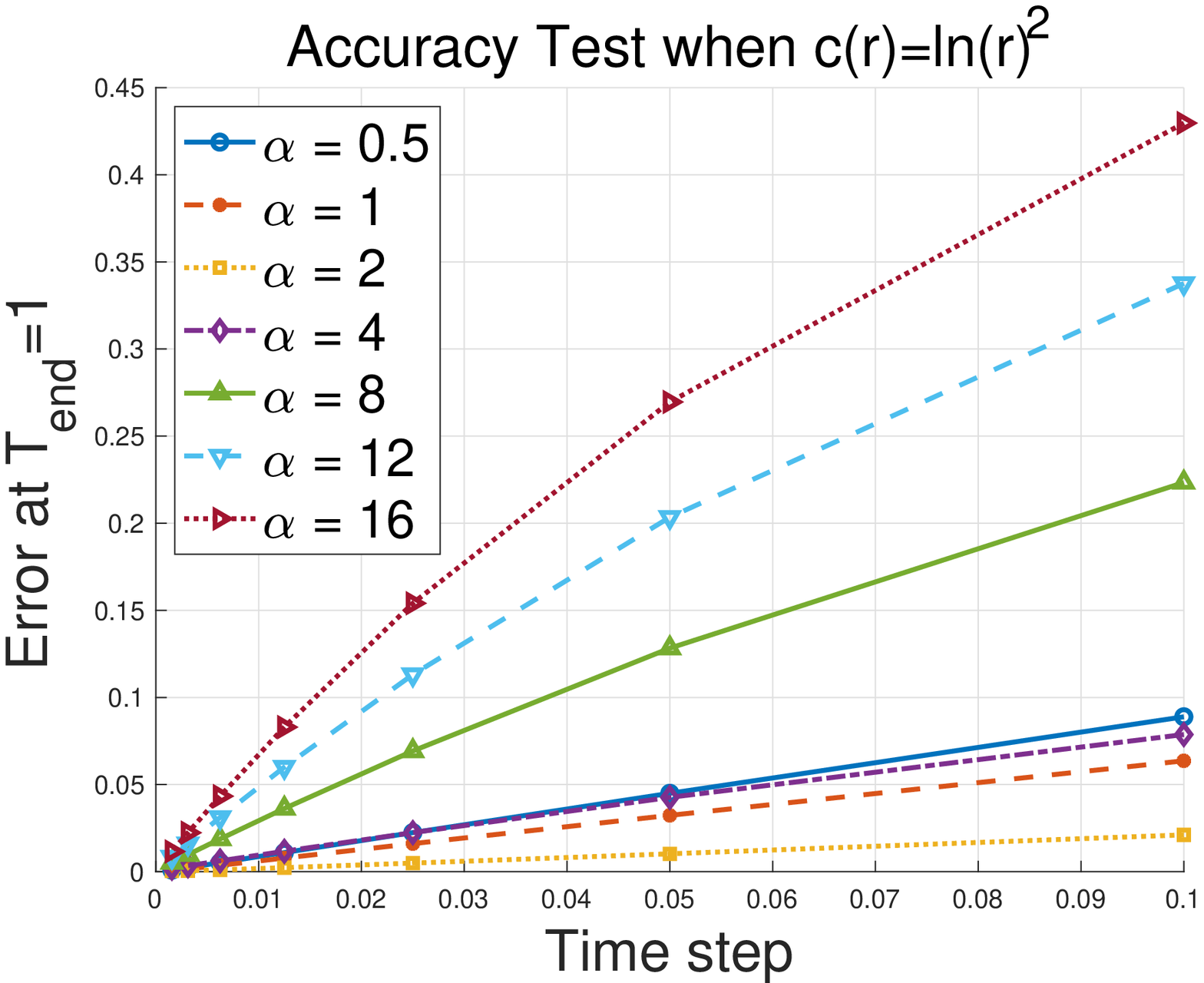}
        \caption{}
        \label{fig:accuracy_test_alpha_b}
    \end{subfigure}
    \caption{The $L^2$ numerical errors at $T_{end}=1$ for $\phi$ with different values of $\alpha$ under the IEC formulation. (a) Scheme constructed with $c(r)=\ln{(1+e^r)}$. (b) Scheme constructed with $c(r)=\ln(r)^2$.}
    \label{fig:accuracy_test_alpha}
\end{figure}

We now discuss the experimental findings of the Invariant Energy Functionalization (IEF) scheme. We will employ the scheme constructed using a monomial as the auxiliary function $g$, a concept introduced in Section \ref{subsec:IEF_example}. Our evaluation metric remains the $L^2$ error for $\phi$, calculated between the numerical and exact solutions at $T_{end}=1$. The errors are illustrated in Figure \ref{fig:accuracy_test_b}, and the precise data is provided in Table \ref{tab:IEF_accuracy_test}. Insight into these results reveals that the highest level of algorithmic precision is achieved when $g(r)=r$, corresponding to the Invariant Energy Quadratization (IEQ) method.
Nevertheless, it is worth noting that the discrepancies between different values of $k$ are tiny. Furthermore, regardless of the choice of $k$, all options yield first-order accuracy. It implies that the norm of errors diminishes linearly to the reduction in time step size. This outcome further validates the robustness and utility of the IEF scheme in this context. 

\begin{table}[htbp]
    \centering
    \caption{The $L^2$ numerical errors at $T_{end}=1$ for $g(r)=r^k$ with different values of $k$}\footnotesize
    \begin{tabular}{
         |l
        |S[round-mode=figures, round-precision=3, scientific-notation=true, table-format=1.2e1]
        |S[round-mode=figures, round-precision=3, scientific-notation=true, table-format=1.2e1]
        |S[round-mode=figures, round-precision=3, scientific-notation=true, table-format=1.2e1]
        |S[round-mode=figures, round-precision=3, scientific-notation=true, table-format=1.2e1]
        |S[round-mode=figures, round-precision=3, scientific-notation=true, table-format=1.2e1]
        |S[round-mode=figures, round-precision=3, scientific-notation=true, table-format=1.2e1]
        |}
    \hline
    \diagbox{$g(r)=r^k$}{Time step} & {$\delta t = 1.00 \times 10^{-1}$} & {$\delta t = 5.00 \times 10^{-2}$} & {$\delta t = 2.50 \times 10^{-2}$} & {$\delta t = 1.25 \times 10^{-2}$} & {$\delta t = 6.25 \times 10^{-3}$} & {$\delta t = 3.12 \times 10^{-3}$}\\
    \hline
    $k=0$ & 0.115178529752356&	0.0577313605929760&	0.0285583965072185&	0.0138571206292417&	0.00647896949207982&	0.00278705659803150 \\
    \hline $k=1$ & 0.113698137982773 &	0.0570092662997332&	0.0282023375688065&	0.0136805653966672&	0.00639130173889935&	0.00274374644253342 \\
    \hline $k=3$ &0.114145797493452	&0.0572262547547561	&0.0283090930170141	&0.0137335144095360	&0.00641767426632956	&0.00275689452740749\\
    \hline  $k=5$ & 0.114469994858300	&0.0573837230114717	&0.0283866211171265	&0.0137719627053119	&0.00643680380102496	&0.00276640171421471\\
    \hline  $k=7$ & 0.114664869002037	&0.0574783927357289	&0.0284332327431449	&0.0137950775853243	&0.00644830246187641	&0.00277211394424855\\
    \hline
    \end{tabular}
    \label{tab:IEF_accuracy_test}
\end{table}

\subsection{Energy stability}

In our discussion thus far, we have highlighted a significant advantage shared by the IEC and the IEF schemes, which they inherit from the Invariant Energy Quadratization (IEQ) method: the preservation of the energy dissipation law. In this section, we set out to empirically validate this property through a series of numerical experiments. We will examine an example with the initial condition given by:
\begin{equation}
\label{eq:energy_stability_example_initial_condition}
\phi(x,y,0)=\sin(x)\cos(y),
\end{equation}
and scrutinize the evolution of energy over the time interval $[0,5]$ when employing both the IEC and the IEF schemes to solve both the Allen-Cahn and Cahn-Hilliard equations.

We commence with the IEC scheme. We maintain a consistent choice of $c(r)$ as the Softplus function for these experiments, thereby employing Algorithm \ref{alg:IEC_softplus} for our implementations. As depicted in Figure \ref{fig:energy_stability_IEC}, the modified energy exhibits a monotonic decrease independent of the time step size for both the Allen-Cahn and Cahn-Hilliard equations. The definition of the modified energy, as referenced here, aligns with equation \eqref{eq:modified_energy_IEC}. This outcome confirms preserving the energy dissipation property within the IEC scheme.

\begin{figure}[h]
\centering
\begin{subfigure}{0.45\textwidth}
\centering
\includegraphics[width=\textwidth]{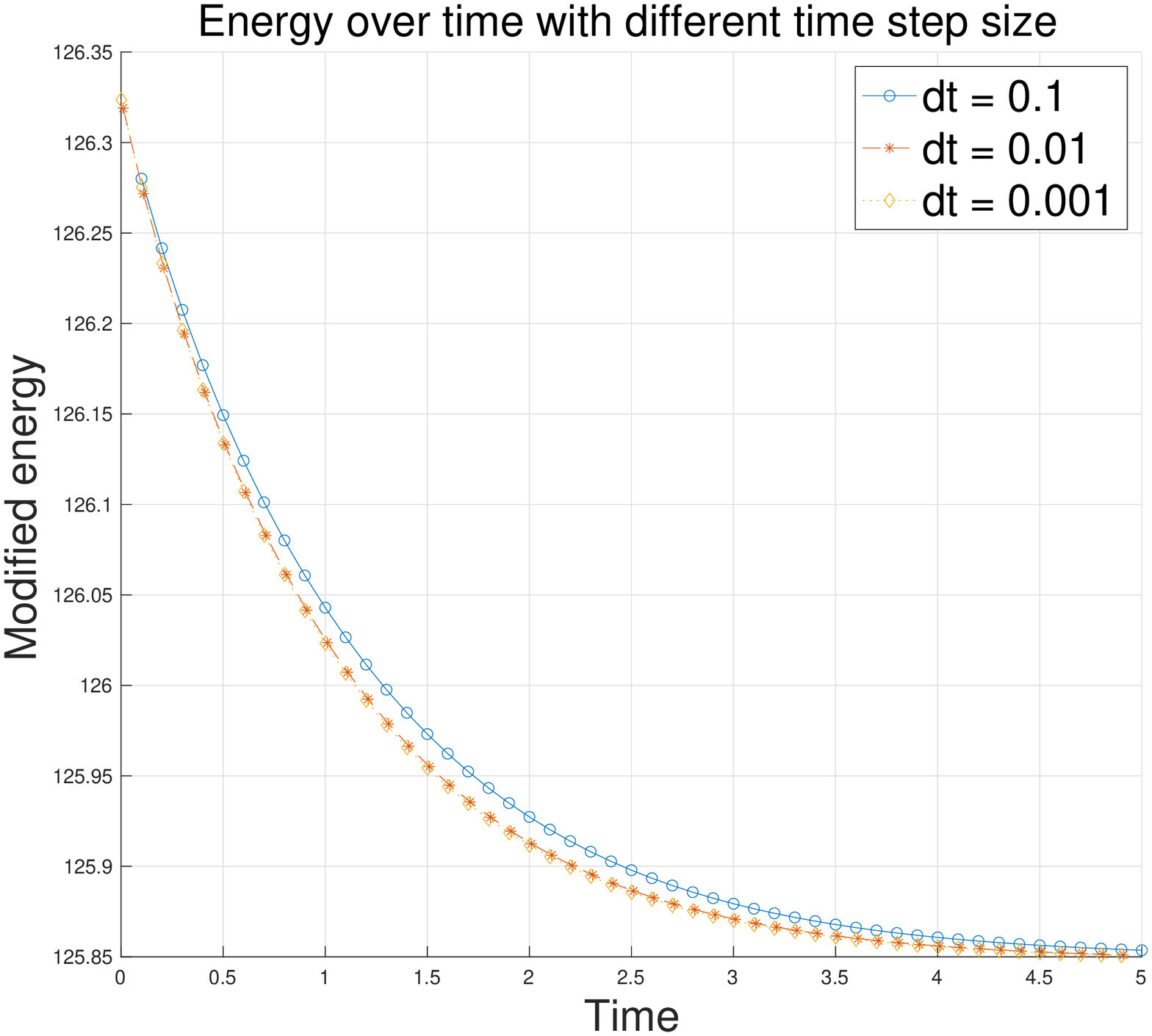}
\caption{}
\label{fig:energy_stability_IEC_AC}
\end{subfigure}\hfill
\begin{subfigure}{0.48\textwidth}
\centering
\includegraphics[width=\textwidth]{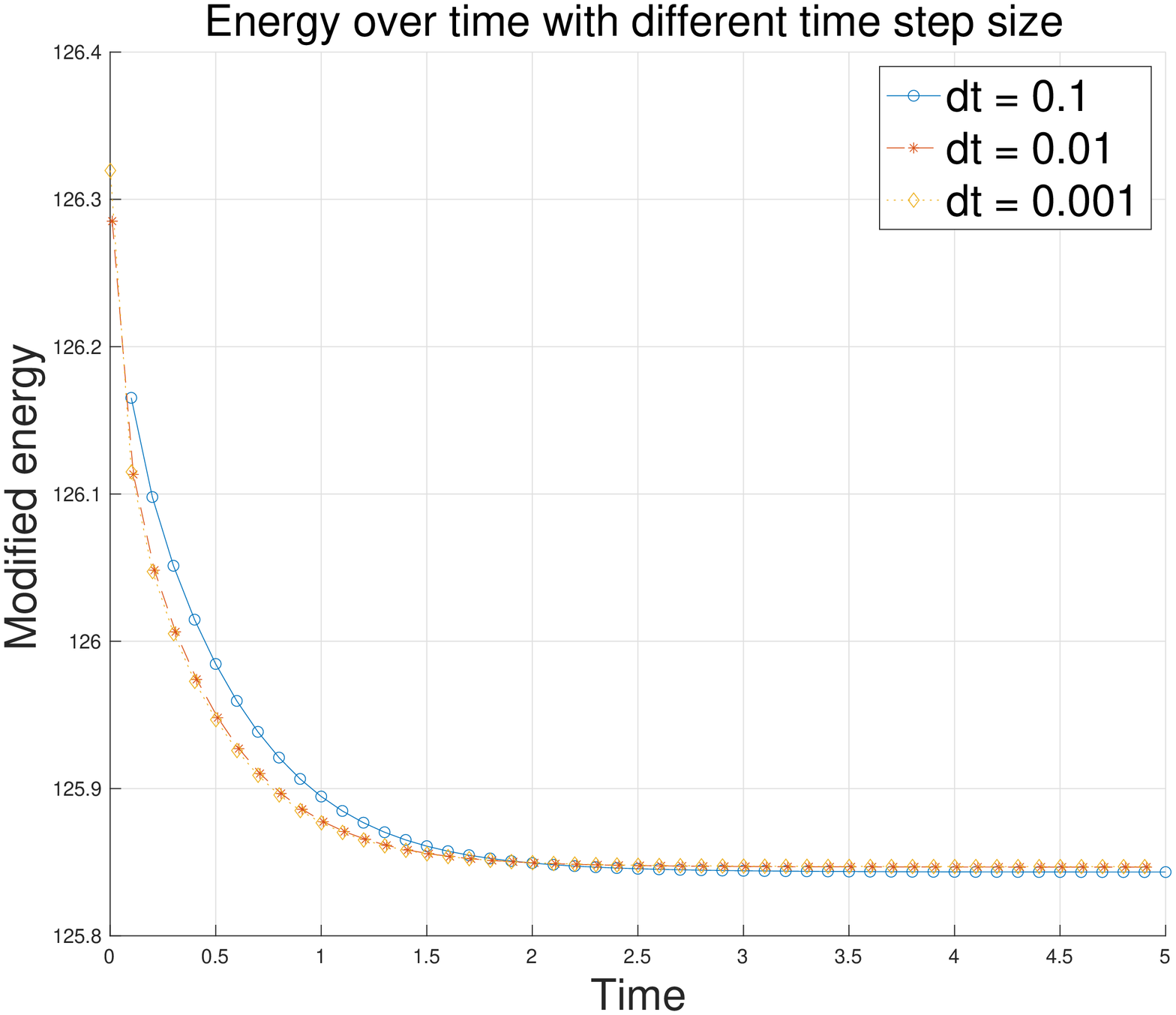}
\caption{}
\label{fig:energy_stability_IEC_CH}
\end{subfigure}
\caption{ Evolution of the modified energy over time under the IEC scheme with $c(r)=\ln(1+e^r)$: (a) Allen-Cahn equation; (b) Cahn-Hilliard equation.}
\label{fig:energy_stability_IEC}
\end{figure}

Another research question is the difference between the modified and original energies. Specifically, the original energy function $F(\phi)$, defined by equation \eqref{eq:AC_CH_free_energy}, is a function of the variable $\phi$, whereas its modified counterpart, $c(r)$, is a function of $r$ in the IEC formulation. An intriguing question is whether $c(r)$ would converge to $F(\phi)$ in the limit as the time step size approaches zero. In more technical terms, we aim to compute the quantity $\lvert \int_\Omega (c(r)-F(\phi))\vert$ at $T_{end}=5$ for various time step sizes and observe if this quantity approaches zero as the time step size diminishes.

To investigate this, we employ the same example with the initial condition specified by equation \eqref{eq:energy_stability_example_initial_condition}, persisting with the use of the Softplus function for constructing our IEC scheme. As demonstrated in Figure \ref{fig:energy_modified_energy_difference_IEC_AC}, the difference between the original energy and the modified energy indeed appears to approach zero as the time step size shrinks, when tested on the Allen-Cahn equation, with a first-order convergence rate. Furthermore, figure \ref{fig:energy_modified_energy_difference_IEC_CH} validates the same numerical results for the Cahn-Hilliard equation. This result offers additional evidence in favor of implementing the change of variable procedure, underscoring that the auxiliary variable computed by the numerical scheme will converge to the original function it aims to represent when the time step size is sufficiently small.

\begin{figure}[h]
\centering
\begin{subfigure}{0.45\textwidth}
\centering
\includegraphics[width=\textwidth]{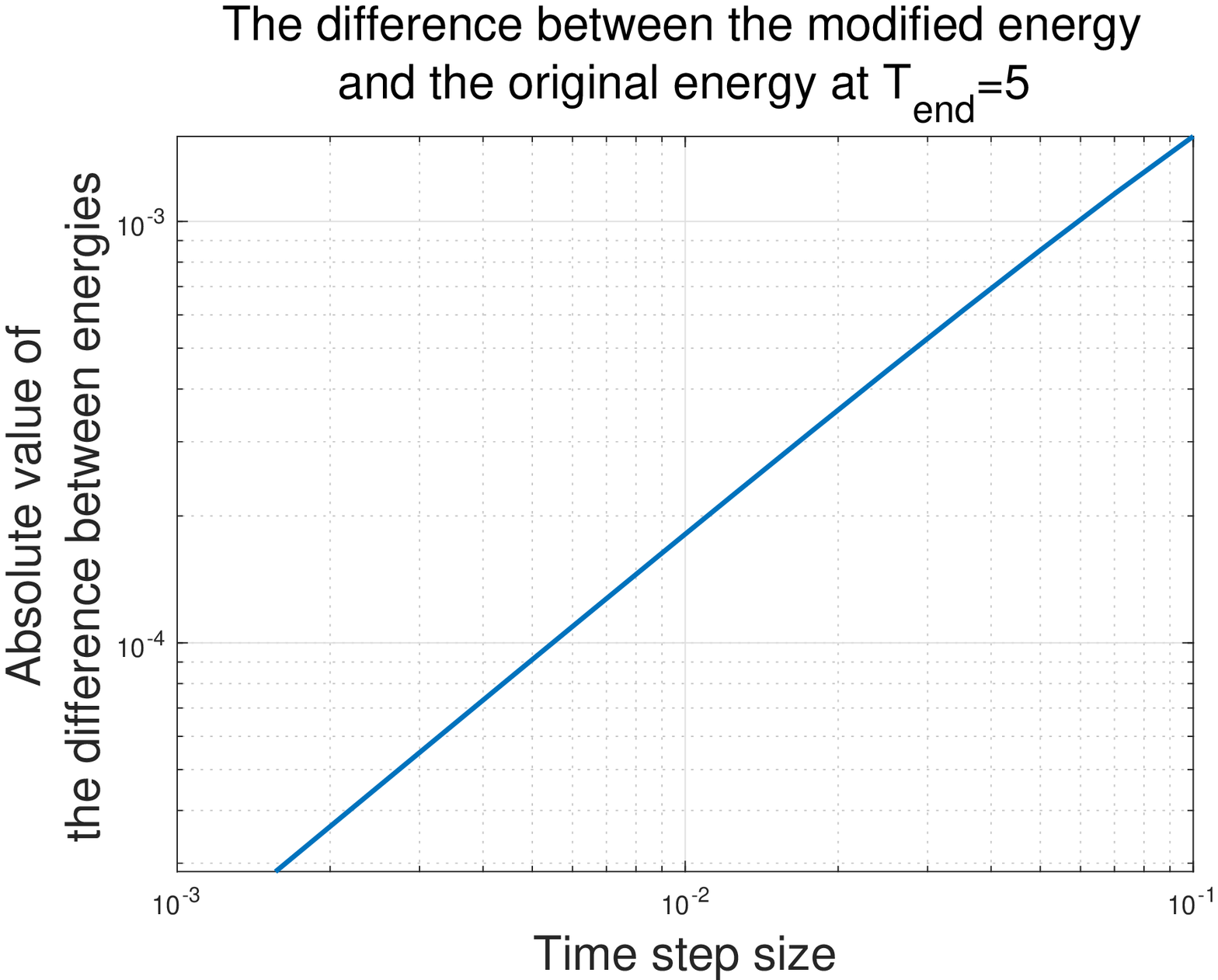}
\caption{}
\label{fig:energy_modified_energy_difference_IEC_AC}
\end{subfigure}\hfill
\begin{subfigure}{0.45\textwidth}
\centering
\includegraphics[width=\textwidth]{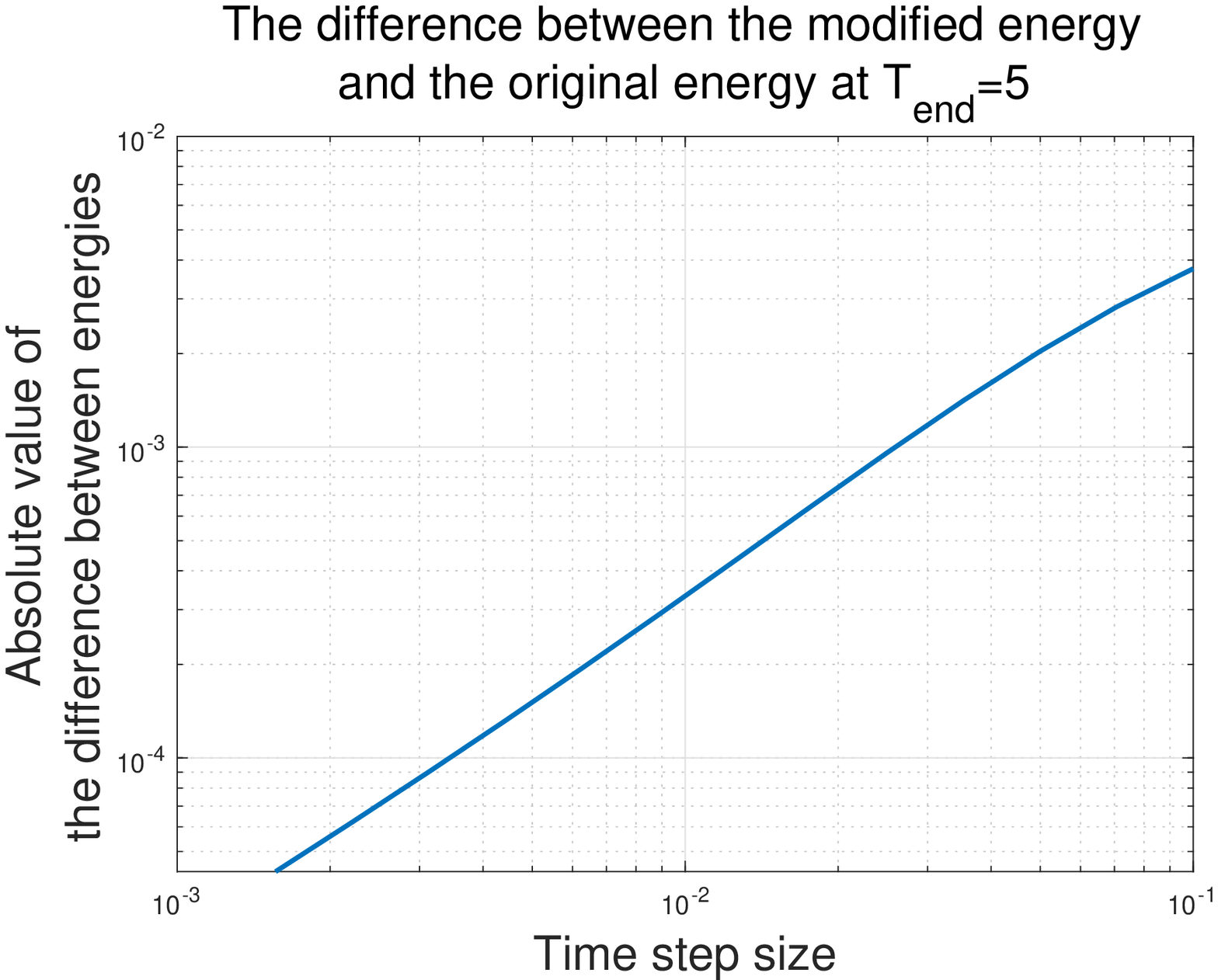}
\caption{}
\label{fig:energy_modified_energy_difference_IEC_CH}
\end{subfigure}
\caption{ Graphs depicting $\lvert \int_\Omega (c(r)-F(\phi))\vert$ at $T_{end}=5$ for varying time step sizes under the IEC scheme, constructed using the Softplus function: (a) Allen-Cahn equation; (b) Cahn-Hilliard equation.}
\label{fig:energy_modified_energy_difference_IEC}
\end{figure}

We next turn our attention to comparable experiments performed using the IEF scheme. We shall confine our discussion to applying the IEF scheme to the Cahn-Hilliard equation for brevity. Employing the same initial condition given by equation \eqref{eq:energy_stability_example_initial_condition}, we choose $g(r)=r^7$ to facilitate our investigations. The modified energy in this context is defined as per equation \eqref{eq:modified_energy_IEF}. Figure \ref{fig:IEF_CH_Energy_different_dt} delineates the evolution of this modified energy over time for varying time step sizes. The modified energy exhibits a monotonous decrease, in alignment with our theoretical predictions outlined in Theorem \ref{thm:IEF_energy_stability}.

For our second experiment using the IEF scheme, we diverge slightly from the approach used in the IEC scheme, where we computed the discrepancy between the modified and original energies. Instead, we focus on the disparity between the auxiliary variable and its corresponding value as a function of $\phi$ of differing time step sizes. As illustrated in Figure \ref{fig:IEF_CH_difference_auxiliary_variable}, the $L^2$ norm of the difference between $\lvert g-g\left(r(\phi)\right)\rvert$ and $\lvert r-r(\phi)\rvert$ at $T_{end}=5$ shows a linear dependency on the time step size, again demonstrating first-order accuracy. The results of these two experiments attest to the robustness and accuracy of the IEF scheme.

\begin{figure}[h]
\centering
\begin{subfigure}{0.45\textwidth}
\centering
\includegraphics[width=\textwidth]{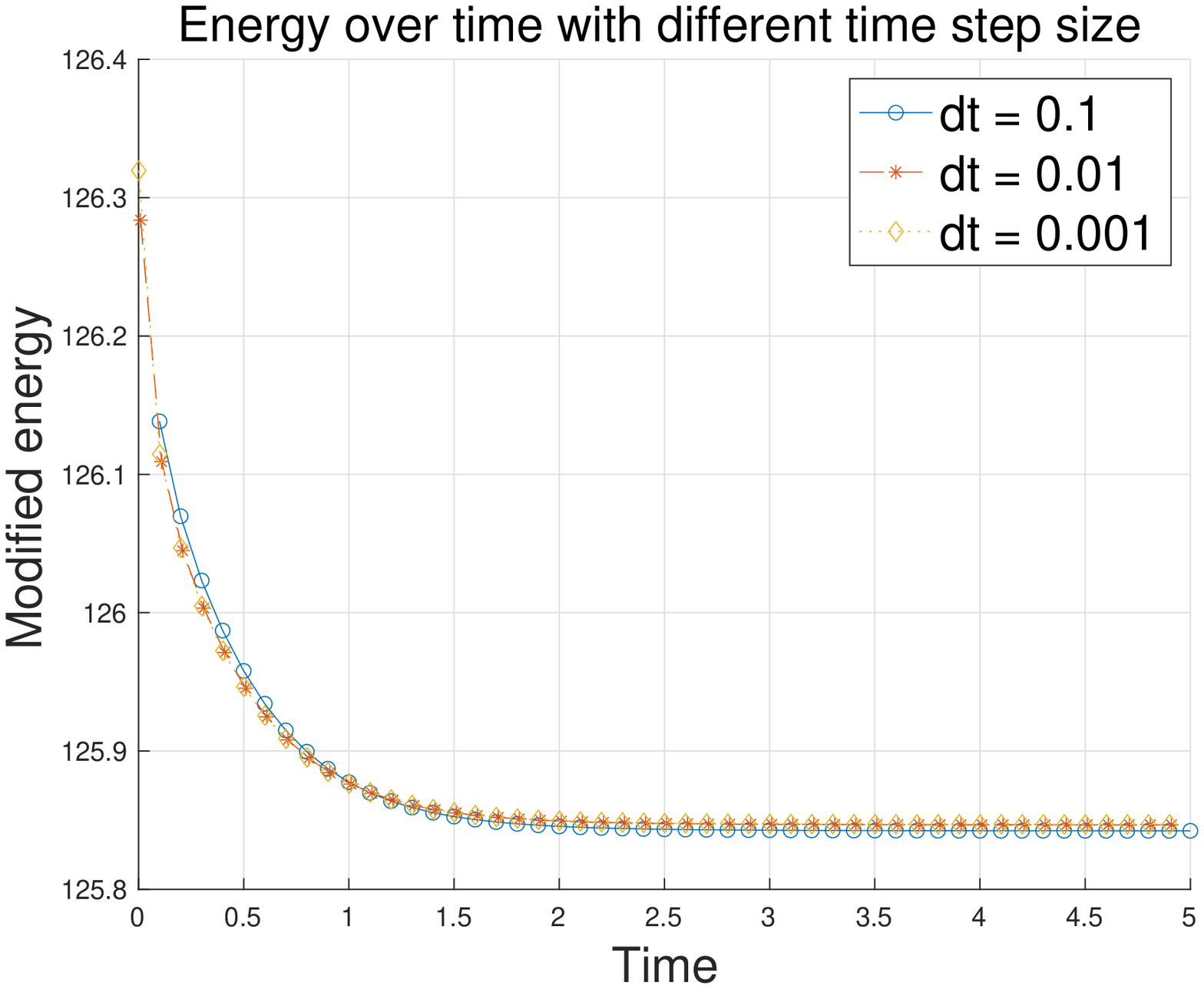}
\caption{Evolution of the modified energy over time under the IEF scheme for Cahn-Hilliard equation, constructed using $g(r)=r^7$}
\label{fig:IEF_CH_Energy_different_dt}
\end{subfigure}\hfill
\begin{subfigure}{0.43\textwidth}
\centering
\includegraphics[width=\textwidth]{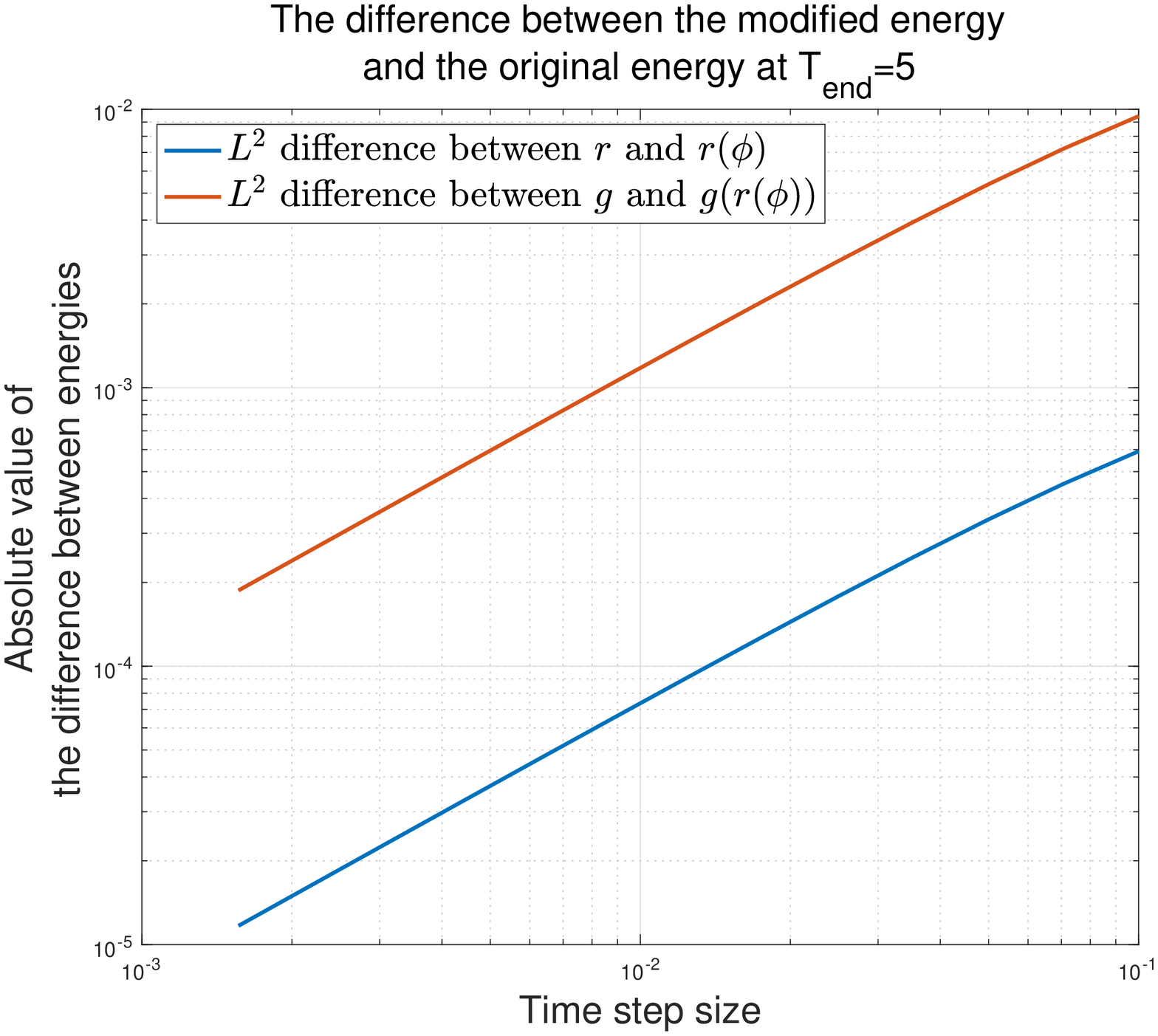}
\caption{Graph depicting $L^2$ norm of $\lvert g-g\left(r(\phi)\right)\rvert$ and $\lvert r-r(\phi)\rvert$ at $T_{end}=5$ for varying time step sizes under the IEF scheme for Cahn-Hilliard equation, constructed using $g(r)=r^7$}
\label{fig:IEF_CH_difference_auxiliary_variable}
\end{subfigure}
\caption{ Numerical results of the IEF scheme applied to the Cahn-Hilliard equation}
\label{fig:energy_modified_energy_difference_IEF}
\end{figure}

\subsection{Coarsening effect}

The coarsening effect \cite{dai2016computational,ju2015fast} refers to the phenomenon where small-scale structures or features in the system tend to merge and form larger-scale structures over time. This effect is related to the system's dynamics of phase separation or pattern formation. Here we will present some experimental results by applying the IEC and the IEF scheme to the Cahn-Hilliard equation to simulate such an effect. We firstly consider a benchmark problem for the corasening effect, for example, see \cite{chen2019fast, liu2022step, yang2020convergence}. The initial condition is set to be 
\begin{equation}
    \label{eq:coarsening_effect_initial_condition1}
    \phi(x,y,0)=-\sum_{i=1}^2\tanh\left( \frac{\sqrt{(x-x_i)^2+(y-y_i)^2}-r_i}{1.2\varepsilon}\right)+1,
\end{equation}
where we choose $(x_1,y_1,r_1)=(\pi-0.7, \pi-0.6, )$ and $(x_2, y_2, r_2)=(\pi+1.65, \pi+1.6, 0.8)$. We apply the IEC scheme formulated by the Softplus function, which is Algorithm \ref{alg:IEC_softplus}, to solve this problem with times step size set to be $0.001$. As we can see from the snapshots presented in Figure \ref{fig:snapshots_CH_IEC}, we observe the coarsening effect as the small circle is generally absorbed by the large circle over the time span $[0,3]$. The fully absorption happens at around $t=2.00s$.

\begin{figure}[ht]
    \centering
    \begin{subfigure}{0.22\textwidth}
        \centering
        \includegraphics[width=\linewidth]{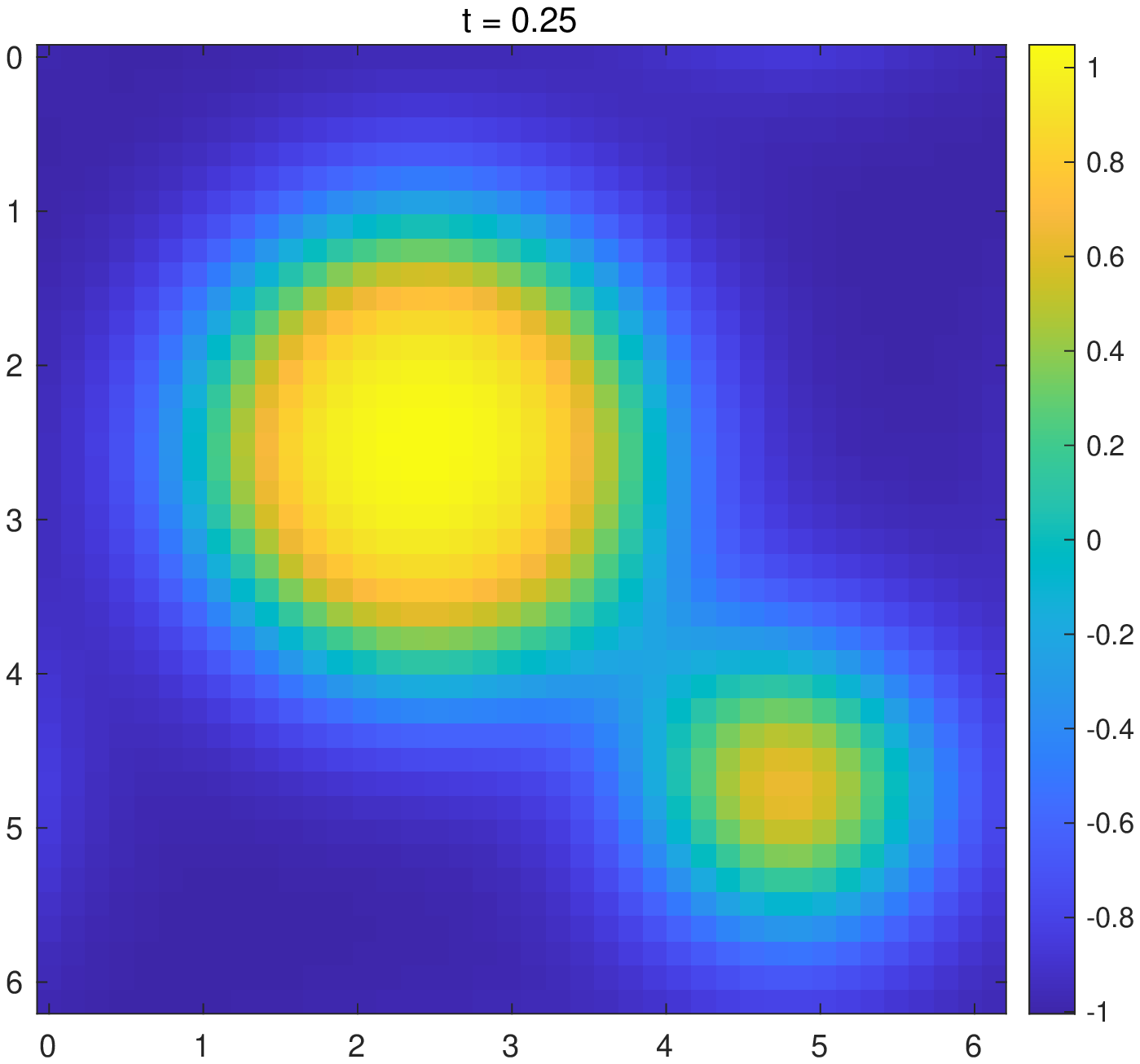}
        \caption*{$t=0.25s$}
    \end{subfigure}%
    \begin{subfigure}{0.22\textwidth}
        \centering
        \includegraphics[width=\linewidth]{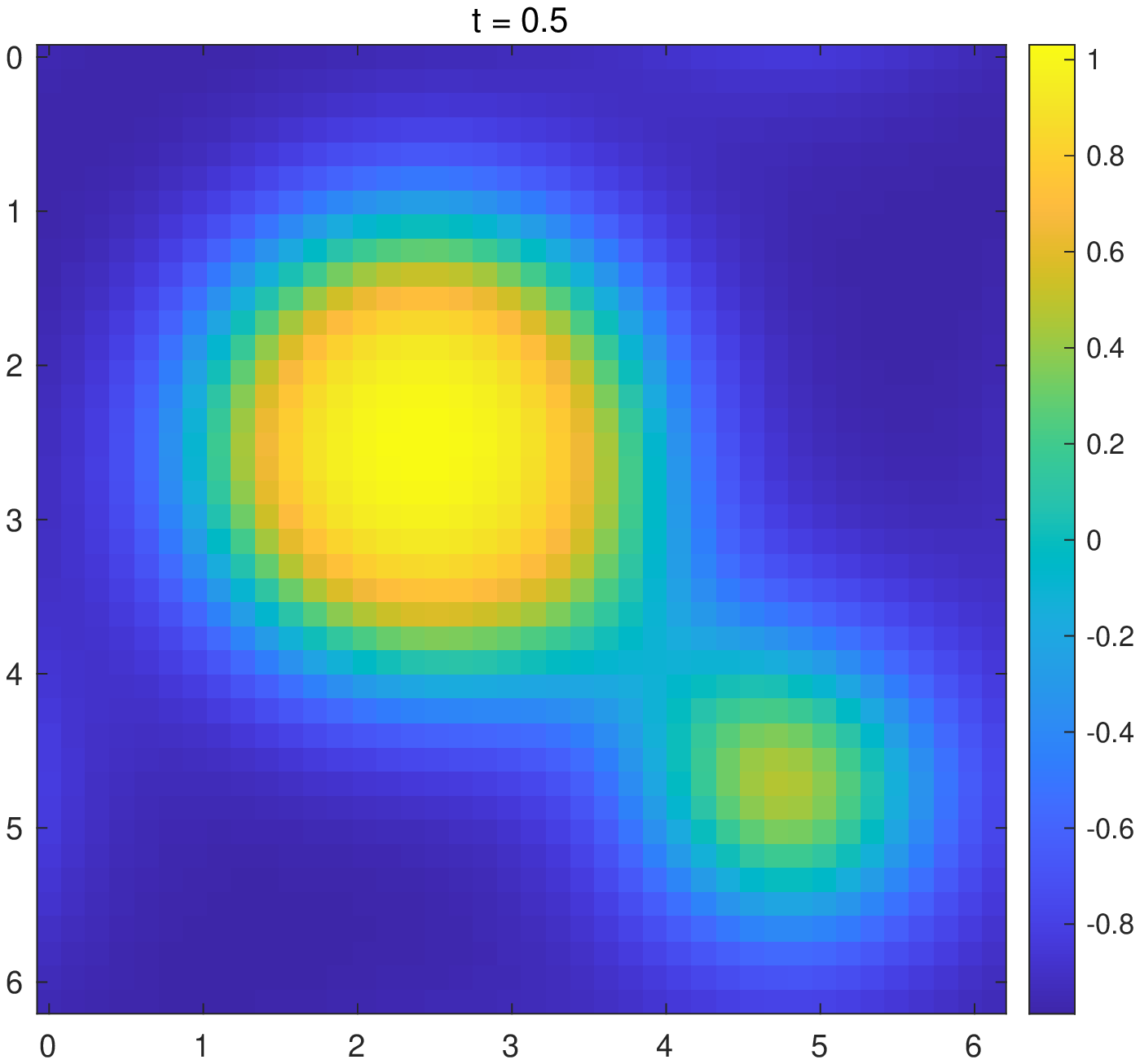}
        \caption*{$t=0.50s$}
    \end{subfigure}
    \begin{subfigure}{0.22\textwidth}
        \centering
        \includegraphics[width=\linewidth]{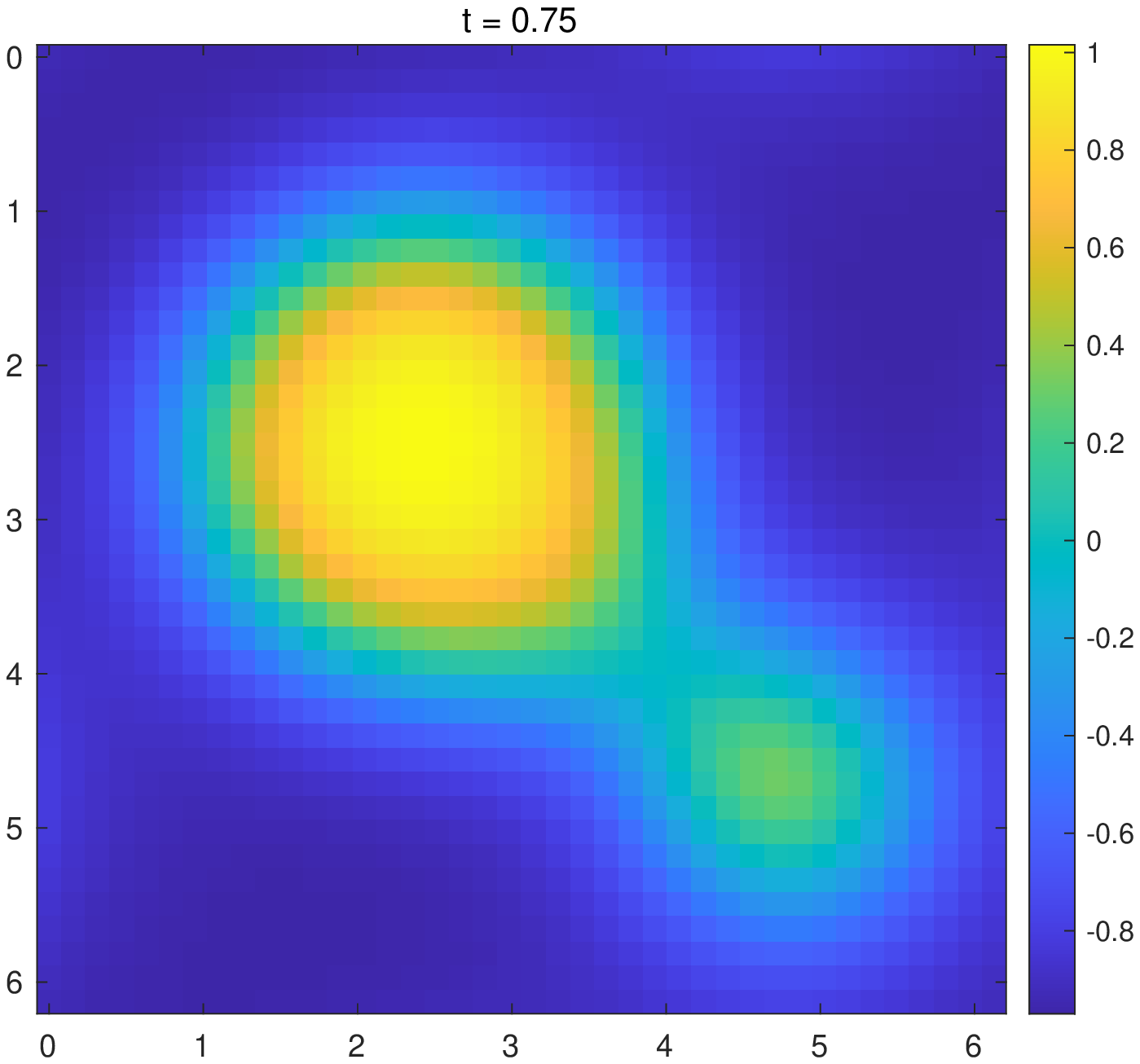}
        \caption*{$t=0.75s$}
    \end{subfigure}
    \begin{subfigure}{0.22\textwidth}
        \centering
        \includegraphics[width=\linewidth]{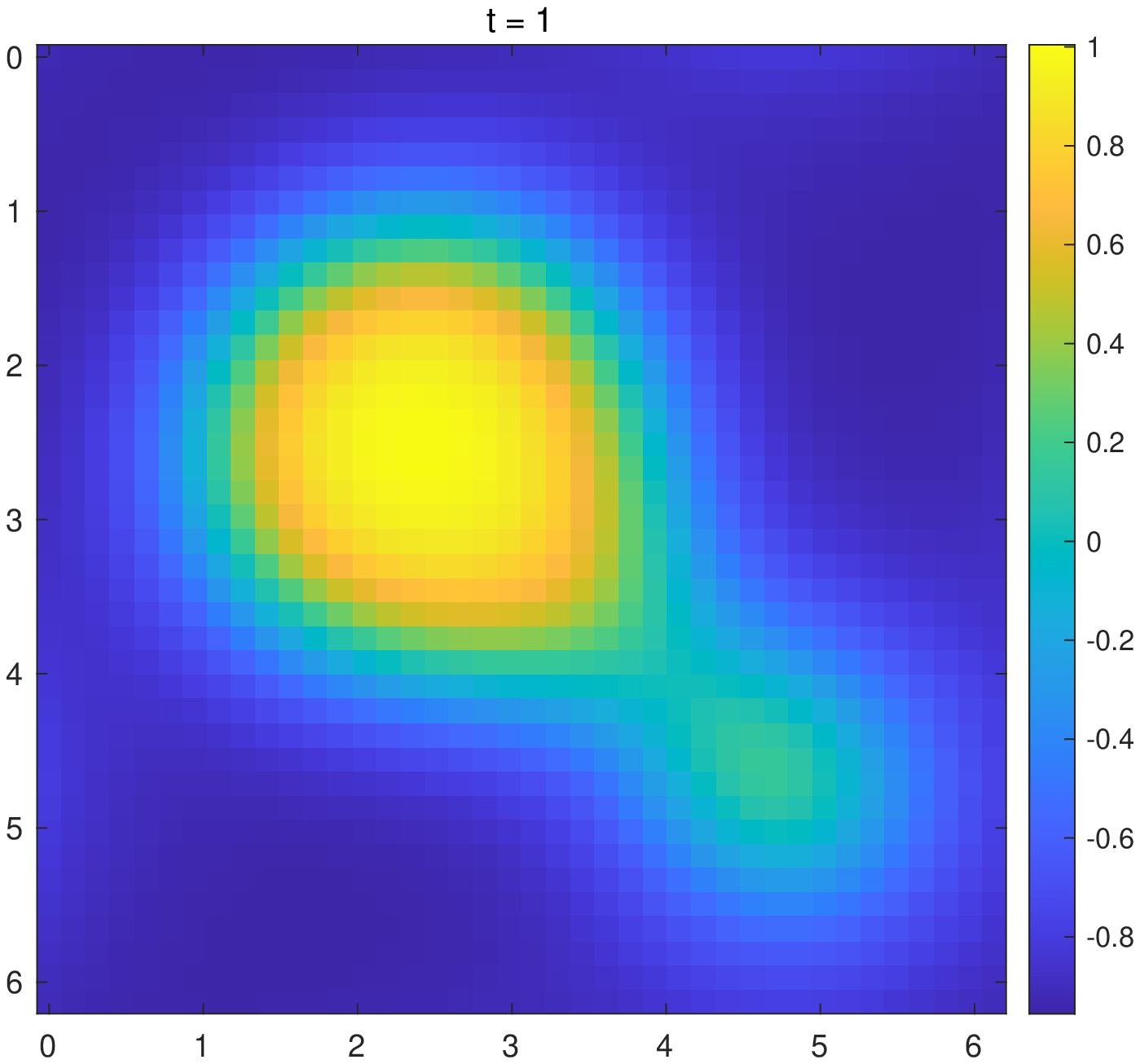}
        \caption*{$t=1.00s$}
    \end{subfigure}
    
    \begin{subfigure}{0.22\textwidth}
        \centering
        \includegraphics[width=\linewidth]{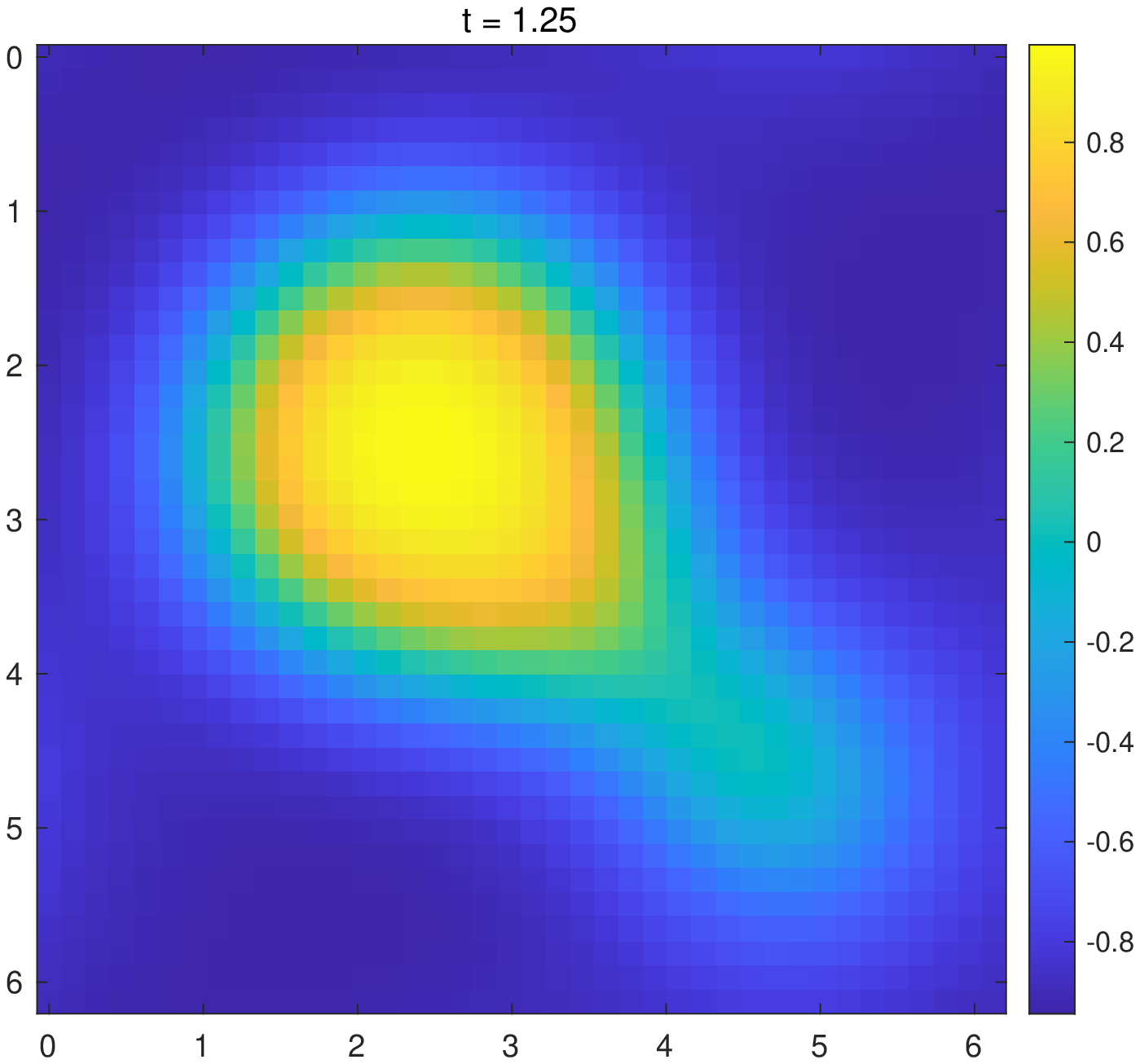}
        \caption*{$t=1.25s$}
    \end{subfigure}%
    \begin{subfigure}{0.22\textwidth}
        \centering
        \includegraphics[width=\linewidth]{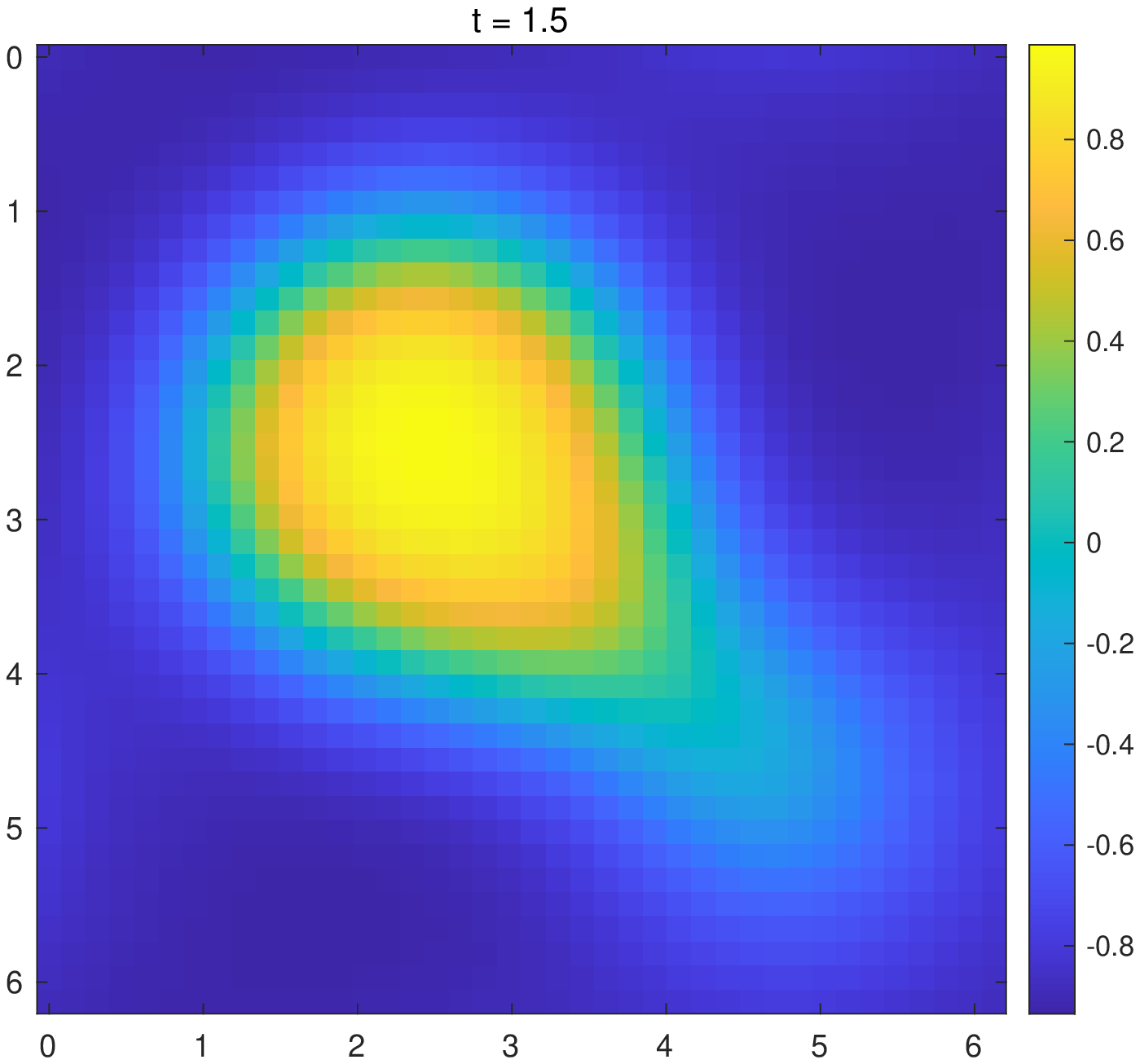}
        \caption*{$t=1.50s$}
    \end{subfigure}
    \begin{subfigure}{0.22\textwidth}
        \centering
        \includegraphics[width=\linewidth]{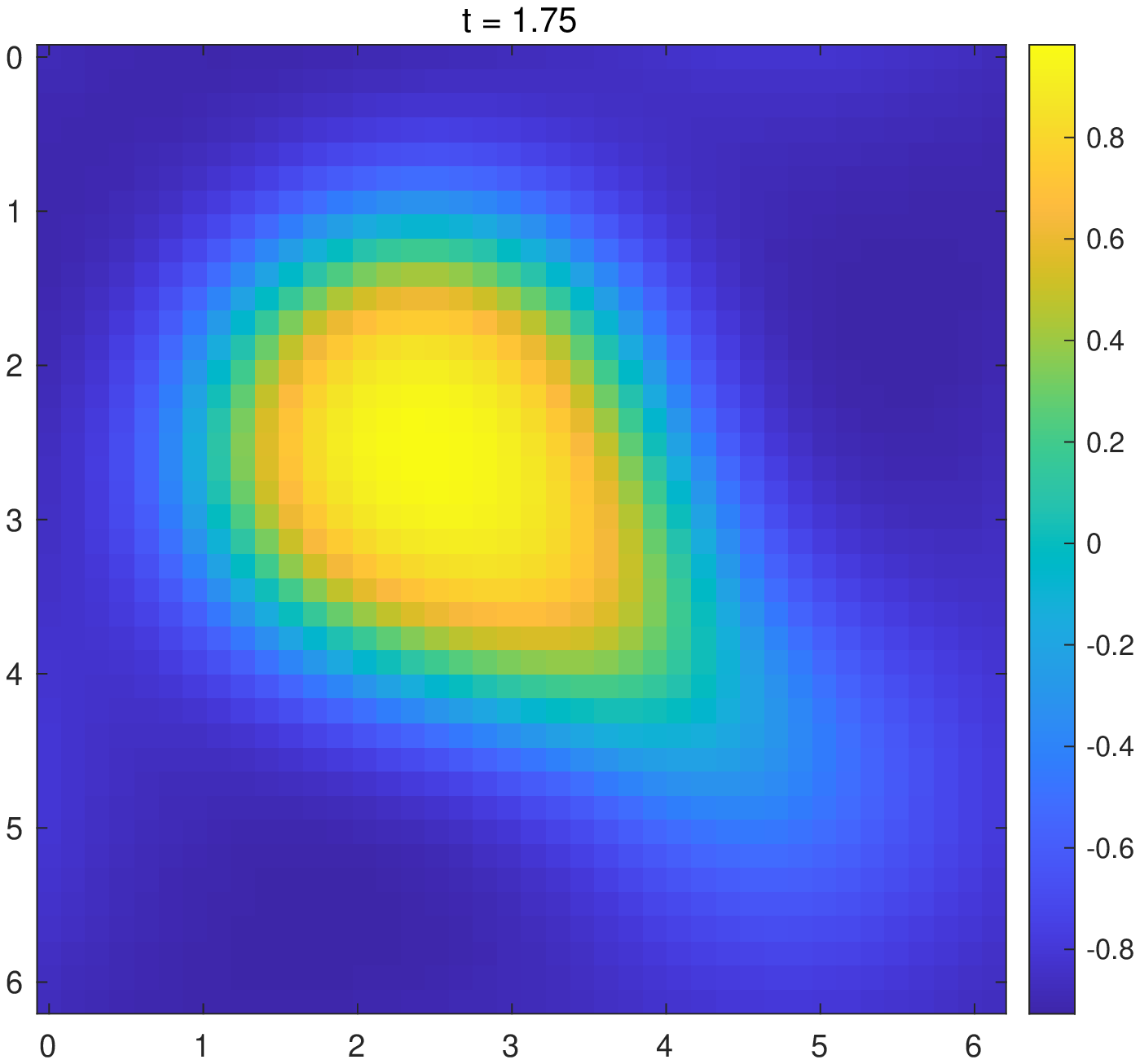}
        \caption*{$t=1.75s$}
    \end{subfigure}
    \begin{subfigure}{0.22\textwidth}
        \centering
        \includegraphics[width=\linewidth]{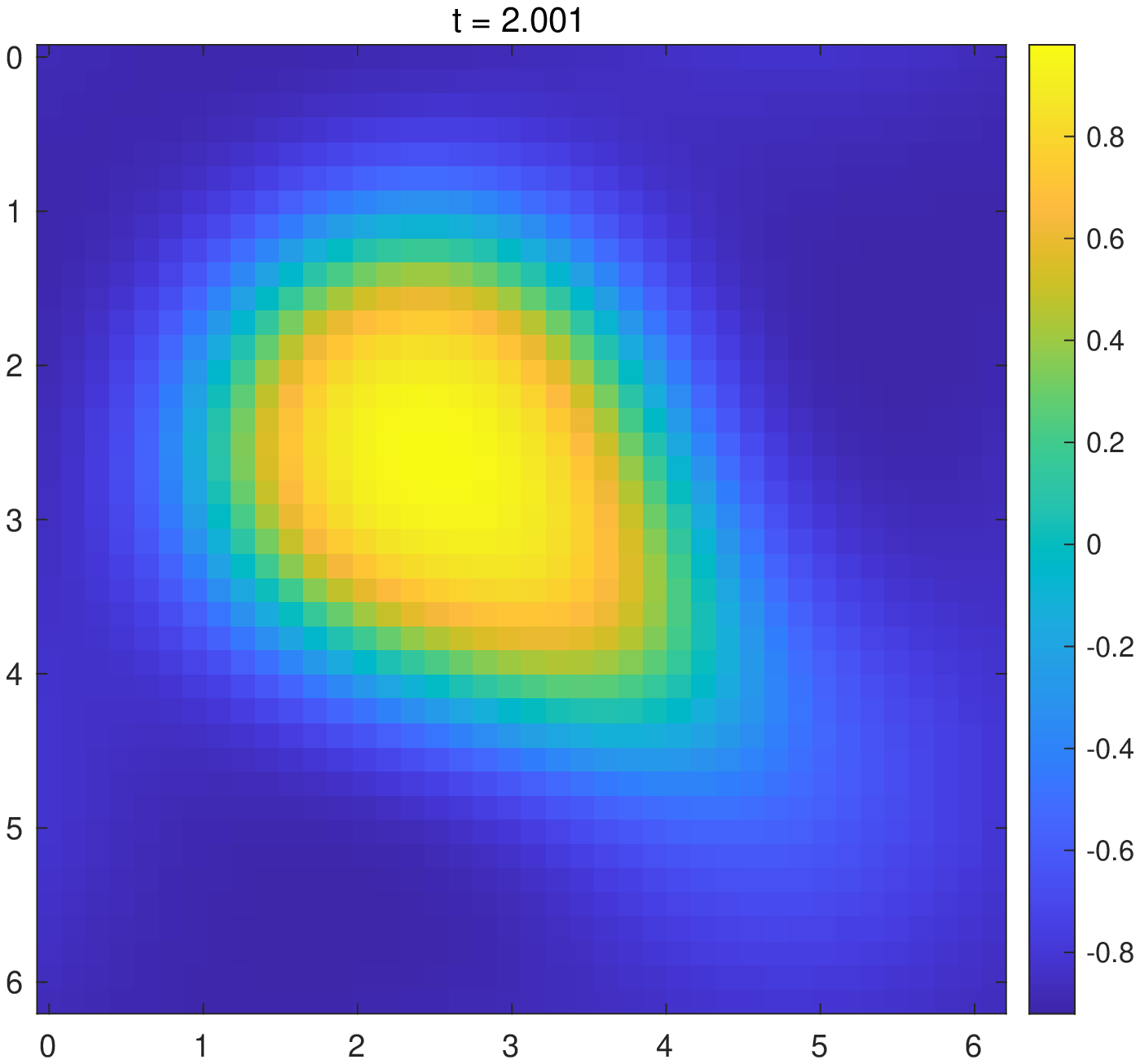}
        \caption*{$t=2.00s$}
    \end{subfigure}

    \begin{subfigure}{0.22\textwidth}
        \centering
        \includegraphics[width=\linewidth]{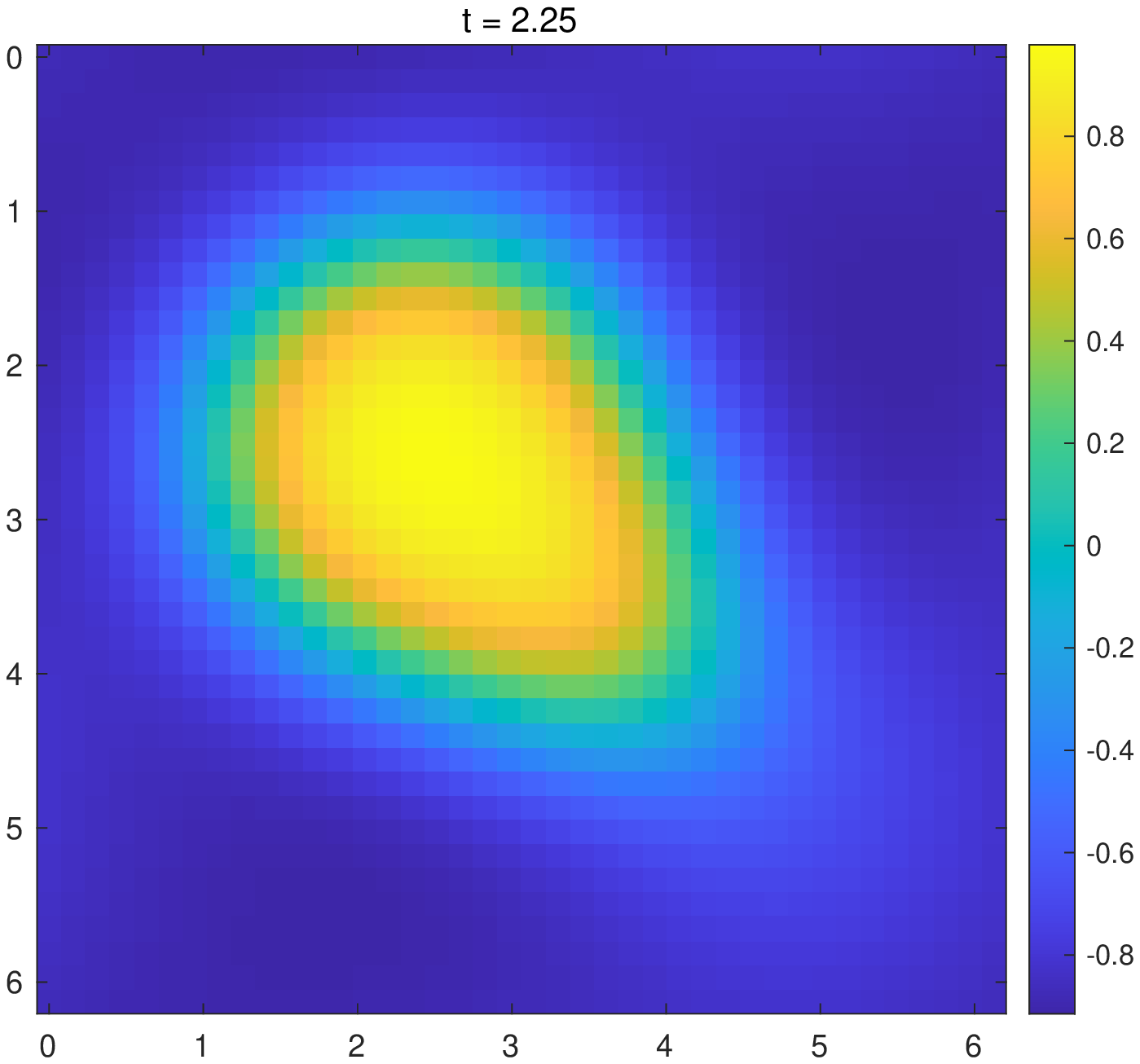}
        \caption*{$t=2.25s$}
    \end{subfigure}%
    \begin{subfigure}{0.22\textwidth}
        \centering
        \includegraphics[width=\linewidth]{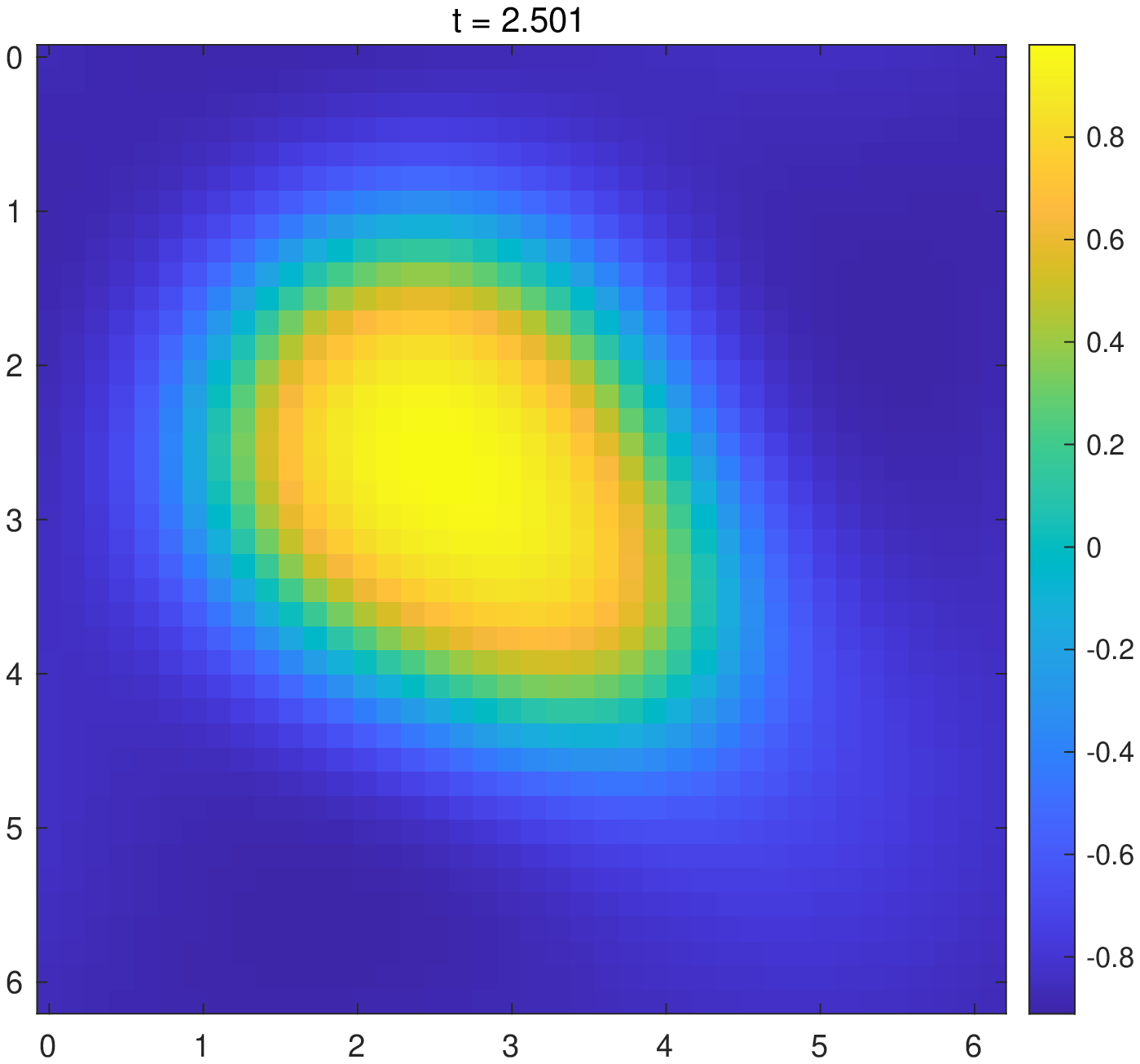}
        \caption*{$t=2.50s$}
    \end{subfigure}
    \begin{subfigure}{0.22\textwidth}
        \centering
        \includegraphics[width=\linewidth]{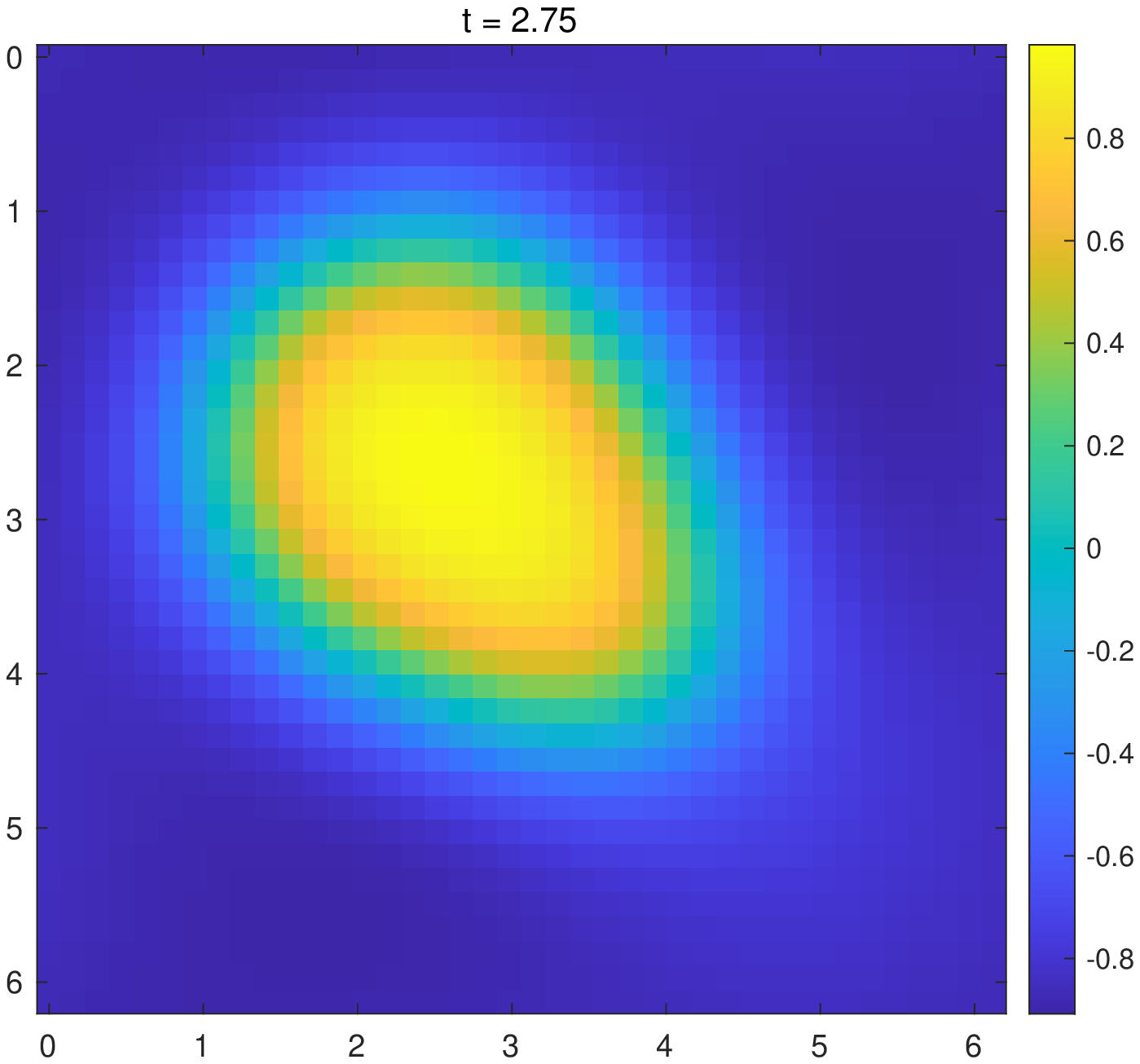}
        \caption*{$t=2.75s$}
    \end{subfigure}
    \begin{subfigure}{0.22\textwidth}
        \centering
        \includegraphics[width=\linewidth]{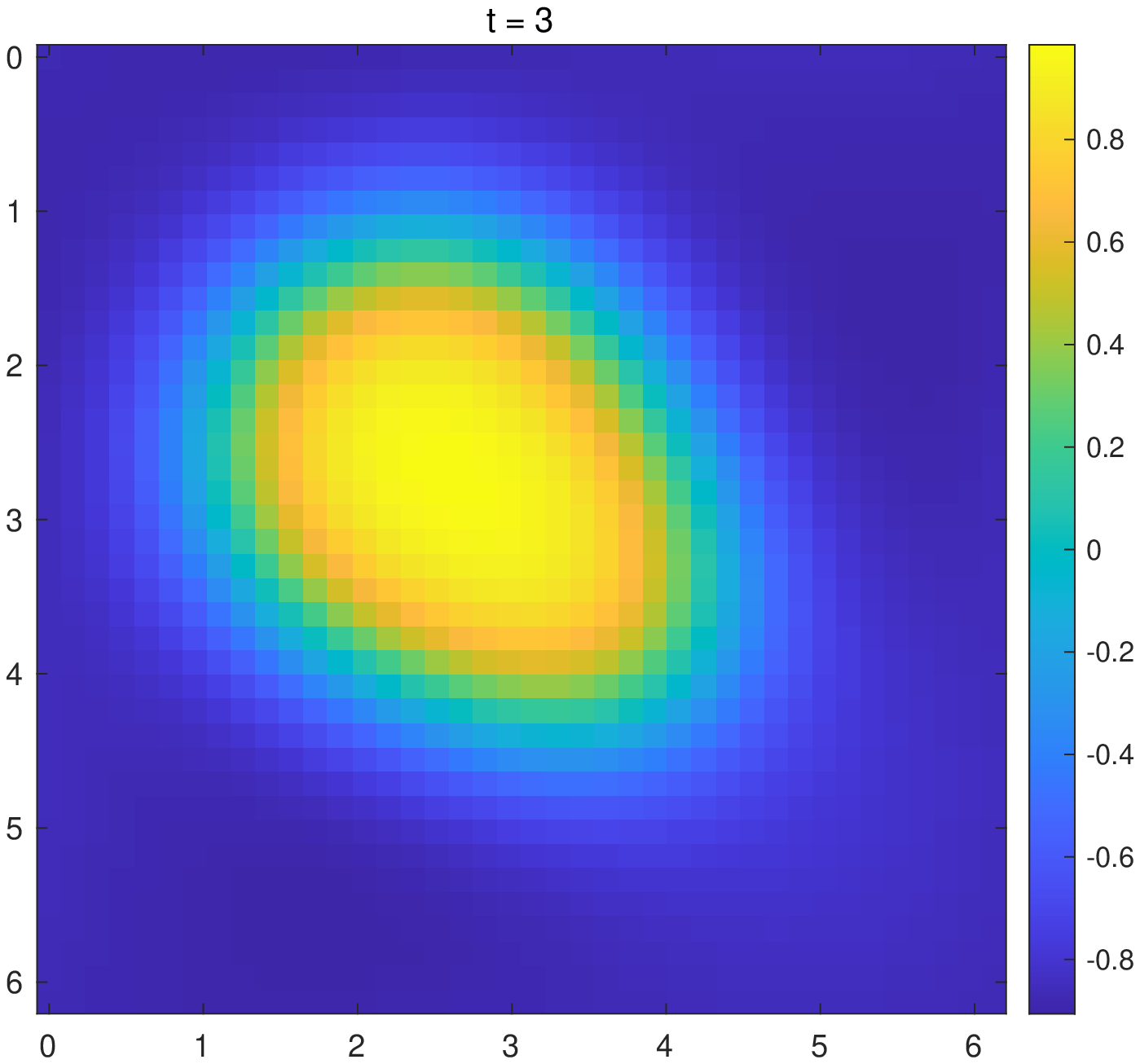}
        \caption*{$t=3.00s$}
    \end{subfigure}

    \caption{Snapshots of $\phi$ taken within the time span $[0,3]$, solving the Cahn-Hilliard equation with initial condition \eqref{eq:coarsening_effect_initial_condition1} by the IEC scheme formulated by the Softplus function with time step size set to be $0.001$.}
    \label{fig:snapshots_CH_IEC}
\end{figure}

Our last exploration entails a new example, which we introduce with the following initial condition:
\begin{equation}
\label{eq:CH_coarsening_effect_rand}
\phi(x,y,0)=0.25+0.4\text{rand}(x,y).
\end{equation}
This numerical experiment reproduces a similar one conducted in \cite{liu2019efficient}. We employ the IEF scheme with the function $g(r)$ set as $r^7$ for our investigations. The time step size adopted is $0.0001$. The dynamic patterns of the mixing of the phase field, starting from a random initial value, can be discerned in Figure \ref{fig:snapshots_CH_IEF}. The observed results, embodying the intriguing interplay of phase transitions, comply nicely with empirical findings.

\begin{figure}
  \centering
  \begin{subfigure}{0.24\textwidth}
    \includegraphics[width=\textwidth]{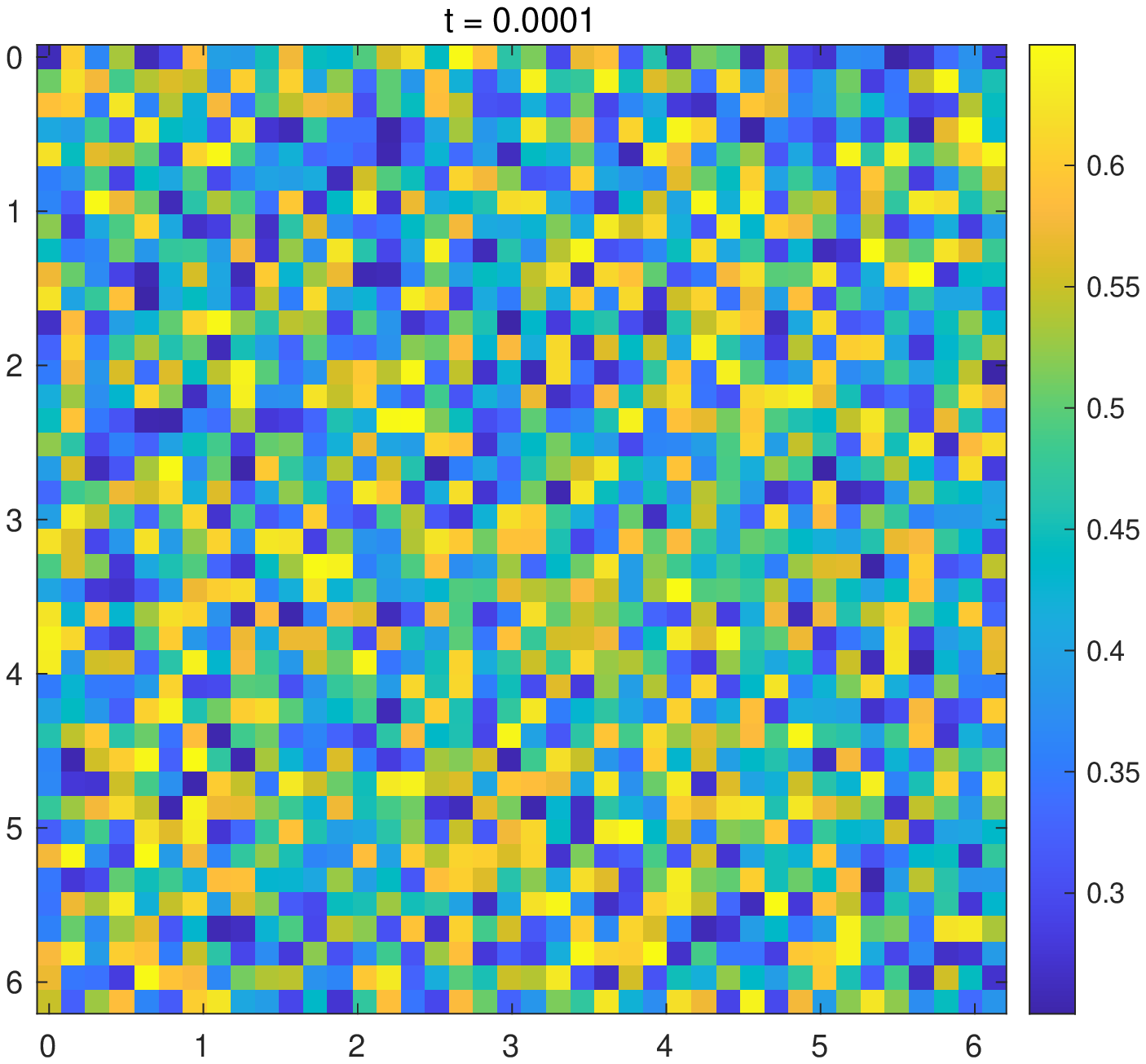}
    \caption{t = 0s}
  \end{subfigure}
  \hfill
  \begin{subfigure}{0.24\textwidth}
    \includegraphics[width=\textwidth]{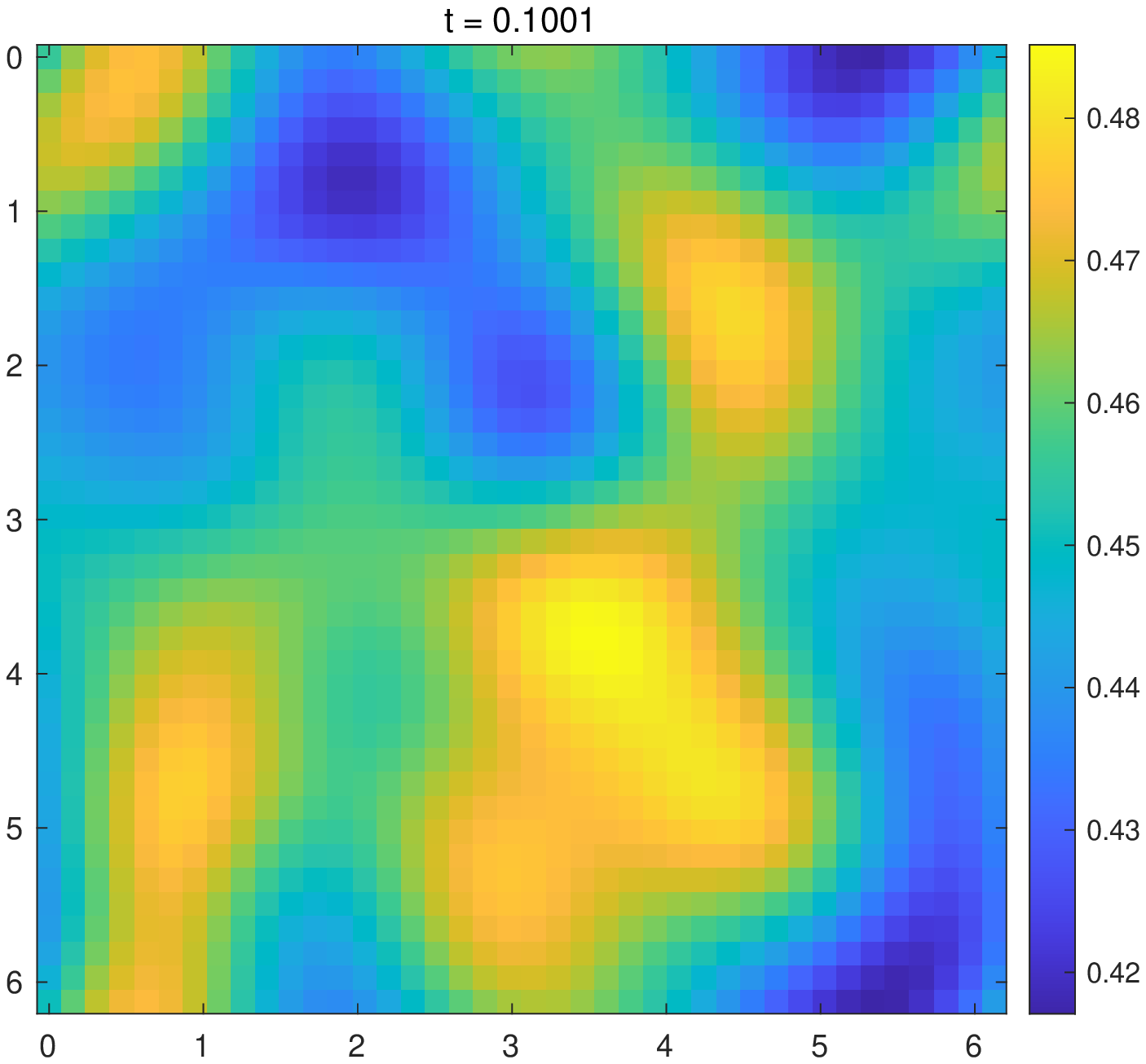}
    \caption{t = 0.1s}
  \end{subfigure}
  \hfill
  \begin{subfigure}{0.24\textwidth}
    \includegraphics[width=\textwidth]{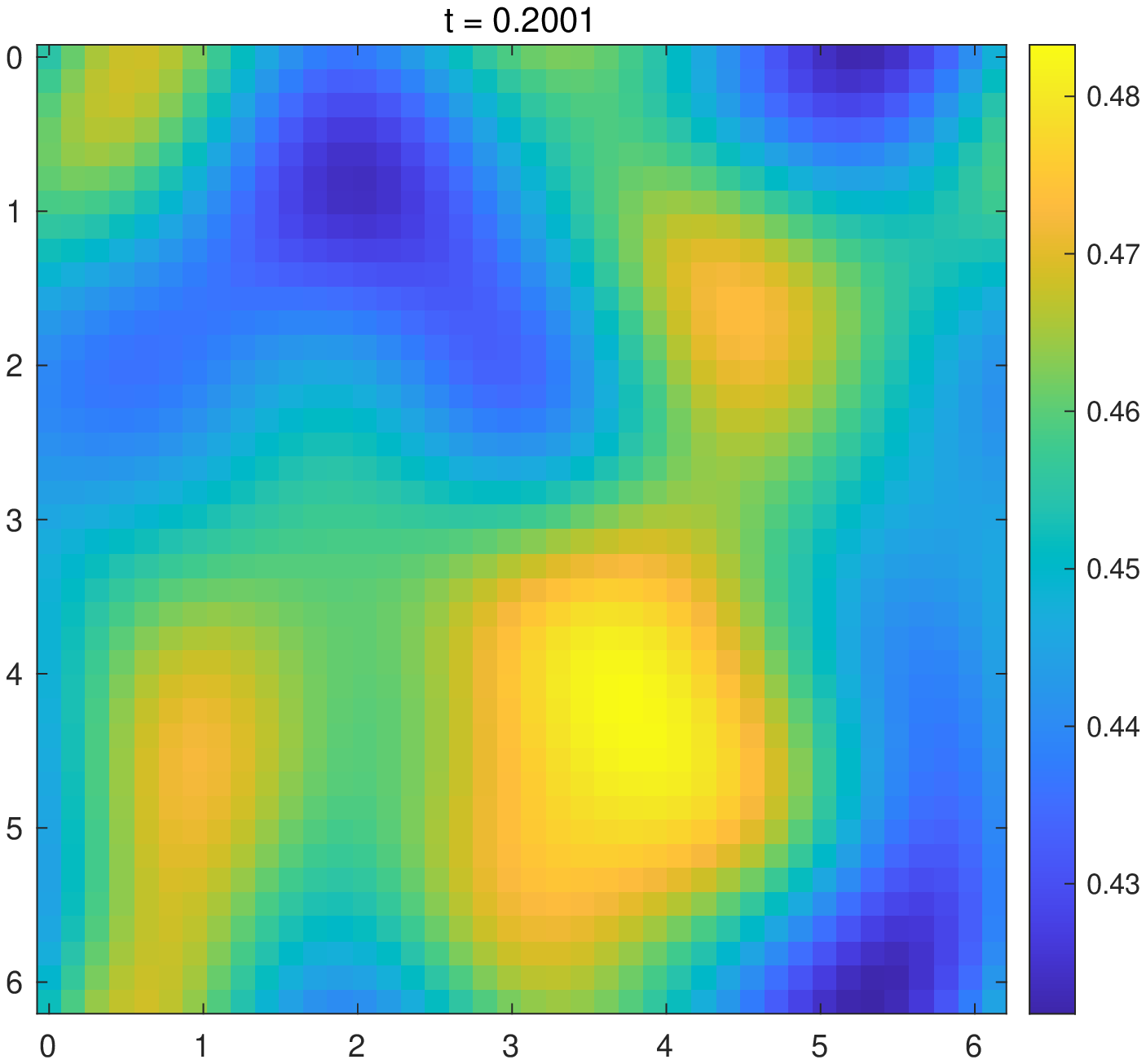}
    \caption{t = 0.2s}
  \end{subfigure}
  \hfill
  \begin{subfigure}{0.24\textwidth}
    \includegraphics[width=\textwidth]{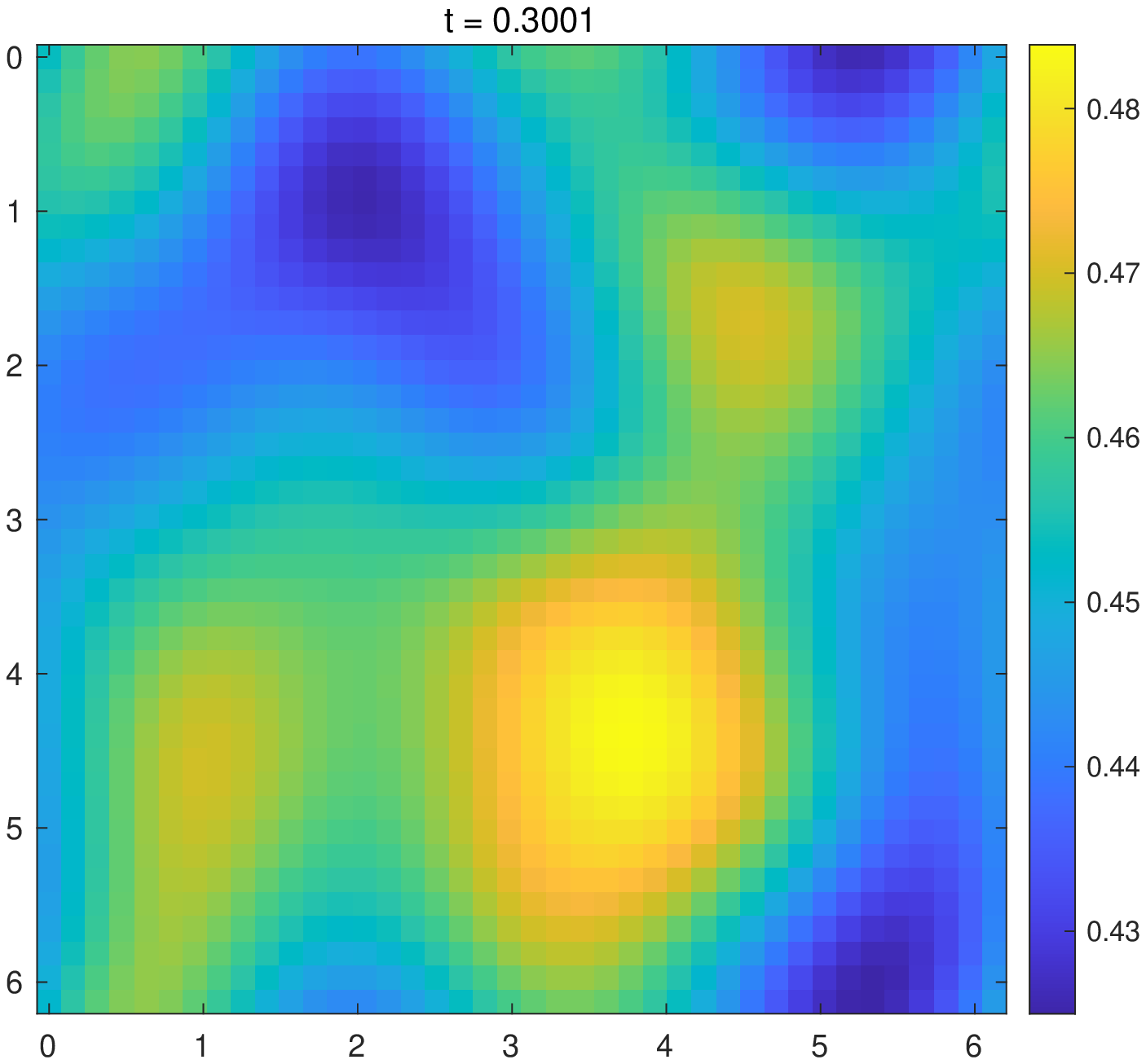}
    \caption{t = 0.3s}
  \end{subfigure}

  \vspace{1em}  

  \begin{subfigure}{0.24\textwidth}
    \includegraphics[width=\textwidth]{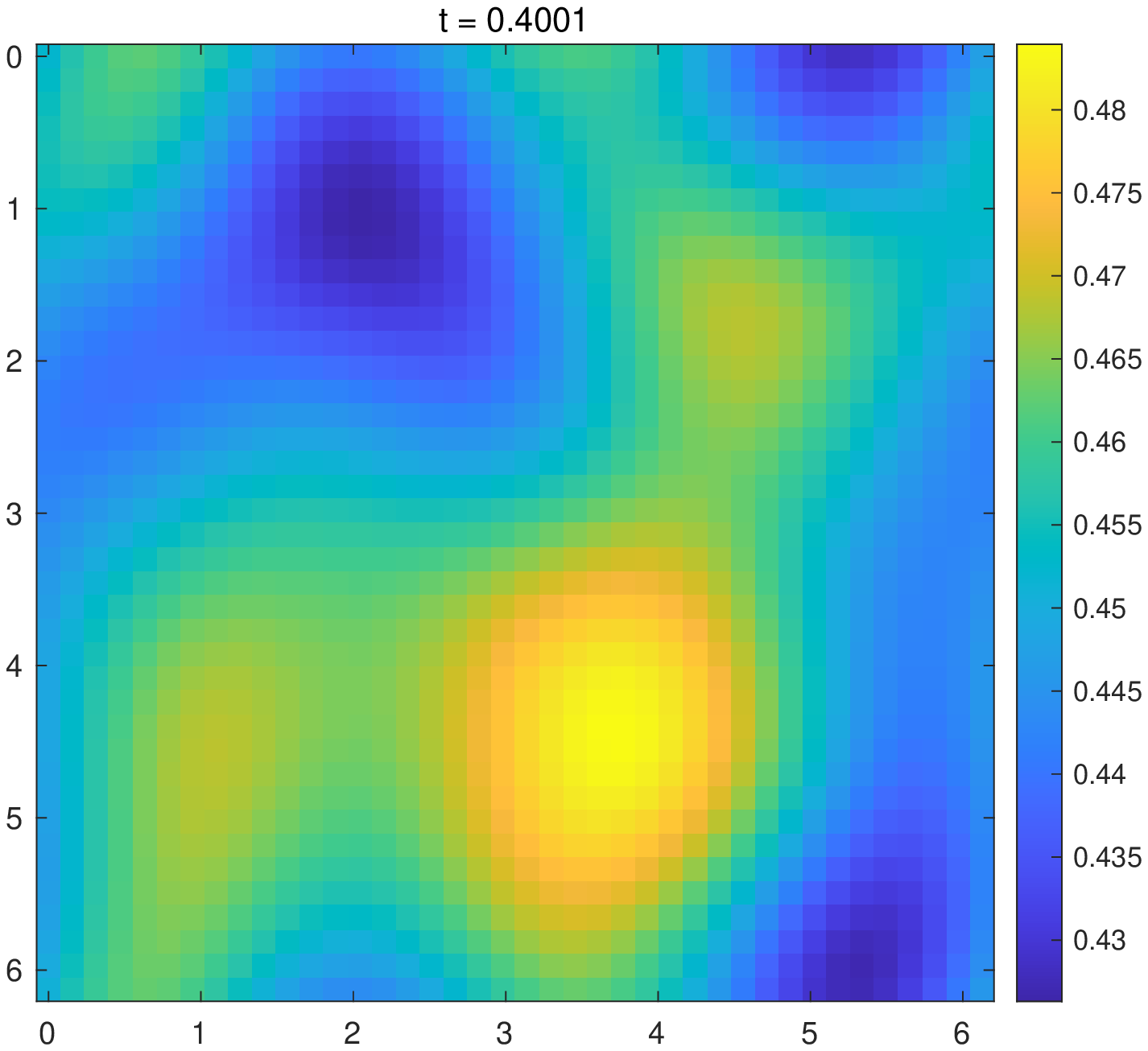}
    \caption{t = 0.4s}
  \end{subfigure}
  \hfill
  \begin{subfigure}{0.24\textwidth}
    \includegraphics[width=\textwidth]{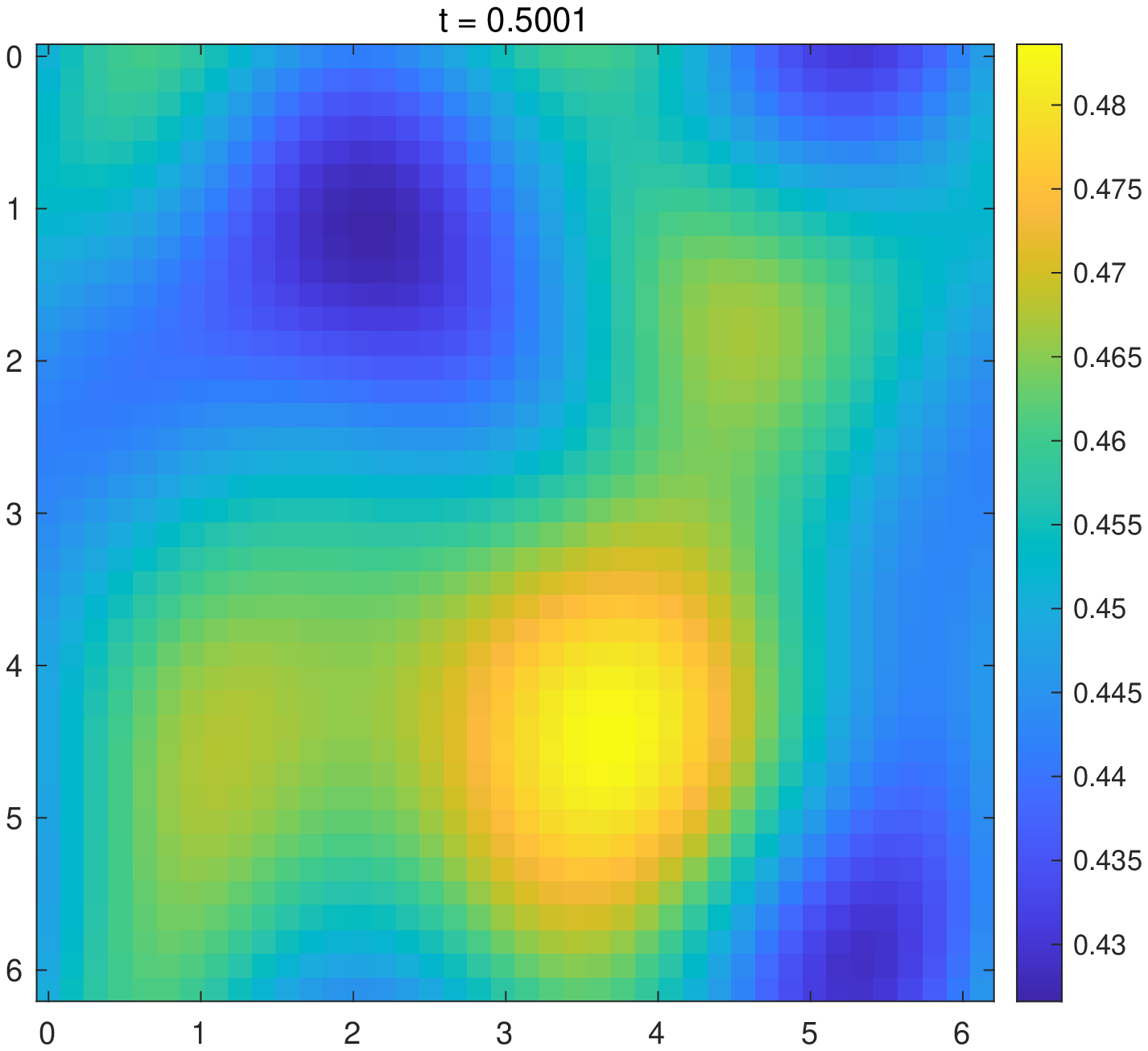}
    \caption{t = 0.5s}
  \end{subfigure}
  \hfill
  \begin{subfigure}{0.24\textwidth}
    \includegraphics[width=\textwidth]{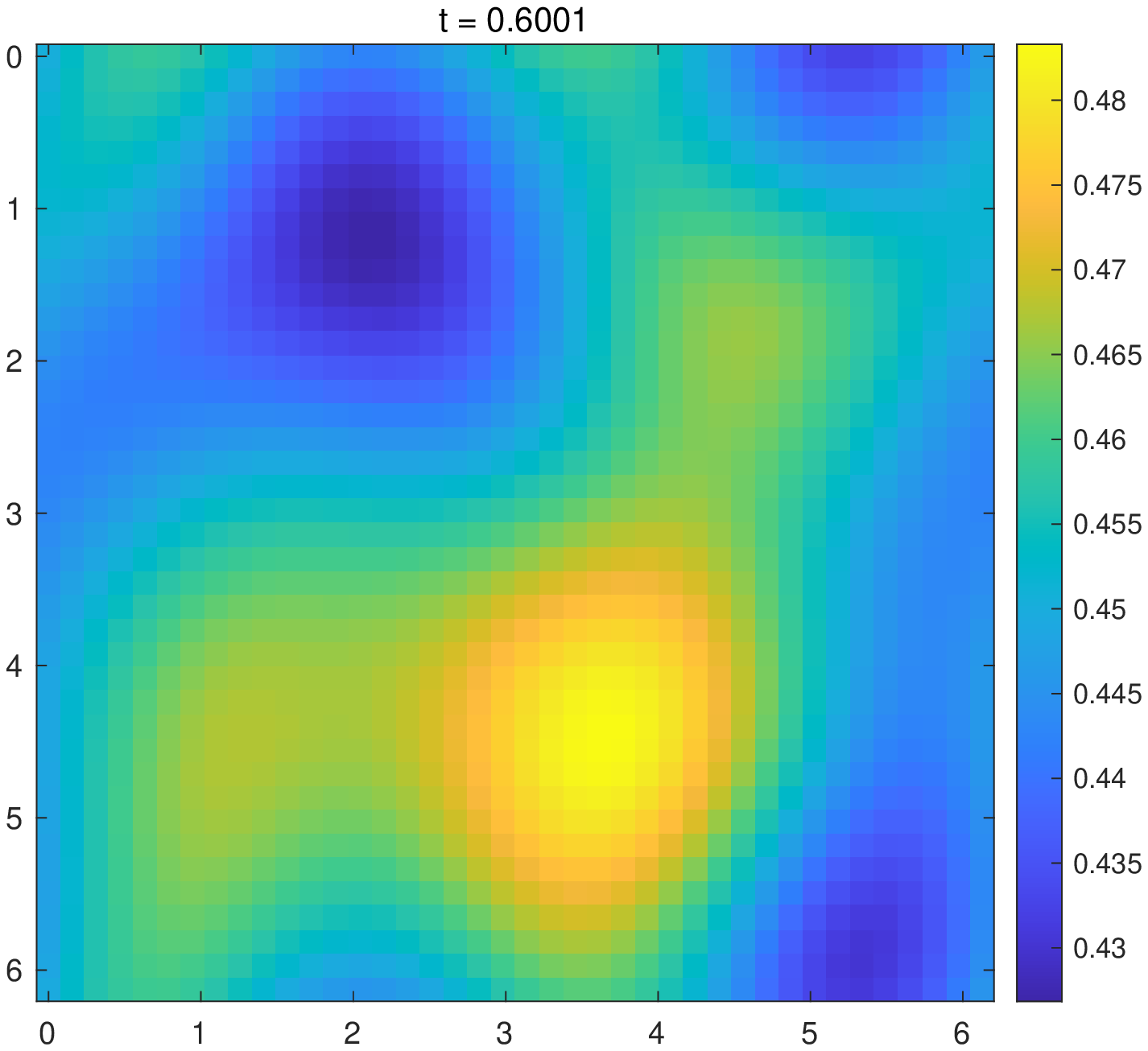}
    \caption{t = 0.6s}
  \end{subfigure}
  \hfill
  \begin{subfigure}{0.24\textwidth}
    \includegraphics[width=\textwidth]{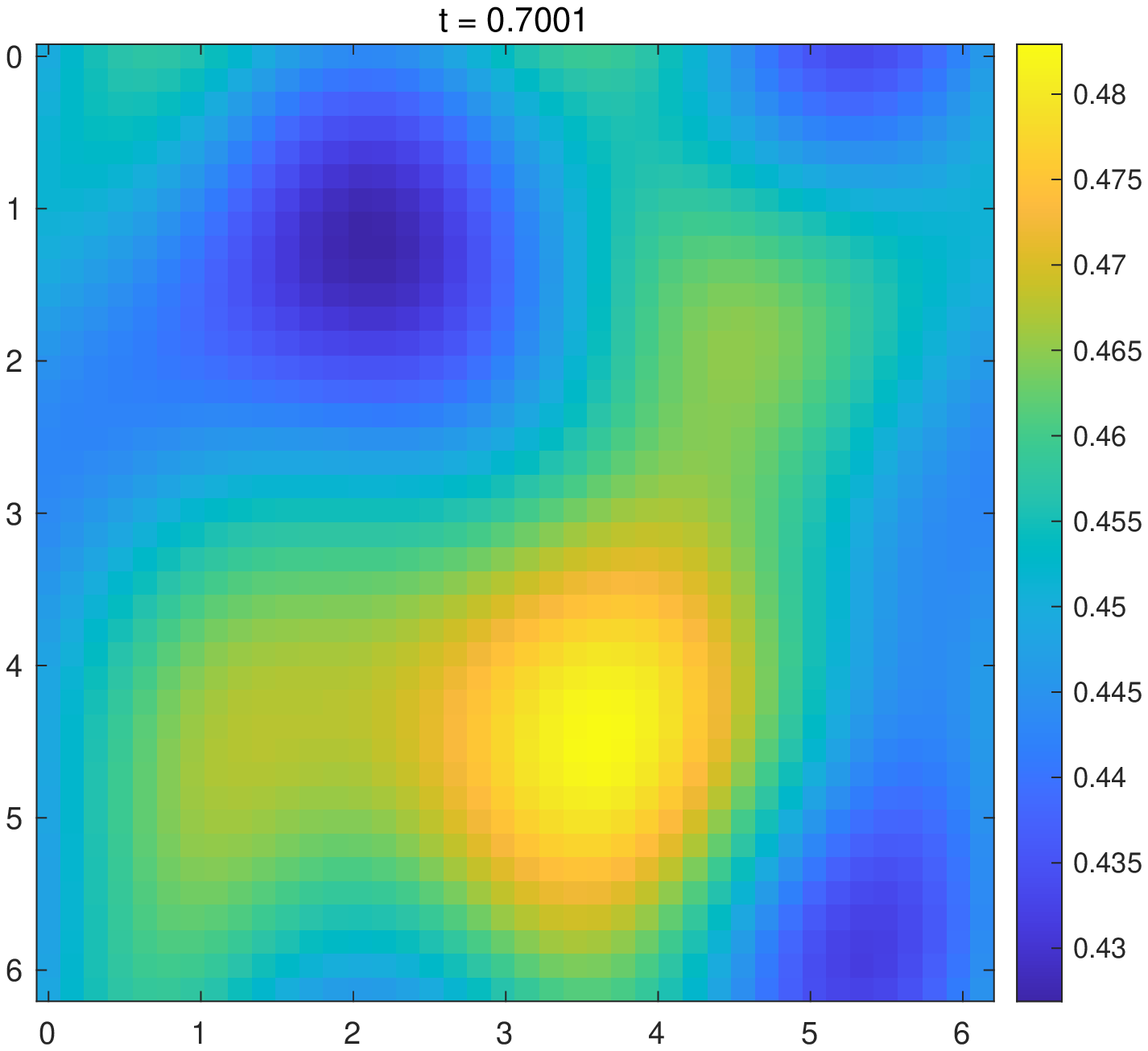}
    \caption{t = 0.7s}
  \end{subfigure}

  \vspace{1em}  

  \begin{subfigure}{0.24\textwidth}
    \includegraphics[width=\textwidth]{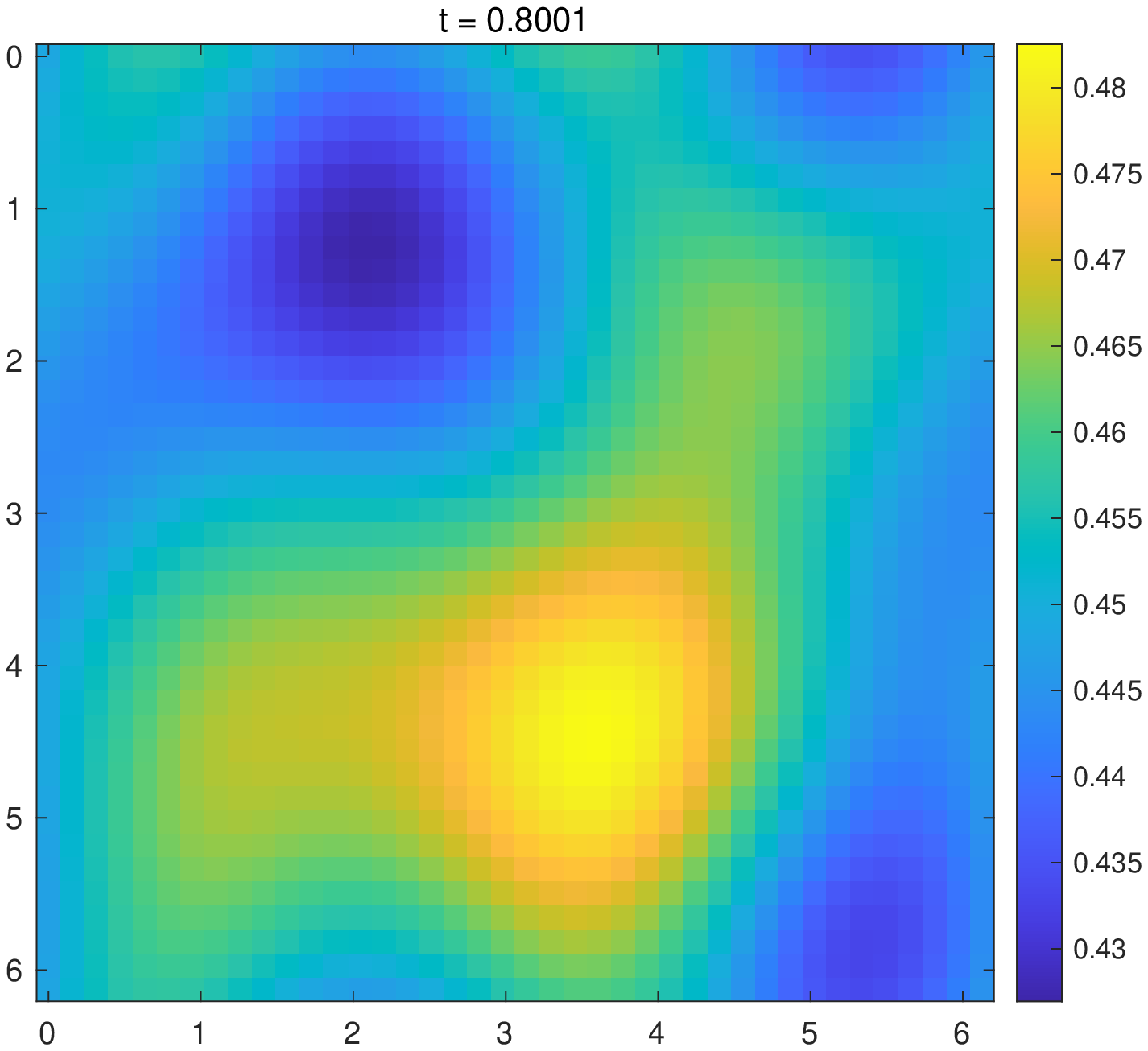}
    \caption{t = 0.8s}
  \end{subfigure}
  \hfill
  \begin{subfigure}{0.24\textwidth}
    \includegraphics[width=\textwidth]{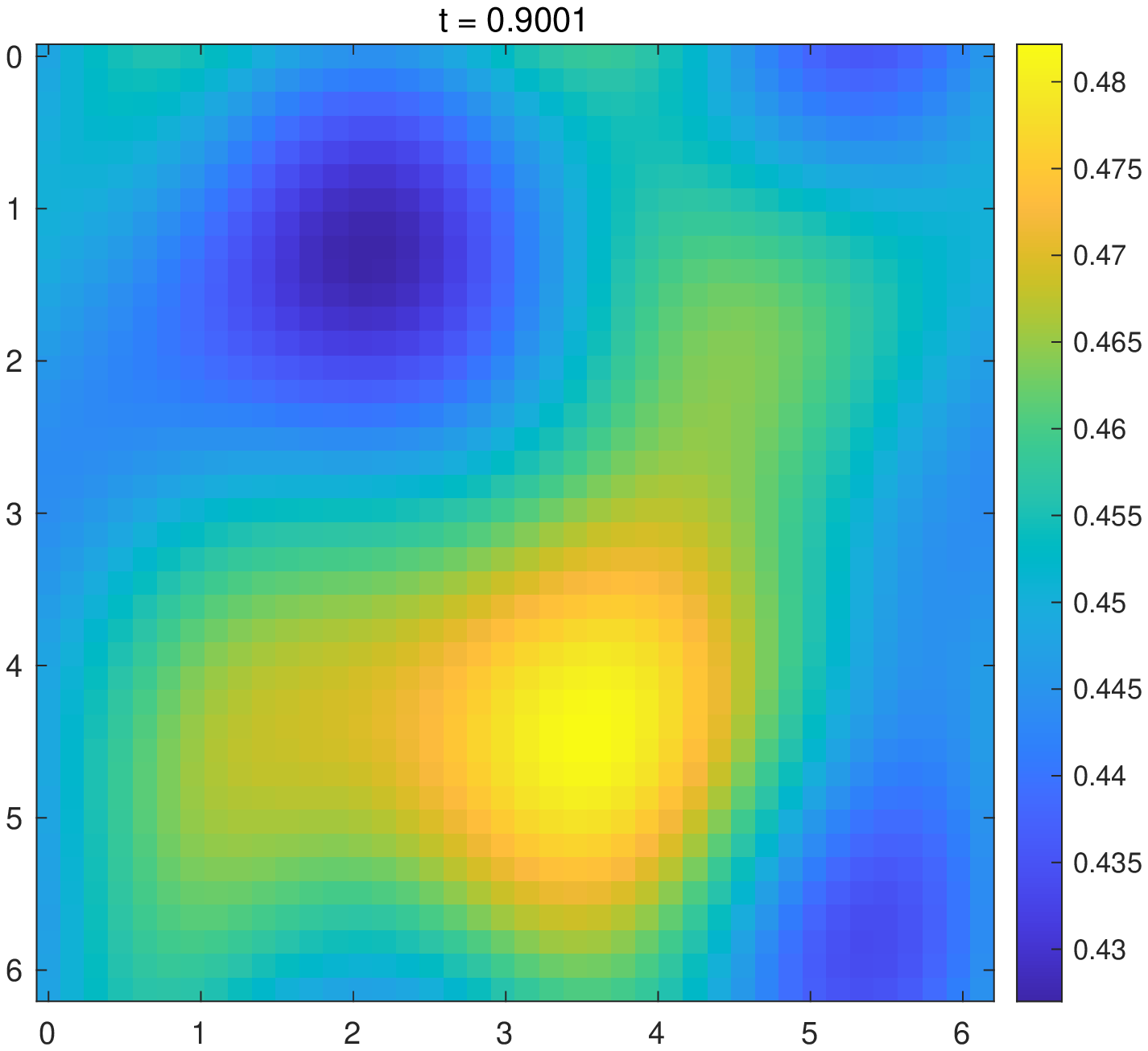}
    \caption{t = 0.9s}
  \end{subfigure}
  \hfill
  \begin{subfigure}{0.24\textwidth}
    \includegraphics[width=\textwidth]{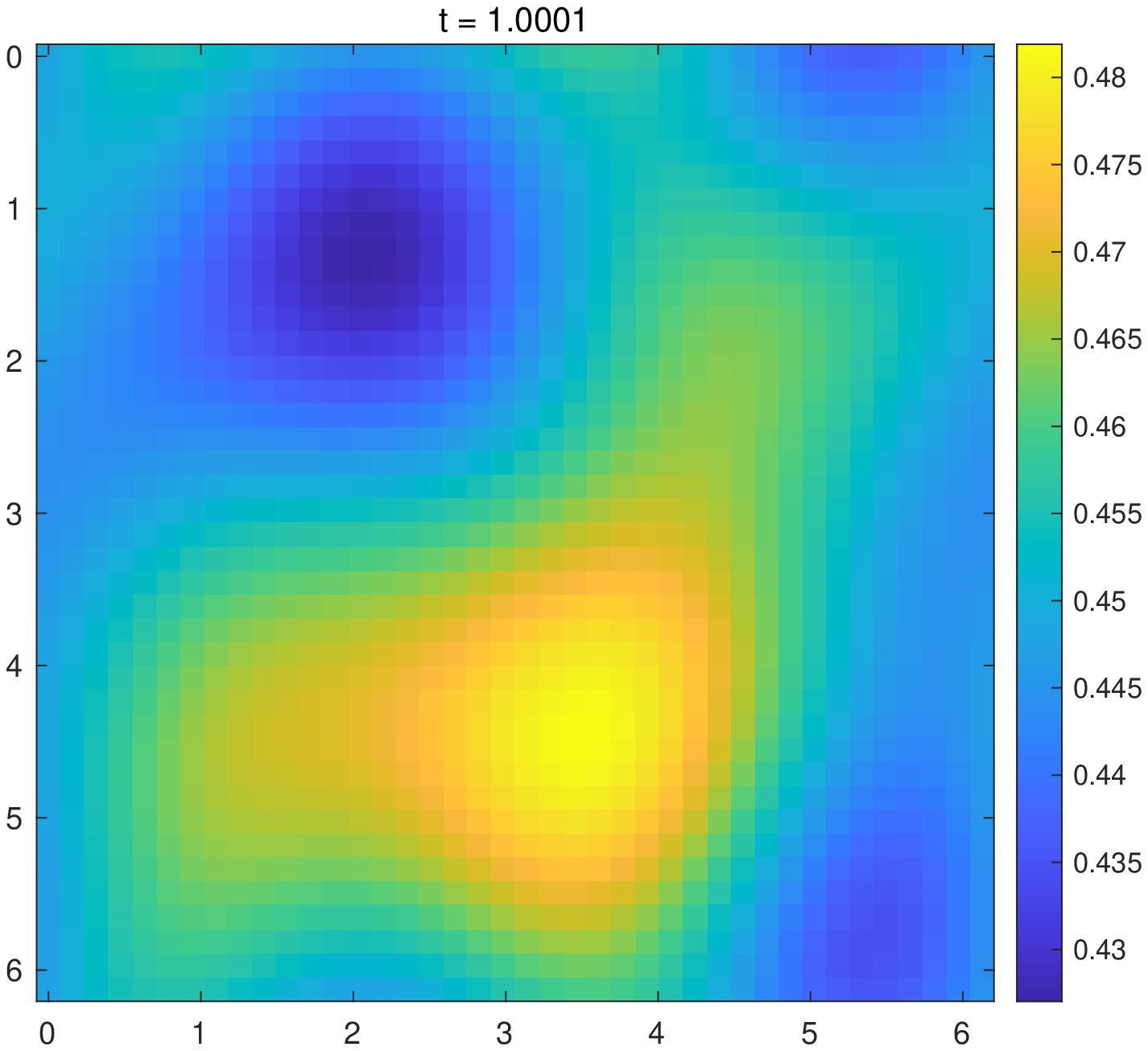}
    \caption{t = 1.0s}
  \end{subfigure}
  \hfill
  \begin{subfigure}{0.24\textwidth}
    \includegraphics[width=\textwidth]{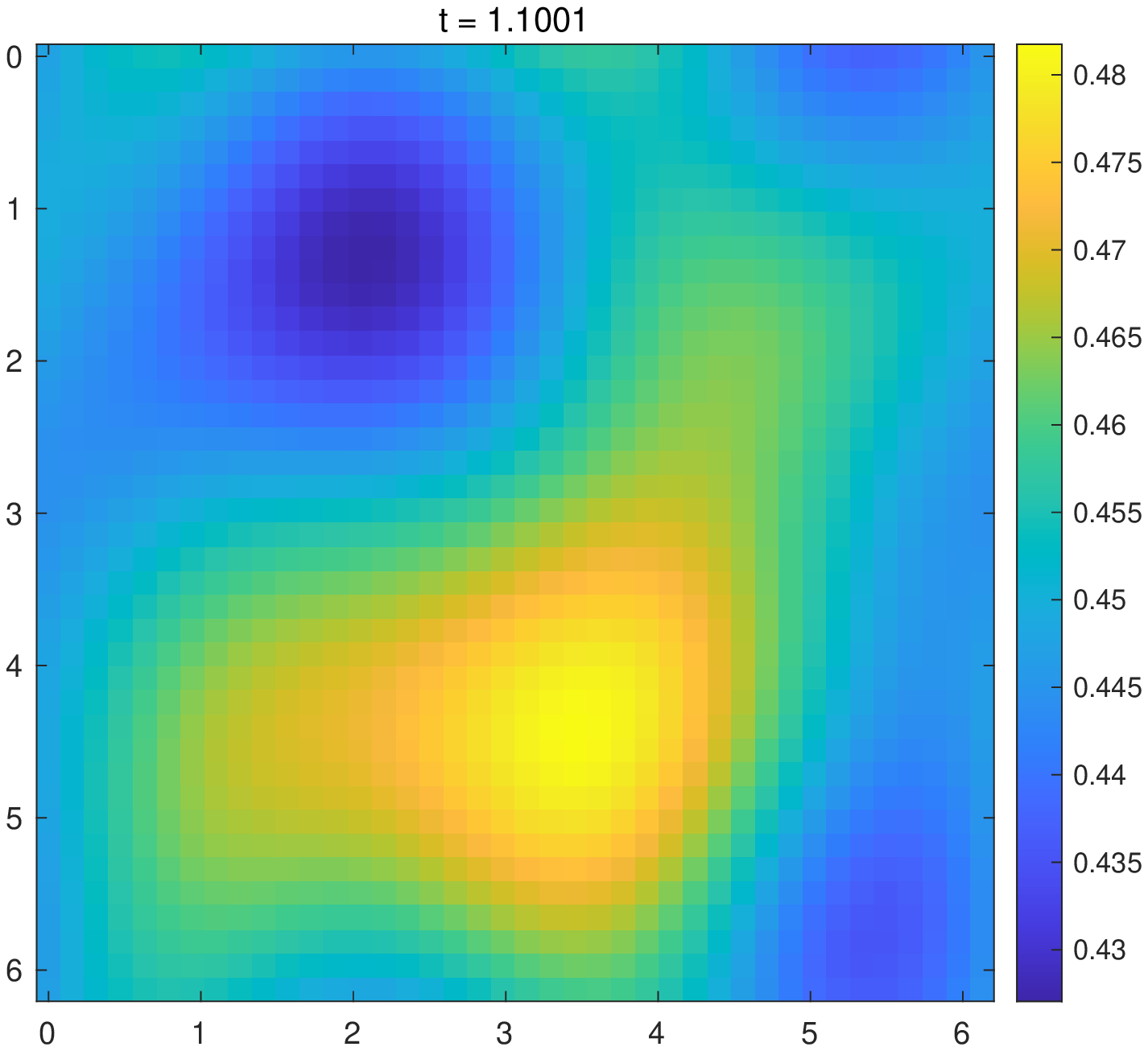}
    \caption{t = 1.1s}
  \end{subfigure}

  \vspace{1em}  

  \caption{Snapshots of $\phi$ taken within the time span $[0,1.1]$, solving the Cahn-Hilliard equation with initial condition \eqref{eq:CH_coarsening_effect_rand} by the IEF scheme formulated by the $g(r) = r^7$ with time step size set to be $0.0001$.}
   \label{fig:snapshots_CH_IEF}
\end{figure}

\section{Extension to the SAV scheme}

In the forthcoming discussion, we present the extensions of the frameworks of the IEC and the IEF scheme onto the Scalar Auxiliary Variable method (SAV). These schemes will share similar properties as our generalization of the IEQ method in the sense that they will also lead to energy-stable linear numerical schemes.

\subsection{A brief review of the SAV method}

The Scalar Auxiliary Variable (SAV) method, initially presented in \cite{shen2018scalar, shen2019new} and subsequently refined \cite{akrivis2019energy,antoine2021scalar, fu2021linear, jiang2022improving, shen2018convergence, shen2020ieq}, is a robust tool analogous to the IEQ method. It is widely employed to construct efficient and accurate time discretization schemes for a broad spectrum of gradient flows. In terms of formulation, it takes a notable resemblance to the IEQ method. Both introduce an auxiliary variable to replace the original energy and then construct an energy-stable linear scheme based on this reformulation. A distinct differential point emerges in the scope of the auxiliary variables utilized by the two methodologies. While the IEQ approach employs a variable over both time and space (represented as $r=\sqrt{F(\phi)+A_1}$ where $F$ is the energy density function outlined in \eqref{eq:gradient_flow_system}), the SAV method utilizes a variable defined solely over time ($r=\sqrt{E_1(\phi)+A_2}$), represented as the square root of $E_1(\phi)$, which is defined as $E_1(\phi)=\int_\Omega F(\phi)\,dx$. $A_2$ here is a constant to ensure the auxiliary variable is well-defined given that $E_1$ is bounded from below, which is necessary for the free energy term to be physically sound. Specifically, as initially outlined in \cite{shen2019new}, a first-order SAV scheme can be carried out as;

\begin{subequations}
\label{eq:SAV_scheme}
\begin{equation}
\label{eq:gf_numer_SAV}
\Dtphi=\mathcal{G} \munew,
\end{equation}
\begin{equation}
\label{eq:munew_SAV}
\munew = -\Delta \phinew+\frac{\rnew}{ \sqrt{E_1(\phiold)}+A_2}F^n,
\end{equation}
\begin{equation}
\label{eq:drdt_SAV}
\rnew-\rold=\frac{1}{c'\left(r(\phi)\right)}\int_\Omega F^n(\phinew-\phiold)\,dx.
\end{equation}
\end{subequations}
We take $F^n = F(\phiold)$ here. 
Various numerical experiments have shown the efficiency and practicability of this method. In addition, as it has been stated in \cite{shen2018scalar, shen2019new}, the SAV methodology inherits all the advantages of the IEQ method and additionally enjoys the benefits of a simpler implementation and the requirement for only $E_1(\phi)$ to be bounded from below (instead of the need for $F(\phi)$ also to be bounded from below). Given the commonality in the core construction principles of these methods, it is feasible to transpose our ideas onto the SAV method, enabling further generalizations. We will present it in the following.

\subsection{The C-SAV method}

 In the following, we will propose a variant of the standard SAV method, the Convex Scalar Auxiliary Variable method, abbreviated as C-SAV. Guided by the parallel adaptations we made for the IEC method, a natural proposition is to supplant $r^2$ with an alternative auxiliary function. In this context, we will adhere to the same assumptions we have put forth for the IEC scheme with respect to the auxiliary function $c$; namely, $c:\mathbb{R}\to\mathbb{R}$ is taken as a smooth, L-smooth, and convex function exhibiting a monotonic increase, with $\mathbb{R}^+$ included within the function's range. Then by introducing a scalar auxiliary variable $r(t)$ such that
 \begin{equation*}
     c\left(r(t)\right)=E_1(\phi)+A_2,
 \end{equation*}
system \eqref{eq:gradient_flow_system} can be reformulated as
\begin{subequations}
    \label{eq:gradient_flow_reformulated_system_C-SAV}
    \begin{equation}
        \label{eq:phi_t_reformulated}
        \frac{\partial \phi}{\partial t} = \mathcal{G}\mu,
    \end{equation}
    \begin{equation}
        \label{eq:mu_reformulate}
        \mu = -\Delta \phi + \frac{c'\left(r(t)\right)}{c'\left(c^{-1}\left(E_1(\phi)+A_2\right)\right)}\frac{\delta F}{\delta \phi},
    \end{equation}
    \begin{equation}
        \label{eq: r_t_reformulate}
        r_t = \frac{1}{c'\left(c^{-1}\left(E_1(\phi)+A_2\right)\right)}\int_\Omega\frac{\delta F}{\delta \phi}\phi_t,
    \end{equation}
\end{subequations}

 From this, we propose the following variant of the IEC scheme, through the application of the same conceptual framework to the SAV method:
\begin{subequations}
\label{eq:C_SAV_scheme}
    \begin{equation}
    \label{eq:gf_numer_CSAV}
        \Dtphi=\mathcal{G} \munew,
    \end{equation}
    \begin{equation}
    \label{eq:munew_CSAV}
        \munew = -\Delta \phinew+\frac{c'(\rold)+\alpha L(\rnew-\rold)}{ c'\left(r(\phiold)\right)}F^n,
    \end{equation}
    \begin{equation}
    \label{eq:drdt_CSAV}
        \rnew-\rold=\frac{1}{c'\left(r\left(\phiold\right)\right)}\int_\Omega F^n(\phinew-\phiold)\,dx.
    \end{equation}
\end{subequations}
By defining modified energy as $E_{CS}^n = \frac{1}{2}|\nabla \phiold|^2+c(\rold)$, we can deduce, through parallel arguments to Theorem \ref{thm:IEC_energy_stability}, that this discrete energy demonstrates a monotonic decrease. We can similarly extend the IEF formulation to the SAV methodology, thereby constructing linear, energy-stable numerical schemes. We omit the details for the sake of brevity, but we encourage interested readers to explore this subject further. 

\section{Concluding remarks}

In this work, we have introduced two novel numerical methodologies: the Invariant Energy Convexification (IEC) and the Invariant Energy Functionalization (IEF) methods. These methods generalize the well-known Invariant Energy Quadratization (IEQ) method. The inherent ability of these proposed techniques to construct linear energy-stable numerical schemes, a crucial feature inherited from the IEQ method, underscores their significance. 

The numerical experiments on the Allen-Cahn and Cahn-Hilliard equations substantiated the theoretical properties deduced for both methods in practical implementations. The results conclusively demonstrated that both methods are robust and provide energy stability. Moreover, if an appropriate function is meticulously selected and carefully chooses the parameters in the method, the IEC method exhibits the potential to outperform the standard IEQ method. 

As we conclude this manuscript, we wish to underscore several notable points and delineate potential lanes for future research related to this work:

\begin{enumerate}
    \item In the extant version of the IEC method, the selection of uniformly L-smooth functions, such as the Softplus function, is imperative for designing the corresponding numerical scheme in theory. However, in our practical implementations, we observed that numerous functions, which are only locally L-smooth, can also facilitate the development of numerical schemes with a commendable performance. It could be attributed to the energy bound of the auxiliary variable, which may consequently result in a bound of the function's second derivative. The necessity for theoretical justification of this aspect needs to be considered.

    \item The accuracy of the numerical solutions is influenced by choice of the auxiliary function $c(x)$ and the parameter $\alpha$, especially when the time discretization is coarse. It is evident from our discussion in Section \ref{sec:Numerics}. Therefore, deriving a theoretical justification to find the optimal formulation for specific problems is challenging and immensely useful.

    \item Our numerical results reveal that both methods lead to convergence to the exact solution with a desirable convergence rate. However, contrary to the IEQ method, the convergence analysis of the IEC and the IEF methods may present formidable challenges due to the non-quadratic structure of their modified energy. It necessitates using specific analytical tools for the auxiliary functions used in their formulation—this facet of our work beckons further investigation.

    \item It is worth noting that the challenges and problems mentioned above are also relevant when considering extending these methods to the Scalar Auxiliary Variable (SAV) method.

\end{enumerate}

\bibliographystyle{abbrv}
\bibliography{reference}

\end{document}